\theoremstyle{definition}
\newtheorem{Def}{Definition}[section]
\newtheorem{Thm}[Def]{Theorem}
\newtheorem{Prp}[Def]{Proposition}
\newtheorem{Lem}[Def]{Lemma}
\newtheorem{Cor}[Def]{Corollary}
\newtheorem{Remark}[Def]{Remark}
\def\ip<#1>{\langle #1 \rangle}
\newcommand{\mb}[1]		{ \boldsymbol{#1} }
\newcommand{\comp}		{	\circ }
\newcommand{\eps}		{ \varepsilon }
\newcommand{\deebar} 	{ \overline{\partial} }
\newcommand{\fibp}[2]	{ \,{}_{#1}\!\!\times_{#2} }
\title{A Kobayashi-Hitchin correspondence between Dirac-type singular mini-holomorphic bundles and HE-monopoles}
\author{Masaki Yoshino}
\date{\today}
\begin{document}
	\maketitle
	\begin{abstract}
		We prove an analogue of the Kobayashi-Hitchin correspondence on compact connected $3$-folds that is fibered on orbifold Riemann surfaces and satisfy an integrability condition,
		which contains compact connected Sasakian $3$-folds.
		We define mini-holomorphic bundles on such $3$-folds and the algebraic Dirac-type singularities on mini-holomorphic bundles,
		and prove that there exists a special Hermitian metric (admissible BHE-metric) on a Dirac-type singular mini-holomorphic bundle if the bundle satisfies a slope stability.
	\end{abstract}
	\section{Introduction}
	On a connected compact K\"{a}hler manifold $(M,g)$ with the K\"{a}hler form $\omega$,
	a holomorphic vector bundle $V$ has a Hermite-Einstein metric if and only if $V$ is polystable,
	which is called the Kobayashi-Hitchin correspondence and proved by Uhlenbeck and Yau \cite{Ref:Uhl-Yau}.
	In this paper,
	we prove an analog of the Kobayashi-Hitchin correspondence on compact mini-holomorphic $3$-folds.
	
	We describe mini-holomorphic $3$-folds and mini-holomorphic vector bundles on them.
	In our context,
	they are the counterpart of complex manifolds and holomorphic bundles.
	Let $X$ be a compact oriented $3$-fold.
	Let $\partial_t$ be a nowhere vanishing vector field on $X$ and $\alpha$ a $\partial_t$-invariant $1$-form on $X$ with a condition $\ip<\partial_t,\alpha>=1$.
	Let $(\Sigma,g_{\Sigma})$ be an orbifold Riemann surface and $\pi:M\to\Sigma$ a $\partial_t$-invariant submersion.
	Set a metric $g=\alpha^2 + \pi^{\ast}g_{\Sigma}$.
	Assume that $\alpha\wedge\pi^{\ast}\mathrm{vol}_{\Sigma}$ is positive-oriented.
	We set $\Omega^{0,1}_X:= \underline{\mathbb{C}}\alpha\oplus\pi^{\ast}\Omega^{0,1}_{\Sigma}$ and $\Omega^{0,2}_X:=\bigwedge^2\Omega^{0,1}_X$,
	where $\underline{\mathbb{C}}\alpha$ is the subbundle spanned by $\alpha$.
	Then the tuple $(\partial_t,\alpha,\Sigma,\pi)$ is called a mini-holomorphic structure on $X$.
	We set a differential operator $\deebar_X(f):=\partial_t(f)\alpha + \tilde{\partial}_{\bar{z}}f\,\pi^{\ast}(d\bar{z})$,
	where $z$ is a holomorphic local chart of $\Sigma$ and $\tilde{\partial}_{\bar{z}}$ is the lift of $\partial_{\bar{z}}$ by the isomorphism $\mathrm{Ker}(\alpha)\simeq \pi^{\ast}T_{\mathbb{C}}\Sigma$.
	If a vector bundle $V$ on an open subset $U\subset X$ and a differential operator $\deebar_V:\Omega^{0,i}_X(V)\to\Omega^{0,i}_X(V)$ satisfies the Leibniz $\deebar_V(fs)=\deebar_X(f)\wedge s+f\deebar_V(s)\;(f\in C^{\infty}(U), s\in \Omega^{0,i}_X(V))$ and the integrability condition $\deebar_V\comp\deebar_V=0$,
	we call the tuple $(V,\deebar_V)$ a mini-holomorphic bundle on $U$.
	For a finite subset $Z\subset X$ and a mini-holomorphic bundle $(V,\deebar_V)$ on $X\setminus Z$,
	$(V,\deebar_V)$ is a Dirac-type singular mini-holomorphic bundle on $(X,Z)$ if it satisfies a certain condition (See Definition \ref{Def:alg. Dirac-type sing.}.).
	We describe HE-monopoles on mini-holomorphic $3$-folds.
	Let $(V,h,A)$ be a Hermitian vector bundle on an open subset $U\subset X$ with a connection.
	Let $\Phi$ be a skew-Hermitian endomorphism of $V$.
	The tuple $(V,h,A,\Phi)$ is HE-monopole of factor $c\in\mathbb{R}$ if it satisfies the Bogomolny-Hermite-Einstein equation $F(A) = \ast \nabla_A(\Phi) + \sqrt{-1}c\cdot(\pi^{\ast}\omega_{\Sigma})\mathrm{Id}_{V}$,
	where $\omega_{\Sigma}$ is the K\"{a}hler form of $\Sigma$.
	If the constant $c=0$,
	the Bogomolny-Hermite-Einstein equation agrees with the ordinary Bogomolny equation.
	A HE-monopole $(V,h,A,\Phi)$ on $X\setminus Z$ is a Dirac-type singular monopole if it satisfies a certain condition (See Definition \ref{Def:HE-monopole}.).
	A HE-monopole $(V,h,A,\Phi)$ has a natural mini-holomorphic structure $\deebar_V := \nabla_A^{0,1} - \sqrt{-1}\Phi\cdot\alpha$,
	and underlying mini-holomorphic bundles of Dirac-type singular HE-monopoles are Dirac-type singular.
	Conversely,
	for a mini-holomorphic bundle $(E,\deebar_E)$ and a Hermitian metric $h$ on $E$,
	there uniquely exist a connection $A_h$ and a skew-Hermitian endomorphism $\Phi_h$ on $E$ such that $\deebar_E = \nabla_{A_h}^{0,1} - \sqrt{-1}\Phi_h\cdot\alpha$.
	If the tuple $(E,h,A_h,\Phi_h)$ is a HE-monopole,
	we call $h$ a Bogomolny-Hermite-Einstein metric (or shortly BHE-metric).
	Moreover,
	if the tuple $(E,h,A_h,\Phi_h)$ can be lifted by the Hopf-fibration,
	then $h$ is called an admissible metric (See Definition \ref{Def:adm. met. on mini-hol.}.).
	A metric $h$ is an admissible BHE-metric if and only if the tuple $(E,h,A_h,\Phi_h)$ is a Dirac-type singular HE-monopole (See Proposition \ref{Prp:char. of adm.}.).
	We introduce the stability of Dirac-type singular mini-holomorphic bundles.
	We set the degree of $(E,\deebar_E)$ to be $\mathrm{deg}(E,\deebar_E):=\int_{X\setminus Z}\alpha\wedge c_1(A_{h_0})$,
	where $h_0$ is an admissible metric.
	By Proposition \ref{Prp:well-definedness of degree}, $\mathrm{deg}(E,\deebar_E)$ is independent of the choice of $h_0$, and hence it is well-defined.
	The Dirac-type singular mini-holomorphic bundle $(E,\deebar_E)$ is stable if the inequality 	$\mathrm{deg}(E,\deebar_E)/\mathrm{rank}(E) > \mathrm{deg}(F,\deebar_F)/\mathrm{rank}(F)$ holds for any proper mini-holomorphic subbundle $(F,\deebar_F)$ of $(E,\deebar_E)$.
	Our main result is the following:
	\begin{Thm}[Theorem \ref{Thm:ex. of BHE-met.}]
		If $(E,\deebar_E)$ is stable,
		then there exists an admissible BHE-metric $h$ on $E$.
	\end{Thm}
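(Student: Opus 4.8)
The plan is to adapt the Uhlenbeck--Yau continuity method, in the analytic form developed by Simpson and Mochizuki for singular and non-compact situations, to the mini-holomorphic setting. First I would rewrite the Bogomolny--Hermite--Einstein equation as a single mean-curvature equation for the metric alone. Fixing the admissible reference metric $h_0$ and writing a competitor as $h=h_0 e^{s}$ with $s$ an $h_0$-self-adjoint endomorphism of $E$, the connection $A_h$ and the field $\Phi_h$ depend on $s$, and contracting the BHE equation with $\pi^{\ast}\omega_{\Sigma}$ produces a second-order elliptic operator $\mathcal{K}(s):=\sqrt{-1}\Lambda\bigl(F(A_h)-\ast\nabla_{A_h}\Phi_h\bigr)$ in $s$ (the full Laplacian on $X$, combining the base Laplacian from $F(A_h)$ with the $\partial_t$-direction second derivative from $\ast\nabla_{A_h}\Phi_h$); the BHE condition then becomes $\mathcal{K}(s)=c\,\mathrm{Id}_E$. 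Integrating the trace of this equation against $\alpha\wedge\pi^{\ast}\mathrm{vol}_{\Sigma}$ and using the degree identity $\mathrm{deg}(E,\deebar_E)=\int_{X\setminus Z}\alpha\wedge c_1(A_{h_0})$ fixes the constant $c$ to be the slope $\mathrm{deg}(E,\deebar_E)/\mathrm{rank}(E)$ up to the normalizing volume factor, so that the trace part of the equation is consistent.

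For the continuity method I would introduce the perturbed equation $\mathcal{K}(s)-c\,\mathrm{Id}_E=\epsilon\,s$ for $\epsilon\in(0,1]$ (equivalently run Donaldson's heat flow): existence near the easy end $\epsilon=1$ and openness follow from the implicit function theorem on weighted H\"older/Sobolev spaces adapted to the Dirac-type singularities at $Z$, the weights being dictated by the explicit local model of an admissible metric near each point of $Z$. The decisive analytic input is a uniform $C^0$-bound on $s=s_\epsilon$ independent of $\epsilon$: applying the maximum principle to $\mathrm{tr}(s)$ together with a Bochner/Kato inequality $\Delta|s|\le|\mathcal{K}(s)-\mathcal{K}(0)|$ reduces sup-norm control to an $L^1$-bound, which is in turn governed by the boundedness of a Donaldson-type functional $M(h_0,h)$ along the family. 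Near $Z$ the admissible local model must guarantee that the singular contributions to $M$ are finite and that no mass escapes into the punctures.

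With the uniform $C^0$-bound in hand, elliptic bootstrapping gives higher-order bounds on compact subsets of $X\setminus Z$, and a weak-limit argument produces a solution of $\mathcal{K}(s)=c\,\mathrm{Id}_E$ as $\epsilon\to0$; its $\partial_t$-invariance follows because both $h_0$ and the equation are $\partial_t$-invariant, so uniqueness forces $s$ to be $\partial_t$-invariant, whence $h=h_0 e^{s}$ lifts through the Hopf fibration and is admissible, and the characterization of admissible BHE-metrics then yields the theorem. The alternative is $\sup|s_\epsilon|\to\infty$; after rescaling $s_\epsilon/\|s_\epsilon\|$, the Uhlenbeck--Yau limiting procedure produces a nonzero $L^2_1$ endomorphism $\pi_\infty$ that is a projection almost everywhere and is weakly $\deebar_E$-parallel, hence, by a mini-holomorphic regularity statement for weakly holomorphic subbundles, defines a proper mini-holomorphic subsheaf $(F,\deebar_F)\subset(E,\deebar_E)$ with $\mathrm{deg}(F,\deebar_F)/\mathrm{rank}(F)\ge\mathrm{deg}(E,\deebar_E)/\mathrm{rank}(E)$, contradicting stability.

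I expect the main obstacle to be precisely this destabilization step in the singular mini-holomorphic category, and two points need care. First, the limiting projection must be shown to be genuinely mini-holomorphic and to carry the admissible singular behaviour at $Z$, so that its degree is still computed by $\int\alpha\wedge c_1$; controlling the boundary contributions of $c_1$ at the Dirac-type singularities, a transgression term localized at $Z$, is where the Dirac-type structure must be used essentially, and getting the sign of these contributions right is what makes the slope inequality genuinely contradict stability. Second, the weighted analysis near $Z$ must be arranged so that both the $C^0$-estimate and the weak-limit construction remain valid up to the punctures; this is the technical heart and should be handled by comparison with the explicit admissible local models, along the lines of Mochizuki's treatment of Dirac-type monopole singularities.
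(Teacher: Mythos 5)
Your proposal takes a genuinely different route from the paper, and in its present form it is a program with real gaps rather than a proof. The paper never runs the Uhlenbeck--Yau/Simpson machinery in the mini-holomorphic category at all: it forms the flat lift $\tilde{E}:=p^{\ast}E$ on $M=S^1_{\theta}\times X$, shows $(M,J,g_M)$ is Gauduchon (Proposition \ref{Prp:Gauchonness of prod.}), verifies Simpson's non-compactness assumptions (I)--(III) for $M':=M\setminus(S^1\times Z)$ (Proposition \ref{Prp:Assump in Zhen^3}), and then quotes the Kobayashi--Hitchin theorem of Zhang--Zhang--Zhang for non-compact Gauduchon manifolds in its $S^1$-equivariant form; the resulting $S^1$-invariant HE-metric is mutually bounded with $p^{\ast}h_0$, descends to a BHE-metric on $E$, and is admissible by Proposition \ref{Prp:char. of adm.}. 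The whole point of this detour is to outsource exactly the analysis you propose to redo from scratch, so that it takes place in honest complex geometry on a four-manifold, where the needed theorems exist.

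The decisive gap is the step you yourself flag: the destabilization argument. You need a regularity theorem asserting that the weak $L^2_1$ limit $\pi_\infty$ with $(\mathrm{Id}-\pi_\infty)\deebar_E\pi_\infty=0$ almost everywhere defines a coherent mini-holomorphic subobject of $(E,\deebar_E)$ on the punctured $3$-fold. No such theorem exists in this category --- the paper does not even define mini-holomorphic subsheaves --- and you do not supply one. Moreover, even granted such an object, the stability hypothesis (Definition \ref{Def:adm. met. on mini-hol.}) quantifies only over mini-holomorphic \emph{subbundles}, and the degree entering it is computed from an \emph{admissible} metric via $\int\alpha\wedge c_1$; so you must additionally show that the destabilizer is (or can be replaced by) a genuine subbundle with admissible behaviour at $Z$, or else the slope inequality you extract contradicts nothing. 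In the paper this is precisely the role of the observation that an $S^1$-invariant saturated subsheaf of $\tilde{E}$ on the complex surface $M$ is automatically a subbundle (its non-subbundle locus is a finite $S^1$-invariant set, hence empty since the action is free) and therefore descends to a mini-holomorphic subbundle of $E$; your direct approach on $X$ has no substitute for this mechanism. Two secondary gaps: the uniform $C^0$-estimate from $L^1$ requires Simpson-type hypotheses on the non-compact space (finite volume, an exhaustion function with bounded Laplacian, and the $\sup\leq C\,a(\|\cdot\|_{L^1})$ inequality), which the paper proves for $M'$ in Proposition \ref{Prp:Assump in Zhen^3} and which you never verify for $X\setminus Z$; and your admissibility argument via ``$\partial_t$-invariance'' is incoherent --- no group acts on $X$, metrics on $E$ are not invariant under anything, and admissibility is a $C^1$-extension property of $p_q^{\ast}h$ across the origin of the Hopf lift, not an invariance property. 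That last step is repairable (mutual boundedness with $h_0$ plus the monopole equation near $Z$ yields admissibility via Lemma \ref{Lem:Removable singularity of HE-metric}, Theorem \ref{Thm:Pauly's criterion} and Proposition \ref{Prp:char. of adm.}), but only once the uniform $C^0$-bound has actually been established.
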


	In Section 2,
	we recall the notations necessary for Section 3.
	Moreover,
	in Proposition \ref{Prp:A criterion for Dirac-typeness} we give a slight generalization of Theorem 4.5 in \cite{Ref:Moc-Yos}.
	In Section 3,
	we prove our main result.
	
	\subsection*{Comparison with previous studies}
		In \cite{Ref:Cha-Hur},
		Charbonneau and Hurtubise introduced the notion of HE-monopoles and mini-holomorphic bundles on a product of $S^1$ and a Riemann surface $\Sigma$.
		They also proved the Kobayashi-Hitchin correspondence on $S^1\times \Sigma$.
		
		In \cite{Ref:Bis-Hur},
		Biswas and Hurtubise considered the Kobayashi-Hitchin correspondence on compact Sasakian $3$-folds,
		and proved that two Dirac-type singular monopole on a compact Sasakian $3$-folds are isomorphic as monopoles if their underlying mini-holomorphic structures are isomorphic.
		
		In \cite{Ref:Bar-Hek},
		Baraglia and Hekmati  constructed the Kobayashi-Hitchin correspondence for compact oriented taut Riemannian foliated manifolds with transverse Hermitian structure.
		This result seems to be considered as a higher-dimensional generalization of our result under the non-singular condition.
	\subsection*{Acknowledgments}
		I deeply thank my supervisor Takuro Mochizuki for his kind advices and discussions.
		I also thank Yoshinori Hashimoto for teaching me the result of \cite{Ref:Bar-Hek}.

	\section{Preliminaries}
	\subsection{K\"{a}hler orbifolds}
		We recall the notion of orbifold by following \cite{Ref:Moe}.
		For a Lie groupoid $\mathcal{C}$,
		we denote by $\mathcal{C}_0$ and $\mathcal{C}_1$ the object space and the morphism space of $\mathcal{C}$.
		Let $s,t,m$ and $u$ be the source map, the target map, the composition map and the unit map.
		We set $\mathcal{C}(x,y):=(s,t)^{-1}(x,y)$.
		We also set $\mathcal{C}_2 := \mathcal{C}_{1}\fibp{s}{t} \mathcal{C}_{1}$, and denote by $p_i:\mathcal{C}_2 \to \mathcal{C}_1$ theprojection to the $i$-th component.
		
		\begin{Def}
			Let $\mathcal{C}$ be a Lie groupoid unless otherwise denoted.
			\begin{enumerate}[label=(\roman*)]
				\item 
					The groupoid $\mathcal{C}$ is called an orbifold if the following conditions are satisfied.
					\begin{itemize}
						\item
							The maps $s$ and $t$ are local diffeomorphisms.
						\item 
							The map $(s,t):\mathcal{C}_1\to \mathcal{C}_0\times \mathcal{C}_0$ is proper.
					\end{itemize}
					For an orbifold $\mathcal{C}$, we define the dimension of $\mathcal{C}$ as the one of $\mathcal{C}_0$.
				\item
					We denote by $|\mathcal{C}|$ the underlying topological space $\mathcal{C}_0/\sim$,
					where $x\sim y$ holds if $\mathcal{C}(x,y)\neq\emptyset$.
				\item 
					A vector bundle on $\mathcal{C}$ is a vector bundle $V$ on $\mathcal{C}_0$ equipped with an isomorphism $\phi:t^{\ast}V\to s^{\ast}V$ that satisfies the following commutative diagram:
					\[
					\begin{CD}
						p_2^{\ast}t^{\ast}V @>p_2^{\ast}\phi>> p_2^{\ast}s^{\ast}V @= p_1^{\ast}t^{\ast}V \\
						@| @. @VVp_1^{\ast}\phi V \\
						m^{\ast}t^{\ast}V @>m^{\ast}\phi>>m^{\ast}s^{\ast}V @= p_1^{\ast}s^{\ast}V 
					\end{CD}
					\] 
					The tangent and cotangent bundles on $\mathcal{C}_0$ naturally satisfy the above condition.
					In particular,
					a Riemannian metrics on $\mathcal{C}$ is a $\mathcal{C}_1$-invariant Riemannian metric on $\mathcal{C}_0$.
				\item
					An orbifold $\mathcal{C}$ is a complex orbifold if both $\mathcal{C}_0$ and $\mathcal{C}_1$ have complex structures such that $s$, $t$, $m$ and $u$ are holomorphic.
					Moreover,
					a complex orbifold $\mathcal{C}$ is called a K\"{a}hler orbifold if $\mathcal{C}$ equips a Riemannian metric $g$ such that $(\mathcal{C}_0,g)$ is K\"{a}hler.
				\item
					Let $M$ be a manifold and $\mathcal{C}$ an orbifold.
					A smooth map $\varphi$ from $M$ to $\mathcal{C}$ is a collection of an open covering $\{U_i\}_{i\in I}$ of $M$ and smooth maps $\varphi_i:U_i\to \mathcal{C}_0$ and $\varphi_{ij}:U_i\cap U_j \to \mathcal{C}_1$ that satisfies $(s,t)\comp \varphi_{ij} = (\varphi_j,\varphi_i)$ and $\varphi_{ij}\comp \varphi_{jk} = \varphi_{ik}$ for any $i,j,k\in I$.
					Moreover,
					a smooth map $\varphi:M\to\mathcal{C}$ is said to be a submersion if $\varphi_i$ is a submersion for any $i\in I$.
			\end{enumerate}
		\end{Def}
		For a preparation of following subsections,
		we show the following lemma.
		\begin{Lem}
			Let $f:M\to\mathcal{C}$ be a smooth map from a manifold $M$ to an orbifold $\mathcal{C}$, and $\{U_i\}_{i\in I}$ the associated open covering.
			Let $V$ be a vector bundle on $\mathcal{C}$.
			Then we have the isomorphisms $f_j^{\ast}V|_{U_i\cap U_j} \simeq f_i^{\ast}V|_{U_i\cap U_j}$ that satisfies the cocycle condition. 
		\end{Lem}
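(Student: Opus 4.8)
The plan is to obtain the transition isomorphisms by pulling back the structure morphism $\phi\colon t^{\ast}V\to s^{\ast}V$ along the maps $f_{ij}$, and to read off the cocycle condition from the commutative diagram that defines a vector bundle on $\mathcal{C}$. No analysis is involved; the whole statement is a formal consequence of the functoriality of pullbacks.

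First I would record the two identities that make the construction well typed: since $(s,t)\comp f_{ij}=(f_j,f_i)$ on $U_i\cap U_j$, we have $s\comp f_{ij}=f_j$ and $t\comp f_{ij}=f_i$. Hence pulling back $\phi$ along $f_{ij}$ yields an isomorphism
\[
	f_{ij}^{\ast}\phi\colon f_{ij}^{\ast}t^{\ast}V=f_i^{\ast}V|_{U_i\cap U_j}\longrightarrow f_{ij}^{\ast}s^{\ast}V=f_j^{\ast}V|_{U_i\cap U_j},
\]
which is invertible because $\phi$ is. I would then set $\psi_{ij}:=(f_{ij}^{\ast}\phi)^{-1}=f_{ij}^{\ast}(\phi^{-1})$, an isomorphism $f_j^{\ast}V|_{U_i\cap U_j}\to f_i^{\ast}V|_{U_i\cap U_j}$ of the required direction.

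The heart of the matter is the cocycle identity $\psi_{ij}\comp\psi_{jk}=\psi_{ik}$ on $U_i\cap U_j\cap U_k$. The key point is that the two transition maps assemble into a smooth map $U_i\cap U_j\cap U_k\to\mathcal{C}_2$: the relevant source and target values agree because $s\comp f_{ij}=f_j=t\comp f_{jk}$, so the resulting pair is composable. Composing this map with the projections $p_1,p_2$ returns the individual transition maps, while composing it with the multiplication $m$ returns $f_{ik}$ by the assumed relation $f_{ij}\comp f_{jk}=f_{ik}$. Pulling the defining commutative diagram back along this map therefore turns the equality $m^{\ast}\phi=(p_1^{\ast}\phi)\comp(p_2^{\ast}\phi)$ into a relation among the $f_{\bullet}^{\ast}\phi$ over $U_i\cap U_j\cap U_k$; taking inverses of both sides converts it into $\psi_{ij}\comp\psi_{jk}=\psi_{ik}$. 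The normalization $\psi_{ii}=\mathrm{id}$ follows by specializing the relation to $i=j=k$: then $f_{ii}\comp f_{ii}=f_{ii}$ forces $f_{ii}$ to be the unit section $u\comp f_i$, and the diagram shows that $\phi$ restricts to the identity along units.

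The argument is essentially bookkeeping, so the only place demanding care is tracking fibers and orientations: one must check that each pullback lands in the fiber dictated by the source/target identities, and that the order of composition is reversed correctly when passing from $f_{\bullet}^{\ast}\phi$ to $\psi_{\bullet}=(f_{\bullet}^{\ast}\phi)^{-1}$. Once these identifications are fixed, gluing the local bundles $f_i^{\ast}V$ along the $\psi_{ij}$ produces the pullback bundle $f^{\ast}V$ on $M$, which is the eventual purpose of the lemma.
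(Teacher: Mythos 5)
Your proof is correct and is exactly the argument the paper has in mind: the paper dismisses the lemma as ``obvious from the definition of vector bundles on a Lie groupoid,'' and what you have written is precisely the spelled-out version — transition maps $\psi_{ij}=(f_{ij}^{\ast}\phi)^{-1}$ obtained from the structure isomorphism via $s\comp f_{ij}=f_j$, $t\comp f_{ij}=f_i$, with the cocycle identity coming from pulling back the multiplicativity diagram for $\phi$ along the map $(f_{jk},f_{ij})$ into $\mathcal{C}_2$ and using $f_{ij}\comp f_{jk}=f_{ik}$. No gaps; your care with the composition order and the unit case only makes explicit what the paper leaves implicit.
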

		\begin{proof}
			Obvious from the definition of vector bundles on a Lie groupoid.
		\end{proof}
		\begin{Def}
			Let $f:M\to\mathcal{C}$ and $V$ be as in the above lemma.
			We define the pullback $f^{\ast}V$ as the gluing of each pullback $f_i^{\ast}V$.
		\end{Def}
		\begin{Remark}
			The pullback of a Riemannian metric is defined in a similar way. 
		\end{Remark}
	\subsection{The boundary value problem of HE-metrics on trivial holomorphic bundles}
		Let $(M,g)$ be a compact connected Hermitian manifold of dimension $n$ with boundary $\partial M$ and $\omega$ the fundamental form.
		Let $\Lambda_\omega:\Omega^{i}_M \to \Omega^{i-2}_M$ be the contraction by $\omega$.
		We assume that $g$ is of $L^p_2$-class for some $p\gg 2n$.
		We denote by $\partial$ and $\deebar$ the $(1,0)$ and $(0,1)$-part of the exterior derivative.
		\begin{Def}
			Let $(V,\deebar_V)$ be a holomorphic bundle on $M$.
			A Hermitian metric $h$ on $(V,\deebar_V)$ is Hermite-Einstein of factor $f\in L^1(M)$ if the Chern connection $A$ of $(V,\deebar_{V},h)$ satisfies the Hermite-Einstein equation $\Lambda_{\omega}F(A) = \sqrt{-1}f\,\mathrm{Id}_V$.
		\end{Def}
		For a holomorphic line bundle $(L,\deebar_L)$ on $M$,
		a Hermitian metric $h$ on $L$ is Hermite-Einstein of factor $f\in L^1(M)$ if and only if we have $\tilde{\Delta}(\log(h))=-f$,
		where $\tilde{\Delta}:=\sqrt{-1}\Lambda_{\omega}\deebar\partial$ is the complex Laplacian. 
		\begin{Lem}\label{Lem:CpxLap}
			For any $s\in L^p(M)$ there uniquely exists $f\in L^p_2(M)$ such that $\tilde{\Delta}(f)=s$ and  $f|_{\partial \Omega}=0$.
		\end{Lem}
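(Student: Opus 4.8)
The plan is to recognize that, acting on functions, the complex Laplacian $\tilde{\Delta}=\sqrt{-1}\Lambda_{\omega}\deebar\partial$ is an honest scalar second-order elliptic operator, so that the assertion is precisely the unique solvability of the Dirichlet problem $\tilde{\Delta}f=s$, $f|_{\partial M}=0$. This I would deduce from the classical $L^p$-theory for such problems.

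First I would compute $\tilde{\Delta}$ in a holomorphic chart. Writing $\omega=\sqrt{-1}\,g_{j\bar{k}}\,dz^j\wedge d\bar{z}^k$, using that $\deebar\partial f=-\,\partial_j\partial_{\bar{k}}f\,dz^j\wedge d\bar{z}^k$, and using the fact that $\Lambda_{\omega}$ is an algebraic (zeroth-order) contraction, one finds
\[
\tilde{\Delta}f \;=\; -\,g^{j\bar{k}}\,\partial_j\partial_{\bar{k}}f ,
\]
a pure second-order operator with leading coefficients $g^{j\bar{k}}$ and no first- or zeroth-order terms. In particular $-\tilde{\Delta}$ has the positive-definite principal symbol $g^{j\bar{k}}\xi_j\bar{\xi}_k$, so $\tilde{\Delta}$ is uniformly elliptic, and it annihilates constants. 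I would then record the regularity of the coefficients: since $p\gg 2n=\dim_{\mathbb{R}}M$, the Sobolev embedding $L^p_2\hookrightarrow C^{1,\gamma}$ with $\gamma=1-2n/p>0$ shows that $g$, and hence $g^{j\bar{k}}$, is Hölder continuous, which is exactly the regularity of the leading coefficients required by the scalar $L^p$-theory.

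With this structure in place, uniqueness follows from the weak maximum principle applied to the elliptic operator $-\tilde{\Delta}$, whose zeroth-order coefficient is $0\le 0$: if $\tilde{\Delta}f=0$ and $f|_{\partial M}=0$, then the maximum principle gives $\sup_M f\le\sup_{\partial M}f=0$, and applying it to $-f$ gives $\inf_M f\ge 0$, whence $f\equiv 0$. For existence I would invoke the standard $L^p$ solvability of the Dirichlet problem for a uniformly elliptic operator in non-divergence form with continuous leading coefficients and nonpositive zeroth-order coefficient (Calder\'on--Zygmund / Agmon--Douglis--Nirenberg theory, e.g. Gilbarg--Trudinger, Ch.~9): for each $s\in L^p(M)$ there is a unique $f\in L^p_2(M)$ with $\tilde{\Delta}f=s$ and $f|_{\partial M}=0$. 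On the manifold-with-boundary $M$ the global statement is obtained by localizing with a finite atlas and a partition of unity and patching the interior and boundary $W^{2,p}$-estimates, uniqueness of the patched solution again being supplied by the maximum-principle argument above.

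The main obstacle is not conceptual but lies in matching hypotheses: one must check that the coefficient regularity furnished by $p\gg 2n$ genuinely meets the continuity requirement on the leading coefficients in the $L^p$ Dirichlet theory --- this is exactly what $L^p_2\hookrightarrow C^0$ provides --- and one must be careful with the sign convention so that the operator to which the maximum principle is applied has nonpositive zeroth-order term. Once ellipticity, the vanishing of the lower-order terms, and the Hölder continuity of $g^{j\bar{k}}$ are in hand, existence, uniqueness, and the $L^p_2$-regularity all follow from the cited scalar theory.
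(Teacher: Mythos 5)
Your proposal is correct in substance, but it takes a genuinely different route from the paper. The paper never computes $\tilde{\Delta}$ in coordinates: it introduces the auxiliary operator $\hat{\Delta}:=\partial^{\ast_g}\partial$, observes that the Dirichlet operator $\hat{S}(f)=(\hat{\Delta}f,\,f|_{\partial M})$ is an isomorphism by a Lax--Milgram (variational) argument, and then notes that $S-\hat{S}$ is a first-order operator with $C^0$ coefficients (here $g\in L^p_2\subset C^1$ enters), hence a compact perturbation, so that $S$ is Fredholm of index $0$; injectivity via the maximum principle then upgrades this to an isomorphism. You instead work directly with $\tilde{\Delta}$, using the structural fact --- true, and not used in the paper --- that the Chern Laplacian on functions is a pure non-divergence second-order operator $-g^{j\bar{k}}\partial_j\partial_{\bar{k}}$ with no lower-order terms, whose leading coefficients are H\"older continuous because $L^p_2\hookrightarrow C^{1,\gamma}$ for $p\gg 2n$; existence then comes from the classical Calder\'on--Zygmund/Agmon--Douglis--Nirenberg Dirichlet theory, and uniqueness from the same maximum principle the paper uses. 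Your route is more concrete and makes the regularity bookkeeping transparent; the paper's route is more functional-analytic and works on the manifold in one stroke, without coordinates or localization.

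That last point is where your write-up is too loose, and it is the only real soft spot. Localizing with a finite atlas and patching interior and boundary $W^{2,p}$-estimates yields the global a priori estimate $\|f\|_{L^p_2}\le C\left(\|\tilde{\Delta}f\|_{L^p}+\|f\|_{L^p}\right)$, which together with uniqueness shows that $S$ is injective with closed range --- but it does not produce a solution: the local solutions furnished by Gilbarg--Trudinger on coordinate charts cannot be glued, since they disagree on overlaps. Surjectivity requires a further global argument, e.g.\ a method of continuity joining $\tilde{\Delta}$ to a reference operator whose Dirichlet problem on $M$ is already known to be solvable (such as the Laplace--Beltrami operator of a smooth metric, solvable variationally), or an index-zero Fredholm argument. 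This is a standard repair, but it is exactly the point where some global input is unavoidable, and it is precisely what the paper's comparison of $S$ with the Lax--Milgram-invertible $\hat{S}$ accomplishes.
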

		\begin{proof}
			Set the operator $\hat{\Delta} := \partial^{\ast_g}\partial$,
			where $\partial^{\ast_g}$ is the adjoint of $\partial$ with respect to $g$.
			Then the operator $\hat{S}:L^p_2(M) \ni f \mapsto (\hat{\Delta}(f),f|_{\partial M})\in L^p(M)\oplus L^{p}_{2-(1/p)}(\partial \Omega)$ is an isomorphism by the Lax-Milgram argument.
			We set another operator $S:L^p_2(M) \ni f \mapsto (\tilde{\Delta}(f),f|_{\partial M})\in L^p(M)\oplus L^p_{2-(1/p)}(\partial \Omega)$.
			Then the difference $S-\hat{S}$ is a first-order differential operator with $C^0$-coefficient,
			and hence $S$ is a Fredholm operator of index $0$.
			Here we have $\mathrm{Ker}(S)=\{0\}$ by the maximum principle.
			Therefore $S$ is an isomorphism,
			which completes the proof.
		\end{proof}
		\begin{Remark}
			The non-integer order Sobolev space $L^p_k$ means the Sobolev-Slobodeckji space $W^{k,p}$, or equivalently the Besov space $B^k_{p,p}$.
		\end{Remark}
		Let $V\simeq \mathbb{C}^r \times M$ be a trivial holomorphic bundle on $M$.
		By following \cite{Ref:Don},
		we solve the Dirichlet problem of HE-metrics on $V$.
		\begin{Prp}\label{Prp:Bdd prblm of HE-metric}
			For any smooth Hermitian metric $\tilde{h}$ on $V|_{\partial M}$ and a real-valued function $f\in L^p(M)$,
			there exists a unique Hermitian metric $h$ on $V$ such that $h$ is a Hermite-Einstein metric of factor $f\in L^p(M)$ and satisfies $h|_{\partial M}=\tilde{h}$.  
		\end{Prp}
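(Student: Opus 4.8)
The plan is to follow Donaldson and reduce the boundary value problem to a nonlinear elliptic Dirichlet problem for a single endomorphism. First I would fix a smooth Hermitian metric $h_0$ on $V$ with $h_0|_{\partial M}=\tilde h$, which is possible since $V$ is trivial. Every Hermitian metric is then $h=h_0 H$ for a unique $h_0$-self-adjoint positive endomorphism $H$, and the boundary condition becomes $H|_{\partial M}=\mathrm{Id}$. Using the curvature formula $F(A_h)=F(A_{h_0})+\deebar(H^{-1}\partial_{A_{h_0}}H)$, the Hermite--Einstein equation $\Lambda_\omega F(A_h)=\sqrt{-1}f\,\mathrm{Id}$ becomes
\[
\sqrt{-1}\Lambda_\omega\deebar\bigl(H^{-1}\partial_{A_{h_0}}H\bigr)=\Psi,\qquad \Psi:=-f\,\mathrm{Id}-\sqrt{-1}\Lambda_\omega F(A_{h_0})\in L^p(\mathrm{End}(V)).
\]
So it suffices to find a positive self-adjoint $H\in L^p_2$ with $H|_{\partial M}=\mathrm{Id}$ solving this equation.

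I would solve it by the continuity method along $\Psi_\tau:=\tau\Psi$, $\tau\in[0,1]$. At $\tau=0$ the solution is $H\equiv\mathrm{Id}$, so the solution set $S\subset[0,1]$ is nonempty. For openness I linearize: writing $H=e^s$ with $s$ self-adjoint, the principal part of the linearized operator is $\tilde\Delta$ acting componentwise on self-adjoint endomorphisms, with Dirichlet condition $s|_{\partial M}=0$. The endomorphism-valued analog of Lemma \ref{Lem:CpxLap}, proved by the same Lax--Milgram and Fredholm argument with injectivity coming from the maximum principle, shows this operator is an isomorphism onto $L^p$, so the implicit function theorem makes $S$ open.

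Closedness requires uniform a priori estimates on a solution $H_\tau$, and this is the heart of the matter. The crucial step is a $C^0$ bound keeping $H_\tau$ uniformly positive, with eigenvalues bounded away from $0$ and $\infty$. Taking the trace of the equation and using $\mathrm{tr}(H^{-1}\partial H)=\partial\log\det H$ gives the scalar identity $\tilde\Delta\log\det H_\tau=\mathrm{tr}(\Psi_\tau)$ with $\log\det H_\tau|_{\partial M}=0$, so Lemma \ref{Lem:CpxLap} controls $\log\det H_\tau$ uniformly in $\tau$. To control the individual eigenvalues I would derive, following Donaldson and Simpson, a differential inequality bounding $\mathrm{tr}(H_\tau)$ (and $\mathrm{tr}(H_\tau^{-1})$) in terms of $|\Psi_\tau|$, and apply the maximum principle; the fixed boundary values $\mathrm{tr}(H_\tau)|_{\partial M}=\mathrm{tr}(H_\tau^{-1})|_{\partial M}=r$ together with the determinant bound then pin down the eigenvalue estimates. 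I expect this positivity estimate to be the main obstacle: in the closed case it is exactly where stability enters, whereas here the prescribed Dirichlet data and the maximum principle substitute for it. Once positivity holds the equation is uniformly elliptic, and elliptic $L^p$ estimates for the Dirichlet problem bootstrap $H_\tau$ to a uniform $L^p_2$ bound, so $S$ is closed and existence follows.

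Finally, for uniqueness I would take two Hermite--Einstein metrics $h,h'$ of the same factor $f$ with $h|_{\partial M}=h'|_{\partial M}$, set $H:=h^{-1}h'$ (self-adjoint and positive with respect to $h$, with $H|_{\partial M}=\mathrm{Id}$), and subtract the two equations to get $\sqrt{-1}\Lambda_\omega\deebar_h(H^{-1}\partial_h H)=0$. The classical computation then shows that the nonnegative function $\mathrm{tr}(H)+\mathrm{tr}(H^{-1})-2r$ is $\tilde\Delta$-subharmonic; since it vanishes on $\partial M$, the maximum principle forces it to vanish identically, whence $H=\mathrm{Id}$ and $h=h'$.
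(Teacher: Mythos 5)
Your overall architecture---reduction to an equation for a positive self-adjoint endomorphism, continuity method, maximum-principle $C^0$ estimates, and the Donaldson-distance uniqueness argument---parallels the paper's proof, with one harmless difference of route: the paper runs the continuity method along a path of \emph{boundary data} $\tilde{h}_t$ joining the trivial metric to $\tilde{h}$, whereas you fix the Dirichlet data at $\mathrm{Id}$ and deform the inhomogeneous term $\tau\Psi$. Either path is legitimate, and your openness and uniqueness steps are essentially identical to the paper's (the endomorphism-valued analogue of Lemma \ref{Lem:CpxLap} for openness, the subharmonicity of $\mathrm{Tr}(H)+\mathrm{Tr}(H^{-1})-2r$ for uniqueness).

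The gap is in closedness, and you have mislocated the difficulty. The positivity estimate you call ``the main obstacle'' is in fact the easy part: a Simpson-type inequality $\tilde{\Delta}\log\mathrm{Tr}(H_\tau)\leq |\Lambda_\omega F(A_{h_0})| + |f|$ together with Lemma \ref{Lem:CpxLap} and the maximum principle gives uniform two-sided eigenvalue bounds, just as the Donaldson distance does in the paper. What does \emph{not} follow is your final assertion that ``once positivity holds, elliptic $L^p$ estimates bootstrap $H_\tau$ to a uniform $L^p_2$ bound.'' The equation for $H$ is quasilinear: expanding $\deebar\bigl(H^{-1}\partial_{A_{h_0}}H\bigr)$ produces the term $-H^{-1}\deebar H\, H^{-1}\wedge\partial_{A_{h_0}} H$, which is \emph{quadratic in $dH$}. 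For elliptic systems with such natural-growth nonlinearity, uniform ellipticity plus $C^0$ bounds do not yield gradient bounds from linear elliptic theory (harmonic maps are the standard counterexample), so the passage from the $C^0$ estimate to any $C^1$ or $L^p_2$ estimate requires a separate mechanism. This is exactly where the paper's proof does its real work: assuming $\|h_i\|_{C^1}\to\infty$, it rescales around points $p_i$ of maximal derivative to obtain Hermite--Einstein metrics $K_i$ on balls (or half-balls when $p_i$ approaches $\partial M$) that converge in $C^0$ to a constant metric while $|dK_i|(0)=1$, a contradiction. Alternatively one can invoke an Uhlenbeck--Yau/Simpson-type weak compactness statement (metrics with uniform $C^0$ bounds and $L^p$-bounded mean curvature are weakly $L^p_2$-compact), but some such argument must be supplied; as written, your closedness step is not complete.
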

		\begin{proof}
			By Lemma \ref{Lem:CpxLap},
			we may assume $f=0$. 
			We first show the uniqueness.
			Let $h_1,h_2$ be Hermite-Einstein metrics on $V$ of factor $0$ such that $h_1|_{\partial M}=h_2|_{\partial M}=\tilde{h}$.
			We denote by $A_i$ the Chern connections of $(V,h_i)$ respectively.
			Set an endomorphism $\eta:=(h_1)^{-1}h_2$ on $V$.
			Then Hermite-Einstein condition induces $\deebar(\eta)\wedge\eta^{-1}\partial_{A_1}(\eta)=\deebar\partial_{A_1}(\eta)$.
			Hence by taking the trace and contraction by $\omega$,
			we obtain $\tilde{\Delta}(\mathrm{Tr}(\eta)) = -|\deebar_{V}\eta\cdot\eta^{-1/2}|_{h_1}^2\leq 0$ because $\eta$ is a Hermitian endomorphism with respect to $h_1$.
			The same argument applies to $\eta^{-1}$ and we also obtain $\tilde{\Delta}(\mathrm{Tr}(\eta^{-1}))\leq 0$.
			Therefore $\tilde{\Delta}(\mathrm{Tr}(\eta) + \mathrm{Tr}(\eta^{-1}))\leq 0$, and then maximum principle shows $\mathrm{max}_{M}(\mathrm{Tr}(\eta) + \mathrm{Tr}(\eta^{-1})) \leq \mathrm{max}_{\partial M}(\mathrm{Tr}(\eta) + \mathrm{Tr}(\eta^{-1})) = 2r$.
			Since we have $(\mathrm{Tr}(\eta) + \mathrm{Tr}(\eta^{-1}))\geq 2r$ by some calculation,
			the equality $\mathrm{Tr}(\eta) + \mathrm{Tr}(\eta^{-1}) = 2r$ holds identically.
			Therefore we obtain $\eta=\mathrm{Id}_V$, and which proves the uniqueness.
			
			We prove the existence by the method of continuity.
			Since the space of smooth Hermitian metric on $V|_{\partial M}$ is contractible,
			there exists a smooth family $\{\tilde{h}_t\}_{t\in[0,1]}$ of smooth Hermitian metrics on $V|_{\partial M}$ such that $\tilde{h}_0$ is the trivial metric and $\tilde{h}_1 = \tilde{h}$.
			We set $I\subset[0,1]$ to be the set consisting of $t\in[0,1]$ such that the solution $h$ exists in the case $\hat{h}=\hat{h}_t$.
			Obviously $0\in I$ and particularly $I\neq\emptyset$.
			We  prove that $I$ is open.
			We fix an arbitrary $s\in I$.
			Let $X_i$ be the space of $L^2_i$-valued Hermitian endomorphism on $V$ with respect to $h_{s}$.
			Let $Y_i$ be the space of $L^2_{i}$-valued Hermitian endomorphism on $V|_{\partial M}$ with respect to $h_{s}|_{\partial M}$.
			Take a positive number $k > n$.
			Let $O\subset X_k$ be a neighborhood of $\mathrm{Id}_V$ such that $h_{s}e$ is a Hermitian metric for any $e\in O$.
			We set an operator $\Xi:O\to X_{k-2}\times Y_{k-1/2}$ to be $\Xi(e):= (\sqrt{-1}\Lambda_{\omega}F(A_{h_{s}e}),e|_{\partial M})$.
			Then we have $(d\,\Xi)|_{\mathrm{Id}_V}(v) = (\tilde{\Delta}_{h_{s}}(v),v|_{\partial M})$,
			where $\tilde{\Delta}_{h_{s}} := \sqrt{-1}\Lambda_{\omega}\deebar\partial_{A_{h_{s}}}$.
			By the same argument in Lemma \ref{Lem:CpxLap},
			$(d\,\Xi)|_{\mathrm{Id}_V}$ is an isomorphism.
			Therefore the implicit function theorem shows that $s$ is an interior point of $I$.
			We prove that $I$ is closed.
			Let $\{h_i\}_{i\in\mathbb{N}}$ be a sequence of Hermite-Einstein metrics on $V$ such that $\{h_i|_{\partial M}\}$ converges to a smooth Hermitian metric $\tilde{h}_{\infty}$ in the sense of $C^1$-norm.
			We introduce the Donaldson metric on Hermitian metric space.
			For Hermitian metrics $H_1,H_2$ on $\mathbb{C}^r$,
			we set $\sigma(H_1,H_2):= \mathrm{Tr}(H_1^{-1}H_2) + \mathrm{Tr}(H_2^{-1}H_1)- 2r$.
			Then $\sigma$ is a complete metric on the space of Hermitian metrics and its topology coincides with the induced one from the linear space of skew-Hermitian forms on $\mathbb{C}^r$.
			We consider functions $d_{ij}:=\sigma(h_i,h_j)$ on $M$.
			By the same argument in the uniqueness part,
			we have $\tilde{\Delta}(d_{ij}) \leq 0$,
			and hence $\mathrm{max}_{M}(d_{ij}) = \mathrm{max}_{\partial M}(d_{ij})$.
			Since $\{h_i|_{\partial M}\}$ converges to $\tilde{h}_{\infty}$ in $C^0$-sense,
			there exists a $C^0$-limit $h_{\infty}$ of the sequence $\{h_i\}$.
			By the elliptic regularity and the (vacuum) Hermite-Einstein equation $\Lambda_{\omega}\deebar(h^{-1}\partial h)=0$, it is sufficient to prove the regularity of $h_{\infty}$ that the $C^1$-norms of $h_i$ are bounded.
			We assume that the norms $||h_i||_{C^1}$ are unbounded.
			Without loss of generality,
			we may assume $||h_i||_{C^1} \to \infty$.
			For $i\in\mathbb{N}$,
			we take $p_i \in M$ to satisfy $m_i := |dh_i|(p_i) \to \infty$.
			First we consider the case $\delta =\underline{\lim}_i\;\mathrm{dist}(p_i,\partial M)\cdot m_i > 0$.
			For $i\in \mathbb{N}$,
			we set $K_i$ to be the rescaling of $h_i|_{B(p_i,\delta/m_i)}$ to $B(0,\delta)\subset \mathbb{C}^n$,
			where $B(p,\eps):=\{z\in\mathbb{C}^n \mid |z-p|\leq\eps\}$.
			Since $\{h_i\}$ converge to $h_{\infty}$ in $C^0$-sense,
			$\{K_i\}$ converges to a constant metric $K_{\infty}$ in $C^0$-sense.
			We have $||dK_i||_{C^0} = 1$ and each $K_i$ is a Hermite-Einstein metric on $B(0,\delta)$ with respect to each rescaled Hermitian metric.
			Hence $\{K_i\}$ converges to the constant metric $K_{\infty}$ in $C^1$-sense.
			However, this contradict to the assumption $|dK_i|(0)=|dh_i|(p)/m_i = 1$.
			Next we consider the case $\underline{\lim}_i\;\mathrm{dist}(p_i,\partial M)\cdot m_i = 0$.
			We use closed half balls instead of closed balls,
			and then a similar argument induces a contradiction.
			Therefore the $C^1$-norms of $h_i$ are bounded and this completes the proof.
		\end{proof}
	\subsection{$3$-dimensional Sasakian manifolds}
		By following \cite{Ref:Boy-Cha},
		we recall the notion of Sasakian manifolds.
		\begin{Def}
			Let $(X,g)$ be a $2n+1$-dimensional Riemannian manifold.
			If there exists a $\mathbb{R}_{+}$-invariant complex structure $J$ on $\mathbb{R}_{+}\times X$ such that $(\mathbb{R}_{+}\times X,J, dr^2 + r^2g)$ is K\"{a}hler,
			then the tuple $(X,g,J)$ is called a Sasakian manifold.
			For a Sasakian manifold  $(X,g,J)$,
			the Killing vector field $\xi:= -J(\partial_r)|_{\{1\}\times X}$ on $X$ is called \textbf{the Reeb vector field} on $(X,g,J)$.
		\end{Def}
		Let $(X,g,J)$ be a Sasakian manifold.
		If any orbits of the Reeb vector field of $(X,g,J)$ is compact,
		then $(X,g,J)$ is called a quasi-regular Sasakian manifold.
		For a quasi-regular Sasakian manifold $(X,g,J)$,
		the Reeb vector field determines an almost free $S^1$-action on $X$.
		In particular,
		$X/S^1(= (\mathbb{R}_{+}\times X) / \mathbb{C}^{\ast})$ is Morita-equivalent to a complex orbifold.
		For a compact $3$-dimensional Sasakian manifold,
		we have the following result in \cite{Ref:Boy-Cha}.
		\begin{Thm}
			Any compact $3$-dimensional Sasakian manifolds are quasi-regular. 
		\end{Thm}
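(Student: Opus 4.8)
The plan is to study the one-parameter group of isometries generated by the Reeb field and to reduce the whole problem to a statement about the dimension of its closure. First I would recall the standard consequence of the K\"{a}hler cone picture that the Reeb field $\xi = -J(\partial_r)|_{\{1\}\times X}$ is a Killing field of unit length for $g$ (it is the restriction of the holomorphic Killing field $J(r\partial_r)$ on the cone). Hence the flow $\{\phi_t\}$ of $\xi$ is a one-parameter subgroup of $\mathrm{Isom}(X,g)$, which is a compact Lie group because $X$ is compact. Let $T$ be the closure of $\{\phi_t\}$ in $\mathrm{Isom}(X,g)$; it is a torus acting on $X$ by isometries, and since each element of $T$ is a limit of the $\phi_t$ it preserves $\xi$, the contact form $\alpha_\xi := g(\xi,\cdot)$, and the entire transverse K\"{a}hler structure. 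By definition $(X,g,J)$ is quasi-regular exactly when every orbit of $\xi$ is closed, and this holds if and only if $\dim T = 1$. Since $\xi$ never vanishes we have $\dim T \geq 1$, so the entire content is to prove $\dim T \leq 1$.

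Next I would bound $\dim T$ from above and set up the dichotomy. For commuting $T$-invariant generators $\zeta_1,\zeta_2$ of $\mathrm{Lie}(T)$, the functions $\alpha_\xi(\zeta_i)$ are $T$-invariant, so $d\alpha_\xi(\zeta_1,\zeta_2) = \zeta_1(\alpha_\xi(\zeta_2)) - \zeta_2(\alpha_\xi(\zeta_1)) - \alpha_\xi([\zeta_1,\zeta_2]) = 0$; thus every $T$-orbit is isotropic for the transverse symplectic form $d\alpha_\xi$, whose kernel is exactly $\langle\xi\rangle$. As the transverse complex dimension is $1$, an isotropic subspace of the contact distribution is at most $1$-dimensional, and together with the (non-vanishing, hence unstabilized) $\xi$-direction this forces $\dim T \leq 2$. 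It therefore remains to exclude $\dim T = 2$. Assuming $\dim T = 2$, pick a generator $\eta$ of $\mathrm{Lie}(T)$ independent from $\xi$. The function $h := \alpha_\xi(\eta)$ is basic ($\xi(h)=0$, since $\mathcal{L}_\xi\alpha_\xi=0$ and $[\xi,\eta]=0$) and $T$-invariant, and the field $W := \eta - h\,\xi$ lies in the contact distribution with $\iota_W d\alpha_\xi = \iota_\eta d\alpha_\xi = -dh$. Because $\eta$ preserves the transverse K\"{a}hler structure and $W \equiv \eta$ modulo $\langle\xi\rangle$, the class of $W$ in $TX/\langle\xi\rangle$ is a nonzero transverse holomorphic Killing field.

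The main obstacle is to derive a contradiction from this transverse $2$-torus of symmetries, i.e.\ to force the generic Reeb orbits to close up. I would approach it through the metric cone $C(X) = \mathbb{R}_+\times X$, which is a K\"{a}hler surface on which $T$ acts holomorphically and isometrically together with the radial scaling; complexifying, $(\mathbb{C}^\ast)^2$ acts effectively and holomorphically on the complex surface $C(X)$, so $C(X)$ is a normal affine toric surface with an isolated singularity at the apex, whence the link $X$ is a lens space and in particular Seifert fibered. The genuinely hardest point, where I expect all the difficulty to concentrate, is to show that the transverse K\"{a}hler (contact) positivity is incompatible with a two-dimensional orbit closure: a generic leaf closure would be a $d\alpha_\xi$-isotropic $2$-torus, and ruling this out amounts to a cohomological argument showing that the transverse K\"{a}hler class $[d\alpha_\xi]$ of a compact Sasakian $3$-fold is the rational (orbifold) Euler class of a Seifert $S^1$-fibration, so that the Reeb ray in $\mathrm{Lie}(T)$ is rational and $\dim T = 1$. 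This integrality of the transverse K\"{a}hler class, equivalently the classification of Riemannian flows on compact $3$-manifolds carrying a transverse K\"{a}hler form, is the crux of the proof.
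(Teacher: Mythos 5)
First, a point of comparison: the paper does not prove this statement at all --- it is simply quoted from \cite{Ref:Boy-Cha} --- so your argument has to stand on its own. Its first two stages do stand, and they are the standard reduction: the Reeb field is Killing, so the closure $T$ of its flow in the compact group $\mathrm{Isom}(X,g)$ is a torus acting effectively and preserving $\alpha_\xi=g(\xi,\cdot)$ and the transverse structure; quasi-regularity (all orbits compact) is equivalent to $\dim T=1$; and your isotropy computation $d\alpha_\xi(\zeta_1,\zeta_2)=0$ correctly forces $1\leq \dim T\leq 2$.

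The step you yourself flag as the crux, however, is not merely an unproved lemma: it is a step that cannot be filled, because the case $\dim T=2$ genuinely occurs. Take the standard CR structure on $S^3\subset\mathbb{C}^2$ and the Reeb field $\xi_w=w_1\partial_{\phi_1}+w_2\partial_{\phi_2}$ (with $\phi_i$ the phase of $z_i$), where $w_1,w_2>0$ and $w_1/w_2\notin\mathbb{Q}$. This is Sasakian in exactly the sense defined in the paper: the cone is $\mathbb{C}^2\setminus\{0\}$ with its \emph{standard} complex structure, which is invariant under the weighted scaling flow (the weighted Euler field is the real part of the holomorphic field $\sum_i w_iz_i\partial_{z_i}$), equipped with a weighted K\"{a}hler cone potential; yet its generic Reeb orbits are dense in $2$-tori. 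These ``weighted Sasakian spheres'' (and their lens-space quotients) appear in \cite{Ref:Boy-Cha} itself as the basic examples of \emph{irregular} Sasakian structures. Note that your own reduction lands exactly on them --- an affine toric surface cone with lens-space link --- rather than excluding them, and the integrality you hope for fails precisely there: the basic class $[d\alpha_\xi]$ is rational if and only if the ray $\mathbb{R}_{+}\xi\subset\mathrm{Lie}(T)$ is rational, which is the conclusion you would need, not something that follows from transverse K\"{a}hler positivity. Consequently, with the paper's definition of quasi-regular, the statement as posed is false, and no completion of your argument (or any other) can exist. What is true, and what your torus $T$ actually delivers via a Rukimbira-type perturbation, is that $\xi$ can be deformed inside $\mathrm{Lie}(T)$ to a rational direction, yielding a quasi-regular Sasakian structure on the same underlying manifold and CR structure; that weaker existence statement is what the rest of the paper really uses (it only needs some Seifert fibration $\pi:X\to\Sigma$ over an orbifold Riemann surface), but it is not the theorem you were asked to prove, and your write-up should make that discrepancy explicit rather than search for the missing ``cohomological argument.''
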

	\subsection{The Mini-holomorphic Structure}
		Let $X$ be a $2n+1$ dimensional manifold.
		\begin{Def}
			Let $\partial_{t}$ be a nowhere vanishing vector field on $X$.
			Let $\alpha\in \Omega^{1}_X$ be a $\partial_{t}$-invariant $1$-form such that $\ip<\alpha,\partial_{t}>=1$.
			Let $\pi:X\to Y$ be a $\partial_{t}$-invariant submersion to an $n$-dimensional complex orbifold $Y$.
			For a local vector field $\nu$ on $Y_0$,
			we will denote by $\widetilde{\nu}$ the lift of $\nu$ by the isomorphism $\mathrm{Ker}(\alpha) \simeq \pi^{\ast}TY$.
			\begin{itemize}
				\item 
					The tuple $(\partial_{t},\alpha,Y,\pi)$ is called an almost mini-holomorphic structure on $X$.
				\item
					We define $\Omega^{0,1}_X := \pi^{\ast}\Omega^{0,1}_{Y}\oplus \underline{\mathbb{C}}\alpha$ and $\Omega^{0,i}_X := \bigwedge^{i}\Omega^{0,1}_X$,
					where $\underline{\mathbb{C}}\alpha$ means a subbundle of $\Omega^{1}_{X,\mathbb{C}}$ spanned by $\alpha$.
				\item
					An almost mini-holomorphic structure $(\partial_{t},\alpha,Y,\pi)$ is called a mini-holomorphic structure on $X$ if we have $(d\alpha)^{0,2}=0$,
					where $(d\alpha)^{0,2}$ is the $\Omega^{0,2}_X$-component of $d\alpha$ with respect to the decomposition $\Omega^{2}_{X,\mathbb{C}} = \Omega^{0,2}_X \oplus (\Omega^{0,1}_X\otimes\pi^{\ast}\Omega^{1,0}_Y) \oplus \pi^{\ast}\Omega^{2,0}_Y$.
				\item 
					We define a differential operator $\deebar_X:\Omega^{0,0}_X \to \Omega^{0,1}_X$ to be $\deebar_X(f) := \partial_{t}(f)\alpha + \sum_i\widetilde{\partial_{\bar{z}^i}}(f)\pi^{\ast}(d\bar{z}^i)$,
					where $(z^i)$ is a holomorphic local chart on $Y_0$.
					Moreover,
					we extend $\deebar_X$ to be an operator from $\Omega^{0,i}_X$ to $\Omega^{0,i+1}_X$ by a usual way.
				\item
					If a function $f$ locally defined on $X$ satisfies $\deebar_X(f)=0$,
					then $f$ is said to be a mini-holomorphic function.
			\end{itemize}
		\end{Def}
		\begin{Remark}\ \,
			\begin{itemize}
				\item
					If $Y$ is a complex manifold and $X=\mathbb{R}_t\times Y$ or $S_t^1\times Y$,
					then there exists the trivial mini-holomorphic structure $(\partial_t,dt,Y,p:X\to Y)$,
					where $p$ is the projection.
					On the trivial mini-holomorphic structure,
					the notion of mini-holomorphic bundle as above defined agree with the one in \cite{Ref:Moc-Yos}.
				\item 
					For a mini-holomorphic structure $(\partial_{t},\alpha,Y,\pi)$,
					we have $\deebar_X \comp \deebar_X=0$.
				\item
					For a nonempty open subset $U\subset X$,
					the tuple $(\partial_{t}|_U,\alpha|_U,\pi(U),\pi|_U)$ is a mini-holomorphic structure on $U$.
			\end{itemize}
		\end{Remark}
		For example,
		Any principal $S^1$-bundle on a Riemann surface with a connection has a unique mini-holomorphic structure.
		
		Let $(M,g_M)$ be a compact Sasakian $3$-fold and $\partial_{t}$ the Reeb vector field on $M$.
		Let $\Sigma$ be the orbifold Riemann surface obtained as the quotient of $M$ by the $S^1$-action induced by $\partial_{t}$, and $\pi:M\to\Sigma$ the quotient map.
		Let $\alpha\in \Gamma(M,(\pi^{\ast}\Omega^1_{\Sigma})^{\bot})$ be the unique section that satisfies $\ip<\alpha,\partial_{t}>=1$,
		where $(\pi^{\ast}\Omega^1_{\Sigma})^{\bot}$ is the orthogonal complement bundle of $\pi^{\ast}\Omega^1_{\Sigma}$ in $\Omega^1_M$.
		\begin{Prp}
			The tuple $(\partial_{t},\alpha,\Sigma,\pi)$ is a mini-holomorphic structure on $M$.
		\end{Prp}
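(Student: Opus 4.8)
The plan is to reduce the whole statement to the single integrability condition $(d\alpha)^{0,2}=0$, since the remaining data of an almost mini-holomorphic structure is essentially built into the construction. First I would identify $\alpha$ with the metric dual $\partial_{t}^{\flat}=g_M(\partial_{t},\,\cdot\,)$ of the Reeb field. Indeed, the contact distribution $\mathrm{Ker}(\alpha)$ is the $g_M$-orthogonal complement of $\mathbb{R}\partial_{t}$, and a $1$-form lies in $\pi^{\ast}\Omega^1_{\Sigma}$ exactly when it annihilates $\partial_{t}$; dualizing shows $(\pi^{\ast}\Omega^1_{\Sigma})^{\bot}=\underline{\mathbb{R}}\,\partial_{t}^{\flat}$. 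Since the Reeb field of a Sasakian manifold has unit length, $\langle\partial_{t}^{\flat},\partial_{t}\rangle=1$, so $\partial_{t}^{\flat}$ is exactly the section characterizing $\alpha$, i.e. $\alpha=\partial_{t}^{\flat}$.

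Next I would record the two Sasakian facts I need, both standard consequences of the K\"{a}hler cone description: the Reeb field $\partial_{t}$ is a unit Killing field, and the quotient $\pi:M\to\Sigma$ by the (almost free, by the quasi-regularity theorem) $S^1$-action is a submersion onto a complex orbifold whose transverse holomorphic structure identifies $\mathrm{Ker}(\alpha)$ with $\pi^{\ast}T_{\mathbb{C}}\Sigma$. Because $\partial_{t}$ is Killing and $\mathcal{L}_{\partial_{t}}\partial_{t}=0$, its dual form is invariant: $\mathcal{L}_{\partial_{t}}\alpha=\mathcal{L}_{\partial_{t}}\bigl(g_M(\partial_{t},\,\cdot\,)\bigr)=0$. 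Together with $\langle\alpha,\partial_{t}\rangle=1$ and the $\partial_{t}$-invariance of $\pi$, this shows that $(\partial_{t},\alpha,\Sigma,\pi)$ is an almost mini-holomorphic structure.

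For the integrability I would use Cartan's formula. Since $\alpha$ is $\partial_{t}$-invariant and $\iota_{\partial_{t}}\alpha=1$ is constant,
\[
	\iota_{\partial_{t}}d\alpha=\mathcal{L}_{\partial_{t}}\alpha-d(\iota_{\partial_{t}}\alpha)=0.
\]
Thus $d\alpha$ is horizontal, i.e. it carries no $\alpha$-factor when expanded in the local coframe $\{\alpha,\pi^{\ast}dz,\pi^{\ast}d\bar z\}$, so $d\alpha\in\Gamma(\bigwedge^2\pi^{\ast}\Omega^1_{\Sigma})$. Now, on the Riemann surface $\Sigma$ one has $\Omega^{0,2}_{\Sigma}=0$, so in the decomposition $\Omega^{2}_{X,\mathbb{C}}=\Omega^{0,2}_X\oplus(\Omega^{0,1}_X\otimes\pi^{\ast}\Omega^{1,0}_{\Sigma})\oplus\pi^{\ast}\Omega^{2,0}_{\Sigma}$ every element of $\Omega^{0,2}_X=\underline{\mathbb{C}}\alpha\wedge\pi^{\ast}\Omega^{0,1}_{\Sigma}$ necessarily involves $\alpha$. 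Since $d\alpha$ has no $\alpha$-factor, its $\Omega^{0,2}_X$-component vanishes, giving $(d\alpha)^{0,2}=0$.

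I expect the only real obstacle to lie in the bookkeeping of the first two paragraphs: correctly matching the paper's metric-orthogonal definition of $\alpha$ with the contact form of the Sasakian structure, and invoking the quasi-regularity theorem to guarantee that $\Sigma$ is genuinely a complex (indeed K\"{a}hler) orbifold with $\mathrm{Ker}(\alpha)\cong\pi^{\ast}T_{\mathbb{C}}\Sigma$. Once $\iota_{\partial_{t}}d\alpha=0$ is established, the integrability is immediate and purely formal.
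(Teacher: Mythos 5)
Your proof is correct; it is worth comparing with the paper's, which is a single sentence. The paper declares every condition other than the $\partial_{t}$-invariance of $\alpha$ to be trivial, and deduces that invariance from the fact that $\mathrm{rank}((\pi^{\ast}\Omega^1_{\Sigma})^{\bot})=1$: since the flow of $\partial_{t}$ preserves $g_M$ (the Reeb field is Killing) and preserves $\pi$, it preserves this line bundle, so it can only rescale $\alpha$, and the normalization $\langle\alpha,\partial_{t}\rangle=1$ then forces the rescaling factor to be $1$. You reach the same invariance by a more explicit route: you first identify $\alpha$ with the metric dual $\partial_{t}^{\flat}=g_M(\partial_{t},\cdot)$ of the Reeb field (legitimate, since the Reeb field has unit length, so $\langle\partial_{t}^{\flat},\partial_{t}\rangle=1$ and the uniqueness clause in the definition of $\alpha$ applies), and then compute $\mathcal{L}_{\partial_{t}}\alpha=(\mathcal{L}_{\partial_{t}}g_M)(\partial_{t},\cdot)+g_M(\mathcal{L}_{\partial_{t}}\partial_{t},\cdot)=0$ directly from the Killing property. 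Both arguments rest on the same Sasakian input (unit Killing Reeb field); yours buys an explicit formula for $\alpha$, while the paper's rank argument avoids needing one. Your third paragraph --- Cartan's formula gives $\iota_{\partial_{t}}d\alpha=\mathcal{L}_{\partial_{t}}\alpha-d(\iota_{\partial_{t}}\alpha)=0$, so $d\alpha$ is horizontal, and since every element of $\Omega^{0,2}_X=\underline{\mathbb{C}}\alpha\wedge\pi^{\ast}\Omega^{0,1}_{\Sigma}$ carries an $\alpha$-factor, $(d\alpha)^{0,2}=0$ --- is exactly the content the paper compresses into the word ``trivial''; it is a useful observation that this step consumes the already-established invariance of $\alpha$, which explains why invariance is the one condition the paper singles out as non-trivial.
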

		\begin{proof}
			Any conditions other than the $\partial_{t}$-invariance of $\alpha$ is trivial,
			and $\alpha$ is $\partial_{t}$-invariant because $\mathrm{rank}((\pi^{\ast}\Omega^1_{\Sigma})^{\bot})=1$.
		\end{proof}
		\begin{Def}
			Let $(\partial_{t},\alpha,Y,\pi)$ be a mini-holomorphic structure on $X$.
			\begin{enumerate}[label=(\roman*)]
				\item 
					Let $V$ be a complex vector bundle on $X$.
					A differential operator $\deebar_{V}: \Omega_X^{0,i}(V)\to \Omega_X^{0,i+1}(V)$ is said to be a mini-holomorphic structure on $V$ if the following conditions are satisfied:
					\begin{itemize}
						\item 
						The Leibniz rule $\deebar_{V}(fs) = \deebar_{X}(f)\wedge s + f\deebar_{V}(s)$ holds for any $f\in C^{\infty}(X)$ and $s\in\Omega_X^{0,i}(V)$.
						\item
						The integrability condition $\deebar_{V}\comp\deebar_{V}=0$ holds.
					\end{itemize}
					We call the pair $(V,\deebar_{V})$ a mini-holomorphic bundle on $X$.
				\item
					If a local section $v$ on $V$ satisfies $\deebar_V(v)=0$,
					then we call $v$ a mini-holomorphic section.
				\item
					Let $(V_i,\deebar_{V_i})$ be a mini-holomorphic bundle on $X$ for $i=1,2$.
					A homomorphism $\phi:V_1\to V_2$ is mini-holomorphic if we have $\deebar_{V_2}\comp\phi = \phi\comp\deebar_{V_1}$.
					Moreover, if $\phi$ is an injective homomorphism of vector bundles,
					then $V_1$ is called a mini-holomorphic subbundle of $V_2$.
			\end{enumerate}
		\end{Def}
		Let $(\partial_{t},\alpha,Y,\pi)$ be a mini-holomorphic structure on $X$ and $(V,\deebar_{V})$ a mini-holomorphic bundle on $X$.
		Let $U\subset X$ be a sufficiently small open subset such that $\pi$ can be lifted to $\pi_U:U\to Y_0$.
		Let $W\subset \pi_U(U)$ be an open subset. 
		A smooth map $s:W\to X$ is called a section on $W$ \textit{i.e.}, $s$ satisfies $\pi_U\comp s = \mathrm{Id}_{W}$.
		For a section $s:W\to X$,
		the pullback $s^{\ast}V$ has a natural holomorphic structure $\deebar_{s^{\ast}V}:\Omega^{0,0}_{W}(s^{\ast}V) \to \Omega^{0,1}_{W}(s^{\ast}V)$ defined as follows:
		\[
			\deebar_{s^{\ast}V}(v) := \deebar_{V}(\tilde{v})|_{s(W)}
			\ \ \ \ \ \ \ \ (v\in \Omega^{0,0}_{W}(s^{\ast}V)),
		\]
		where $\tilde{v}$ is the local section of $V$ on a neighborhood of $s(W)$ that is obtained by the parallel transport of $v$ with the differential equation $\ip<\partial_{t},\deebar_V(\tilde{v})>=0$ along each integral curve of $\partial_t$.
		
		We define the scattering map of a mini-holomorphic bundle in our context.
		Since the following arguments are local with respect to $X$ and $Y$,
		we assume that $Y$ is a domain of $\mathbb{C}^n$ and $X=(-\eps,\eps)_t \times Y$.
		Let $s_1,s_2$ be a section on $Y$.
		We set the scattering map $\Psi_{s_1,s_2}:(s_1)^{\ast}V \simeq (s_2)^{\ast}V$ to be $\Psi_{s_1,s_2}(v) := \tilde{v}|_{s_2(Y)}$,
		where $\tilde{v}$ is the parallel transport of $v$.
		The scattering map $\Psi_{s_1,s_2}$ is obviously an isomorphism of differentiable vector bundle.
		\begin{Prp}[\cite{Ref:Cha-Hur}]
			The scattering map $\Psi_{s_1,s_2}$ is a holomorphic isomorphism.
		\end{Prp}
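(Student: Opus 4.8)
The scattering map is already a smooth bundle isomorphism, so it suffices to prove that $\Psi_{s_1,s_2}$ is holomorphic, since a bijective holomorphic bundle map is automatically a holomorphic isomorphism. The plan is to reduce the statement to the intertwining relation $\deebar_{(s_2)^{\ast}V}\comp\Psi_{s_1,s_2}=\Psi_{s_1,s_2}\comp\deebar_{(s_1)^{\ast}V}$, and to trace everything back to a single $\partial_t$-parallel section over $X$ on which the integrability $\deebar_V\comp\deebar_V=0$ can be used. Throughout I write $D_t(s):=\ip<\partial_t,\deebar_V(s)>$ and $D_{\bar j}(s):=\ip<\widetilde{\partial_{\bar z^j}},\deebar_V(s)>$, so that $\deebar_V(s)=D_t(s)\,\alpha+\sum_j D_{\bar j}(s)\,\pi^{\ast}(d\bar z^j)$; by construction the parallel transport $\tilde v$ is the solution of the ODE $D_t(\tilde v)=0$.

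First I would fix a smooth section $v$ of $(s_1)^{\ast}V$, let $\tilde v$ be its parallel transport over $X$, and observe that since the integral curves of $\partial_t$ are the fibers $\{(\,\cdot\,,y)\}$, each curve meets both $s_1(Y)$ and $s_2(Y)$ exactly once; hence $\tilde v$ is simultaneously the parallel transport of $\Psi_{s_1,s_2}(v)=\tilde v|_{s_2(Y)}$. Because $D_t(\tilde v)=0$, we have $\deebar_V(\tilde v)=\sum_j\eta_j\,\pi^{\ast}(d\bar z^j)$ with $\eta_j:=D_{\bar j}(\tilde v)$, and unwinding the definition of the two holomorphic structures together with $(\pi\comp s_i)^{\ast}d\bar z^j=d\bar z^j$ gives
\[
	\deebar_{(s_1)^{\ast}V}(v)=\sum_j(\eta_j|_{s_1(Y)})\otimes d\bar z^j,
	\qquad
	\deebar_{(s_2)^{\ast}V}(\Psi_{s_1,s_2}v)=\sum_j(\eta_j|_{s_2(Y)})\otimes d\bar z^j.
\]
Since the scattering map acts on $\pi^{\ast}\Omega^{0,1}_Y$-valued sections as $\Psi_{s_1,s_2}\otimes\mathrm{Id}$, the intertwining relation is equivalent to $\Psi_{s_1,s_2}(\eta_j|_{s_1(Y)})=\eta_j|_{s_2(Y)}$ for every $j$.

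The heart of the argument is to show that each $\eta_j$ is itself $\partial_t$-parallel, i.e. $D_t(\eta_j)=0$. Granting this, uniqueness of solutions of the transport ODE yields $\widetilde{\eta_j|_{s_1(Y)}}=\eta_j$, whence $\Psi_{s_1,s_2}(\eta_j|_{s_1(Y)})=\eta_j|_{s_2(Y)}$ and the proof is complete. To obtain $D_t(\eta_j)=0$ I would apply $\deebar_V$ once more and use integrability:
\[
	0=\deebar_V\comp\deebar_V(\tilde v)=\sum_j\deebar_V(\eta_j)\wedge\pi^{\ast}(d\bar z^j)+\sum_j\eta_j\,\deebar_X\bigl(\pi^{\ast}(d\bar z^j)\bigr).
\]
The second sum vanishes because $d\bar z^j$ is closed, so $\deebar_X(\pi^{\ast}d\bar z^j)=\bigl(d\,\pi^{\ast}d\bar z^j\bigr)^{0,2}=0$ independently of $\alpha$. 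Expanding $\deebar_V(\eta_j)=D_t(\eta_j)\,\alpha+\sum_k D_{\bar k}(\eta_j)\,\pi^{\ast}(d\bar z^k)$ and reading off the coefficient of the linearly independent $2$-forms $\alpha\wedge\pi^{\ast}(d\bar z^j)$ gives $D_t(\eta_j)=0$ for each $j$. The main obstacle is exactly this last extraction: one must verify that $\deebar_X$ annihilates the closed horizontal forms $\pi^{\ast}d\bar z^j$ (so that no curvature of $\alpha$ contaminates the $\alpha$-components) and that the $\alpha\wedge\pi^{\ast}(d\bar z^j)$-components isolate $D_t(\eta_j)$ cleanly; once this bookkeeping is done, the integrability condition does all the work.
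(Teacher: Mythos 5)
Your proof is correct and takes essentially the same approach as the paper: the paper takes a holomorphic section $v$ and asserts that, by the integrability condition $\deebar_V\comp\deebar_V=0$, the parallel transport $\tilde{v}$ satisfies $\deebar_V(\tilde{v})=0$, which is exactly your computation ($D_t(\eta_j)=0$ from the $\alpha\wedge\pi^{\ast}(d\bar z^j)$-components, plus ODE uniqueness along the $\partial_t$-flow) specialized to the case $\eta_j|_{s_1(Y)}=0$. Your version, phrased for arbitrary smooth sections as an intertwining relation, simply fills in the details the paper leaves implicit.
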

		\begin{proof}
			Let $v$ be a local holomorphic section of $(s_1)^{\ast}V$.
			By the integrability condition $\deebar_V\comp \deebar_{V}=0$, the parallel transport $\tilde{v}$ satisfies $\deebar_V(\tilde{v})=0$.
			Therefore $\Psi_{s_1,s_2}(v) = (s_2)^{\ast}(\tilde{v})$ is a holomorphic section of $(s_2)^{\ast}V$.
		\end{proof}
		
		In the following part of this subsection,
		we assume $\mathrm{dim}(X)=3$.
		We define the notion of algebraic Dirac-type singularities of mini-holomorphic bundle on $X$.
		Similar in the above argument,
		we assume that $Y$ is a neighborhood of $p\in\mathbb{C}$ and $X=(-\eps,\eps)_t \times Y$.
		Let $(V,\deebar_{V})$ be a mini-holomorphic bundle on $X\setminus\{(0,p)\}$.
		Set sections $s_1,s_2:Y \to X$ as $s_i(z) := ((-1)^{i}(\eps/2),z)$.
		\begin{Def}\label{Def:alg. Dirac-type sing.}
			\;\,\ 
			\begin{itemize}
				\item 
					The point $(0,p)$ is an algebraic Dirac-type singularity of $(V,\deebar_V)$ if the scattering map $\Psi_{s_1,s_2}$ can be prolonged to the meromorphic isomorphism $(s_1)^{\ast}V(\ast \pi(p)) \simeq (s_2)^{\ast}V(\ast \pi(p))$.
					Moreover,
					The algebraic Dirac-type singularity $(0,p)$ is of weight $\vec{k}=(k_i)\in\mathbb{Z}^r$ if there exist holomorphic frames $\mb{e}_i=(e_{i,j})$ of $(s_i)^{\ast}V\;\;(i=1,2)$ such that we have $\Psi_{s_1,s_2}(e_{1,j})=z^{k_j}e_{2,j}$,
					where $z$ is a holomorphic local chart on $Y$ such that $z(p)=0$.
				\item
					Let $Z\subset X$ be a discrete subset.
					If each point $p\in Z$ is an algebraic Dirac-type singularity of a mini-holomorphic bundle $(E,\deebar_E)$ on $X\setminus Z$,
					then we call $(E,\deebar_E)$ a Dirac-type singular mini-holomorphic bundle on $(X,Z)$.
			\end{itemize}
		\end{Def}
		\begin{Remark}
			Since $\mathbb{C}\{z\}$ is a PID,
			the weight of an algebraic Dirac-type singularity is unique up to permutations. 
		\end{Remark}
	\subsection{The Hopf fibration and Dirac-type HE-monopoles}
		\subsubsection{The Hopf fibration}
			Let $U\subset \mathbb{R}^3$ be a neighborhood of $0\in\mathbb{R}^3$ and $g$ be a Riemannian metric on $U$.
			We assume that the canonical coordinate of $\mathbb{R}^3$ is the normal coordinate of $g$ at $0\in\mathbb{R}^3$.
			Set the Hopf fibration $p:\mathbb{R}^4=\mathbb{C}^2 \to\mathbb{R}^3=\mathbb{R}\times\mathbb{C}$ to be $p(z_1,z_2)=(|z_1|^2-|z_2|^2, 2z_1z_2)$, where we set $z_i=x_i+\sqrt{-1}y_i$.
			We also set the $S^1(= \mathbb{R}/2\pi\mathbb{Z})$-action on $\mathbb{C}^2$ to be $\theta\cdot(z_1,z_2) := (e^{\sqrt{-1}\theta}z_1,e^{-\sqrt{-1}\theta}z_2)$.
			Then the restriction $p:\mathbb{R}^4\setminus\{0\} \to \mathbb{R}^3\setminus\{0\}$ forms a principal $S^1$-bundle.
			\begin{Lem}\label{Lem:prepare of Pauly condition}
				There exist a harmonic function $f:U\setminus\{0\}\to\mathbb{R}$ with respect to the metric $g$ and a $1$-form $\xi$ on $p^{-1}(U)$ such that the following hold.
				\begin{itemize}
					\item
						The $1$-form $\omega:=p^{\ast}f\cdot\xi$ is a connection of $p:\mathbb{R}^4\setminus\{0\} \to \mathbb{R}^3\setminus\{0\}$,
						\textit{i.e.} $\omega$ is $S^1$-invariant, and we have $\omega(\partial_\theta)=1$.
						Here $\partial_\theta$ is the generating vector field of the $S^1$-action on $\mathbb{R}^4\setminus\{0\}$.
					\item
						We have $d\omega=-p^{\ast}(\ast df)$.
					\item 
						We have the following estimates:
						\begin{align*}
							\left\{
								\begin{array}{l}
									f = 1/2r_3 + o(1)\\
									\xi = 2(-y_1dx_1 + x_1dy_1 + y_2dx_2 - x_2dy_2 ) + O(r^2_4).\\
								\end{array}
							\right.
						\end{align*}
					\item
						The symmetric tensor $g_4=p^{\ast}f(p^{\ast}g + \xi^2)$ is a Riemannian metric of $L^2_{5,\mathrm{loc}}$-class on $p^{-1}(U)$,
						and we have an estimate $|g_4-2g_{4,\mathrm{Euc}}|_{g_{4,\mathrm{Euc}}}=O(r_4)$.
						Here a function on $p^{-1}(U)$ is of $L^2_{k,\mathrm{loc}}$-class if every derivative of $f$ up to order $k$ has a finite $L^2$-norm on any compact subset of $p^{-1}(U)$.
				\end{itemize}
			\end{Lem}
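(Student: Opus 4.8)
\emph{Strategy.} The plan is to build $f$ as a normalized Green's function for the Laplace--Beltrami operator $\Delta_g$, to realize $-p^{\ast}(\ast df)$ as the curvature of a connection $\omega$ on the Hopf bundle, to set $\xi := \omega/p^{\ast}f$, and finally to read off the asymptotics by comparison with the flat Hopf fibration. For the first step, let $G(\,\cdot\,,0)$ be the Green's function of $\Delta_g$ with pole at $0$, so that $G$ is harmonic on $U\setminus\{0\}$ and, in the canonical coordinate, $G=\tfrac{1}{4\pi r_3}+(\text{smooth remainder})$. Since that coordinate is normal at $0$, the metric is Euclidean to second order, so the standard parametrix expansion gives $2\pi G=\tfrac{1}{2r_3}+O(1)$ with smooth remainder; subtracting the value of the remainder at $0$ (which does not change $df$) and shrinking $U$ to preserve positivity, I set $f:=2\pi G$ and obtain a positive harmonic function with $f=\tfrac{1}{2r_3}+o(1)$.

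\emph{The connection.} Because $f$ is harmonic, $d(\ast df)=(\Delta_g f)\,\mathrm{vol}_g=0$, so $\ast df$ is a closed $2$-form on $U\setminus\{0\}$ and $-p^{\ast}(\ast df)$ is closed on $p^{-1}(U\setminus\{0\})$. A flux computation over a small geodesic sphere $S_\rho$ about $0$ gives $\int_{S_\rho}(-\ast df)=2\pi$, which is precisely what the normalization $\tfrac{1}{2r_3}$ arranges; this equals $2\pi$ times the Chern number of the Hopf bundle $p$. Since $U\setminus\{0\}$ is homotopy equivalent to $S^2$ and $H^2(U\setminus\{0\};\mathbb{R})\cong\mathbb{R}$ is detected by this period, the class of $-\ast df$ agrees with the curvature class of $p$. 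Hence, starting from any fixed connection $\omega_0$ with curvature $p^{\ast}F_0$ and solving $d\beta=-\ast df-F_0$ for a $1$-form $\beta$ on $U\setminus\{0\}$, the form $\omega:=\omega_0+p^{\ast}\beta$ is a connection with $d\omega=-p^{\ast}(\ast df)$. As $\omega_0$ is a connection and $p^{\ast}\beta$ is basic, $\omega$ is $S^1$-invariant with $\omega(\partial_\theta)=1$, which gives the first two bullets. Setting $\xi:=\omega/p^{\ast}f$ (well defined since $f>0$) yields $\omega=p^{\ast}f\cdot\xi$.

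\emph{Asymptotics and the metric.} Comparing with the flat model, where $f_{\mathrm{flat}}=\tfrac{1}{2r_3}$ and $\omega_{\mathrm{flat}}$ is the standard Hopf connection with potential $\xi_{\mathrm{flat}}=2(-y_1dx_1+x_1dy_1+y_2dx_2-x_2dy_2)$, the deviation $\ast_g df-\ast_{\mathrm{Euc}}df_{\mathrm{flat}}$ is controlled by the $O(r_3^2)$ deviation of $g$ from the Euclidean metric and by the lower-order part of $f$. Estimating $\beta$ accordingly and dividing by $p^{\ast}f\sim\tfrac{1}{2r_4^2}$ (using $r_3=r_4^2$) gives $\xi=\xi_{\mathrm{flat}}+O(r_4^2)$. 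Since $p^{\ast}g$ is positive semidefinite with kernel the vertical direction $\partial_\theta$, while $\xi(\partial_\theta)=1/p^{\ast}f\neq 0$ removes that degeneracy, the tensor $g_4=p^{\ast}f(p^{\ast}g+\xi^2)$ is positive definite on $p^{-1}(U\setminus\{0\})$; and because the $\tfrac{1}{2r_4^2}$ singularity of $p^{\ast}f$ is exactly cancelled by the quadratic vanishing of $p^{\ast}g$ and $\xi^2$ (both $O(r_4^2)$), $g_4$ extends across the fixed point $0\in\mathbb{R}^4$ with $g_4=2g_{4,\mathrm{Euc}}+O(r_4)$, the error coming from $\xi-\xi_{\mathrm{flat}}=O(r_4^2)$.

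\emph{Main obstacle.} The delicate point is the behaviour at the fixed point $0\in\mathbb{R}^4$, where the Hopf map is critical and $f$ is singular: one must track how the $\tfrac{1}{2r_4^2}$ singularity of $p^{\ast}f$ cancels against the degeneracies of $p^{\ast}g$ and $\xi^2$, and, beyond the leading $O(r_4)$ asymptotics, verify that the resulting tensor has the claimed $L^2_{5,\mathrm{loc}}$-regularity. This demands a careful Sobolev accounting of the rational expressions assembled from $p$, $f$ and $\beta$ near $0$, rather than a mere control of their sup-norm sizes, and is where the bulk of the work lies.
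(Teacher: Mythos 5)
The paper itself gives no proof of this lemma at all---it is stated as known, being essentially the Gibbons--Hawking/Taub--NUT construction underlying Pauly's work \cite{Ref:Pau}---so the comparison can only be with what a complete argument requires. Your strategy is certainly the intended one: take $f$ a normalized Green's function of $\Delta_g$ (so $f=1/2r_3+o(1)$ after subtracting a constant), match the period $\int_{S^2}(-\ast df)=2\pi$ against the Chern class of the Hopf bundle to produce a connection $\omega$ with $d\omega=-p^{\ast}(\ast df)$, set $\xi:=\omega/p^{\ast}f$, and compare with the flat model. The first two bullets do follow from this, and your positivity argument for $g_4$ away from the origin (the kernel of $p^{\ast}g$ is vertical, and $\xi(\partial_\theta)=1/p^{\ast}f$ kills that kernel) is correct.

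The genuine gap is the fourth bullet, which you explicitly set aside as ``where the bulk of the work lies'': the $L^2_{5,\mathrm{loc}}$ regularity of $g_4$ across $0\in\mathbb{R}^4$, together with the fact that $\xi$ extends over the origin as a form on all of $p^{-1}(U)$ with the stated estimate. This is not a routine afterthought; it is the quantitative content of the lemma, and it is precisely what is consumed downstream (it is what makes $J$ of class $L^p_2$, hence the coordinates of Corollary \ref{Cor:hol. coord. on Hopf lift} of class $L^p_3$, hence Theorem \ref{Thm:Pauly's criterion} usable). Your $C^0$ asymptotics cannot be bootstrapped into it, for three concrete reasons. (a) The remainder $u=f-1/2r_3$ is \emph{not} smooth across $0$ for a general metric---your ``smooth remainder'' claim is unjustified; the parametrix gives only H\"older regularity plus an expansion, and what you actually need are weighted bounds of the shape $|\nabla^k u|=O(r_3^{1-k})$ (up to logarithms). (b) A primitive $\beta$ of $-(\ast_g df-\ast_{\mathrm{Euc}}df_{\mathrm{flat}})$ is unique only up to $d\psi$, and an arbitrary choice carries no estimates at all; one must construct $\beta$ by the radial (cone) homotopy operator so that it inherits weighted derivative bounds from the curvature discrepancy. (c) Even granting (a) and (b), individual terms are not square-integrable: for instance $\nabla^5(p^{\ast}u)=O(r_4^{-3})$, which fails to lie in $L^2$ near $0\in\mathbb{R}^4$; only after the Leibniz pairing with the quadratically vanishing factor $p^{\ast}g+\xi^2=O(r_4^2)$ in $g_4=p^{\ast}f\,(p^{\ast}g+\xi^2)$ do all products land in $L^2$, so the conclusion rests on a cancellation structure, not on sup-norm sizes. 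Until this accounting is carried out, the one part of the lemma that is not classical remains unproved.
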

			\begin{Remark}\ \,
				\begin{itemize}
					\item 
						If $g=g_{\mathbb{R}^3}$,
						we can choose $f=1/2r_3$ and $\xi = 2(-y_1dx_1 + x_1dy_1 + y_2dx_2 - x_2dy_2 )$.
						Then we have $g_4 = 2g_{\mathbb{R}^4}$.
					\item 
						By the Sobolev embedding theorem,
						the connection matrix of $A_4$ is of $C^{3}$-class.
				\end{itemize}
			\end{Remark}
		\subsubsection{The Hopf fibration of mini-holomorphic manifolds}\label{subsubsec:Hopf-Fibration}
			Let $I=(-\eps,\eps)\subset \mathbb{R}_{t}$ and $W\subset \mathbb{C}_z$ be neighborhoods of origins of $\mathbb{R}$ and $\mathbb{C}$ respectively.
			Set $U:=I\times W$, and take the projection $\pi:U\to W$.
			Let $g_W = \lambda dzd\bar{z}$ be the K\"{a}hler metric on $W$ that satisfies $\lambda(0)=1$.
			Take an $\mathbb{R}_t$-invariant $1$-form $\alpha = dt + \alpha_x dx + \alpha_y dy\in\Omega^1_{U}$ satisfying $\alpha_x(0)=\alpha_y(0)=0$.
			Set $g_U := \alpha^2 + \pi^{\ast}g_W$.
			The tuple $(\partial_{t},\alpha,W,\pi)$ forms a mini-holomorphic structure on $U$.
			
			Take a normal coordinate $(x^i)$ at $(0,0)$ on $U$ satisfying $dt|_0 = dx^1|_0$ and $dz|_{0} = (dx^2 + \sqrt{-1}dx^3)|_0$.
			Let $U_4\subset \mathbb{R}^4$ be a sufficiently small neighborhood of $0\in \mathbb{R}^4$ and $p:U_4 \to U$ the Hopf fibration with respect to the coordinate $(x^i)$.
			We take a harmonic function $f$ on $U\setminus \{0\}$ and a $1$-form $\xi$ on $U_4$ as in Lemma \ref{Lem:prepare of Pauly condition}.
			Set an $L^2_{5,\mathrm{loc}}$-metric $g_4 := p^{\ast}(p^{\ast}g_U + \xi^2)$ on $U_4$.
			Set an almost complex structure $J\in \Gamma(U_4,\mathrm{End}(TU_4))\simeq \Gamma(U_4\setminus \{0\},\mathrm{End}(\Omega^1_{U_4}))$ to be 
			\[
				\left\{
					\begin{array}{ll}
						J(-\xi) &:= -p^{\ast}\alpha\\
						J(p^{\ast}dx) &:= -p^{\ast}dy.
					\end{array}
				\right.
			\]
			\begin{Lem}
				The almost complex structure $J$ is integrable and of $L^p_{2,\mathrm{loc}}$-class on $U_4$ for any $p\in[1,\infty)$.
			\end{Lem}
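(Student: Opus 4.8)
The plan is to describe $J$ through an explicit coframe of $(1,0)$-forms on $U_4\setminus\{0\}$, to reduce integrability to the vanishing of two $(0,2)$-components, to check these using the mini-holomorphic integrability together with the Hopf relation of Lemma \ref{Lem:prepare of Pauly condition}, and finally to treat the regularity at the degenerate point $0$ and to invoke a low-regularity Newlander--Nirenberg theorem. On $U_4\setminus\{0\}$ the real forms $p^{\ast}dx,\;p^{\ast}dy,\;p^{\ast}\alpha,\;\xi$ form a coframe. From $J(-\xi)=-p^{\ast}\alpha$ and $J(p^{\ast}dx)=-p^{\ast}dy$ together with $J^2=-\mathrm{Id}$ one finds that the $+\sqrt{-1}$-eigenspace of $J$ on $\Omega^1_{U_4,\mathbb{C}}$, i.e. the space of $(1,0)$-forms, is spanned by
\[
	\theta^1 := p^{\ast}dz, \qquad \theta^2 := \xi - \sqrt{-1}\,p^{\ast}\alpha,
\]
with conjugate $(0,1)$-forms $\theta^{\bar 1}=p^{\ast}d\bar z$ and $\theta^{\bar 2}=\xi+\sqrt{-1}\,p^{\ast}\alpha$. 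Since $\dim_{\mathbb{R}}U_4=4$, formal integrability of $J$ (vanishing of the Nijenhuis tensor) is equivalent to $d\theta^1$ and $d\theta^2$ having no component proportional to $\theta^{\bar 1}\wedge\theta^{\bar 2}$, that is $(d\theta^1)^{0,2}=(d\theta^2)^{0,2}=0$.

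The form $\theta^1$ is harmless: $d\theta^1=p^{\ast}d(dz)=0$. For $\theta^2$ I would split $d\theta^2=d\xi-\sqrt{-1}\,p^{\ast}(d\alpha)$. Expanding $d\alpha$ in the coframe $\{\alpha,dz,d\bar z\}$ of $\Omega^1_{U,\mathbb{C}}$, the mini-holomorphic integrability $(d\alpha)^{0,2}=0$ says exactly that the $\alpha\wedge d\bar z$-coefficient of $d\alpha$ vanishes; as $p^{\ast}(\alpha\wedge dz)$ and $p^{\ast}(dz\wedge d\bar z)$ are of pure type $(2,0)+(1,1)$ and $(1,1)$, this gives $(p^{\ast}d\alpha)^{0,2}=0$. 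For $d\xi$ I would differentiate the connection $\omega=p^{\ast}f\cdot\xi$ to obtain $p^{\ast}(df)\wedge\xi+p^{\ast}f\,d\xi=-p^{\ast}(\ast df)$, so that $p^{\ast}f\,(d\xi)^{0,2}=-(p^{\ast}(\ast df))^{0,2}-(p^{\ast}(df)\wedge\xi)^{0,2}$. A pointwise computation of the Hodge star in the orthonormal coframe $\{\alpha,\sqrt{\lmd}\,dx,\sqrt{\lmd}\,dy\}$ shows that the $\alpha\wedge d\bar z$-coefficient of $\ast df$ equals $-\sqrt{-1}$ times the $d\bar z$-coefficient of $df$; consequently both $(0,2)$-contributions on the right are equal multiples of $\theta^{\bar 1}\wedge\theta^{\bar 2}$ with the pullback of that same coefficient as factor, and the Hopf relation forces their sum to vanish. (The flat case $g=g_{\mathbb{R}^3}$, where $J$ is literally the standard complex structure on $\mathbb{C}^2$ and $d\xi$ is of type $(1,1)$, fixes the orientation conventions under which this cancellation occurs, and harmonicity of $f$ enters only through the consistency $d(p^{\ast}\ast df)=0$ of the Hopf relation.) Hence $(d\theta^2)^{0,2}=0$ and $J$ is formally integrable.

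For the regularity I would argue as follows. The Hopf map $p$ is a smooth (polynomial) map $U_4\to U$, so $p^{\ast}\alpha$ and $p^{\ast}dz$ are smooth on all of $U_4$; the only possible loss of regularity comes from $\xi$, whose behaviour is governed by $g_4=p^{\ast}f\,(p^{\ast}g_U+\xi^2)\in L^2_{5,\mathrm{loc}}$ and by the estimate $\xi=2(-y_1dx_1+x_1dy_1+y_2dx_2-x_2dy_2)+O(r^2_4)$ of Lemma \ref{Lem:prepare of Pauly condition}. The endomorphism $J$ is the algebraic expression, in the coframe $\theta^1,\theta^{\bar 1},\theta^2,\theta^{\bar 2}$ and the dual frame, having these as its $\pm\sqrt{-1}$-eigenforms; away from $0$ this coframe is nondegenerate, so $J$ is as regular as $\xi$ and in particular smooth where $g$ is smooth. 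Using that in dimension four $L^2_{5,\mathrm{loc}}\hookrightarrow C^2$ is a Banach algebra closed under inversion of nondegenerate elements, the estimates of Lemma \ref{Lem:prepare of Pauly condition} yield $J\in L^p_{2,\mathrm{loc}}$ for every $p\in[1,\infty)$; in particular $p>4=2\dim_{\mathbb{C}}U_4$ is available, and a Newlander--Nirenberg theorem for almost complex structures of class $L^p_2$ then upgrades the formal integrability above to genuine integrability.

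The main obstacle is the analysis at the fixed point $0$ of the $S^1$-action, where the Hopf fibration degenerates, the coframe $\{p^{\ast}dx,p^{\ast}dy,p^{\ast}\alpha,\xi\}$ collapses, and both the eigenform description and the integrability computation are only valid on $U_4\setminus\{0\}$. One must show that $J$, \emph{a priori} controlled only away from $0$, extends across $0$ with the asserted Sobolev regularity and remains formally integrable there. I expect this to rest on a rescaling argument near $0$ comparing $J$ with the flat model, in the same spirit as the blow-up/rescaling estimate used in the proof of Proposition \ref{Prp:Bdd prblm of HE-metric}, together with the verification that the low-regularity Newlander--Nirenberg theorem applies at the single degenerate point.
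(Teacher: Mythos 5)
Your integrability computation is sound and is, in substance, the ``easy calculation'' that the paper leaves implicit: passing to the $(1,0)$-coframe $\theta^1 = p^{\ast}dz$, $\theta^2 = \xi - \sqrt{-1}\,p^{\ast}\alpha$ on $U_4\setminus\{0\}$, and killing $(d\theta^2)^{0,2}$ by combining $(d\alpha)^{0,2}=0$ with the relation $d(p^{\ast}f\cdot\xi) = -p^{\ast}(\ast df)$ of Lemma \ref{Lem:prepare of Pauly condition}; your caution about orientation conventions is warranted, since the cancellation between $(p^{\ast}(\ast df))^{0,2}$ and $(p^{\ast}(df)\wedge\xi)^{0,2}$ is sign-sensitive and the flat model does pin the conventions down. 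The genuine gap is in the regularity half. Your frame-based argument (``$J$ is as regular as $\xi$,'' the Banach-algebra/inversion remark) only operates where the coframe $\{p^{\ast}dx, p^{\ast}dy, p^{\ast}\alpha, \xi\}$ is nondegenerate, i.e.\ on $U_4\setminus\{0\}$: all four of these forms vanish at the origin, so nothing in that argument controls $J$ near $0$, and the statement $J\in L^p_{2,\mathrm{loc}}$ \emph{on all of $U_4$} is exactly what remains unproved. You acknowledge this and defer to an ``expected'' rescaling argument plus a low-regularity Newlander--Nirenberg theorem at the degenerate point, but that deferred step is the actual content of the lemma, so the proposal is incomplete precisely where it matters.

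The paper closes this gap without any rescaling or blow-up. It introduces the model structure $J_0$ determined by $J_0(-\xi_0) := -p^{\ast}dx^1$, $J_0(p^{\ast}dx^2) := -p^{\ast}dx^3$, where $\xi_0 = 2(-y_1dx_1 + x_1dy_1 + y_2dx_2 - x_2dy_2)$, observes that $J_0$ is the canonical complex structure of $\mathbb{R}^4=\mathbb{C}^2$ and hence smooth across the origin, and then estimates the difference $J - J_0$ directly from the expansions of Lemma \ref{Lem:prepare of Pauly condition} (namely $\xi = \xi_0 + O(r_4^2)$ and the normal-coordinate normalization of $g$): the difference lies in $L^p_{2,\mathrm{loc}}$ for every $p\in[1,\infty)$, which simultaneously defines the extension of $J$ across $0$ and delivers its Sobolev class. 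Formal integrability then extends across the puncture for free, since for $p>4$ one has $L^p_2\subset C^1$, so the Nijenhuis tensor is continuous and vanishes on the complement of a point. Note also that the paper does not invoke any Newlander--Nirenberg theorem inside this lemma; the passage from the rough integrable $J$ to holomorphic coordinates via Hill--Taylor is the separate Corollary \ref{Cor:hol. coord. on Hopf lift}. Your proof should be restructured around this comparison with $J_0$: it is the device you gesture at (``comparing $J$ with the flat model'') but never carry out, and it replaces both the rescaling argument and the analysis of the degenerate point that your proposal leaves open.
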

			\begin{proof}
				We can check vanishing of the Nijenhuis tensor of $J$ by an easy calculation,
				and hence $J$ is integrable.
				Set a $1$-form $\xi_0:=2(-y_1dx_1 + x_1dy_1 + y_2dx_2 - x_2dy_2 )$ and take an almost complex structure $J_0$ on $U_4$ to be 
				\[
					\left\{
						\begin{array}{ll}
						J_0(-\xi_0) &:= -p^{\ast}dx^1\\
						J_0(p^{\ast}dx^2) &:= -p^{\ast}dx^3.
					\end{array}
					\right.
				\]
				Then $J_0$ agrees with the canonical complex structure on $\mathbb{R}^4$.
				Hence $J_0$ is of $C^{\infty}$-class on $U_4$.
				By some calculation the difference $J_0 - J$ is of $L^p_{2,\mathrm{loc}}$-class for any $p\in[1,\infty)$, which proves the lemma.
			\end{proof}
			By the result in \cite{Ref:Hil-Tay},
			we obtain the following corollary.
			\begin{Cor}\label{Cor:hol. coord. on Hopf lift}
				There exists a holomorphic coordinate $(w_1,w_2)$ on $(U_4,J)$ such that satisfies the following:
				\begin{itemize}
					\item 
						The coordinates $w_1$ and $w_2$ are of $L^p_{3,\mathrm{loc}}$-class for any $p\in[1,\infty)$.
					\item 
						We have $\theta\cdot w_1 = e^{\sqrt{-1}\theta}w_1$ and $\theta\cdot w_2 = e^{-\sqrt{-1}\theta}w_2$ for any $\theta\in S^1$.
					\item
						The equality $p^{\ast}z=w_1w_2$ holds.
				\end{itemize}
			\end{Cor}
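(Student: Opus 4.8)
The plan is to deduce the corollary from the low-regularity Newlander--Nirenberg theorem of \cite{Ref:Hil-Tay} applied to the integrable almost complex structure $J$ of the previous lemma, and then to refine the resulting chart so that it becomes $S^1$-equivariant and compatible with $p^{\ast}z$. Two preliminary observations drive the argument. First, $J$ is $S^1$-invariant: its defining relations express it through $\xi$ and the pullbacks $p^{\ast}\alpha, p^{\ast}dx, p^{\ast}dy$, all of which are $S^1$-invariant (for $\xi$ this follows from Lemma \ref{Lem:prepare of Pauly condition}, since $\omega = p^{\ast}f\cdot\xi$ and $p^{\ast}f$ are $S^1$-invariant). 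Hence the Hopf $S^1$-action is holomorphic on $(U_4, J)$ and fixes $0$. Second, $p^{\ast}z$ is a holomorphic, $S^1$-invariant function: from $J(p^{\ast}dx) = -p^{\ast}dy$ one gets $J(p^{\ast}dy)=p^{\ast}dx$, so $p^{\ast}dz = p^{\ast}dx + \sqrt{-1}p^{\ast}dy$ is of type $(1,0)$ and $\deebar_J(p^{\ast}z) = (p^{\ast}dz)^{0,1} = 0$, while $S^1$-invariance is immediate from $p\comp(\theta\cdot) = p$. Since $dp|_0 = 0$, the function $p^{\ast}z$ vanishes to second order at $0$, with quadratic part the nondegenerate hyperbolic form inherited from the $z$-component $2z_1z_2$ of the Hopf map.

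By \cite{Ref:Hil-Tay}, since $J$ is integrable and of $L^p_{2,\mathrm{loc}}$-class, there exist local $J$-holomorphic coordinates of $L^p_{3,\mathrm{loc}}$-class near $0$; the gain of one derivative is exactly what yields the $L^p_{3,\mathrm{loc}}$-regularity asserted in the corollary. To make them equivariant, I would average against the characters of $S^1$: for a holomorphic function $\eta$ set $\Pi_k\eta := \frac{1}{2\pi}\int_0^{2\pi}e^{-\sqrt{-1}k\theta}(\theta^{\ast}\eta)\,d\theta$. As each $\theta$ acts holomorphically, $\Pi_k\eta$ is again holomorphic and satisfies $\theta\cdot\Pi_k\eta = e^{\sqrt{-1}k\theta}\Pi_k\eta$. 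The linearization of the Hopf action on $T_0 U_4$ is the standard representation with weights $+1$ and $-1$, and $J|_0$ commutes with it; hence the weight-$(+1)$ and weight-$(-1)$ parts of the differentials of the Hill--Taylor coordinates together span $(T_0^{\ast}U_4)^{1,0}$. Extracting these parts via $\Pi_{+1}$ and $\Pi_{-1}$ produces holomorphic functions $w_1^0, w_2^0$ of weights $+1, -1$ whose differentials are linearly independent at $0$, i.e. a $J$-holomorphic coordinate system of $L^p_{3,\mathrm{loc}}$-class.

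It remains to arrange $p^{\ast}z = w_1 w_2$. Expressed in the coordinates $(w_1^0, w_2^0)$, the holomorphic function $p^{\ast}z$ is a genuine holomorphic function of two complex variables, hence analytic; its $S^1$-invariance forces it to depend only on the invariant monomial $u := w_1^0 w_2^0$, say $p^{\ast}z = H(u)$ with $H$ analytic and $H(0)=0$. The nondegeneracy of the quadratic part noted above gives $H'(0)\neq 0$, so $h(u) := H(u)/u$ is analytic with $h(0)\neq 0$. Setting $w_1 := w_1^0\cdot h(w_1^0 w_2^0)$ and $w_2 := w_2^0$ preserves the weights $+1, -1$, keeps the Jacobian at $0$ invertible (it is $\mathrm{diag}(h(0),1)$), remains of $L^p_{3,\mathrm{loc}}$-class, and yields $w_1 w_2 = u\,h(u) = H(u) = p^{\ast}z$, as required.

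The substantive analytic input is the rough Newlander--Nirenberg theorem of \cite{Ref:Hil-Tay}, which I would simply cite. The two points needing care in our own argument are the nondegeneracy of the averaged coordinates $(w_1^0, w_2^0)$ at $0$ --- which reduces to identifying the tangential $S^1$-weights as $\pm 1$ and checking that $J|_0$ respects the weight splitting --- and the verification that the normalization $w_1 = w_1^0 h(w_1^0 w_2^0)$ does not destroy the $L^p_{3,\mathrm{loc}}$-regularity. The latter is routine once $p$ is taken large enough that $L^p_3$ embeds in $C^2$ and forms a Banach algebra closed under composition with analytic functions (the case of smaller $p$ then follows on the bounded chart); the former is the only place where the specific geometry of the Hopf fibration genuinely enters, and it is the step I expect to require the most attention.
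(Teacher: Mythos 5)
Your proposal is correct and follows essentially the same route as the paper: invoke the Hill--Taylor theorem \cite{Ref:Hil-Tay} for $L^p_{3,\mathrm{loc}}$ holomorphic coordinates, use that the $S^1$-weights of $\Omega^{1,0}|_0$ are $(1,-1)$ to make the chart equivariant, and then use that $p^{\ast}z$ is holomorphic, $S^1$-invariant, and of order $2$ at the origin to normalize $p^{\ast}z=w_1w_2$. The paper's proof is a three-sentence sketch of exactly this argument; your character-averaging projections and the $h(u)=H(u)/u$ renormalization are the details it leaves implicit.
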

			\begin{proof}
				Since the $S^1$-weight of $\Omega^{1,0}_{\mathbb{C}^2}|_{0}$ is $(1,-1)$.
				Hence we can take a holomorphic local chart $w_1,\,w_2$ that $\theta\cdot w_1 = e^{\sqrt{-1}\theta}w_1$ and $\theta\cdot w_2 = e^{-\sqrt{-1}\theta}w_2$ for any $\theta\in S^1$.
				Since $p^{\ast}z$ is a holomorphic function and $S^1$-invariant and of order $2$ at origin,
				we may assume $p^{\ast}z=w_1\cdot w_2$.
			\end{proof}
			Set $U_{+} := U \setminus ((-\eps,0] \times \{0\})$ and $U_{-}:= U \setminus ([0,\eps)\times \{0\})$.
			For Proposition \ref{Prp:the lift of mini-hol. bundle},
			we prepare the following lemma.
			\begin{Lem}\label{Lem:pullback of zero locus}
				We have $\mathrm{Zero}(w_1) = p^{\ast}((-\eps,0]\times \{0\})$ and $\mathrm{Zero}(w_2) = p^{\ast}([0,\eps)\times \{0\})$.
			\end{Lem}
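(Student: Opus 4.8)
The plan is to read off both zero loci from the identity $p^{\ast}z=w_1w_2$ of Corollary \ref{Cor:hol. coord. on Hopf lift} together with an $S^1$-weight count at the fixed point $0$. First I would note that $\mathrm{Zero}(z)=I\times\{0\}$ in $U$, so the explicit Hopf map gives $p^{-1}(I\times\{0\})=\{(z_1,z_2):z_1z_2=0\}=\{z_1=0\}\cup\{z_2=0\}$, the union of the two coordinate axes meeting at $0$; here $z_1,z_2$ denote the standard complex coordinates on $\mathbb{R}^4=\mathbb{C}^2$ used to define $p$, not the $w_i$. Since $p^{\ast}z=w_1w_2$, this yields $\mathrm{Zero}(w_1)\cup\mathrm{Zero}(w_2)=\mathrm{Zero}(p^{\ast}z)=\{z_1=0\}\cup\{z_2=0\}$. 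As $(w_1,w_2)$ is a holomorphic coordinate system, each $\mathrm{Zero}(w_i)=\{w_i=0\}$ is a smooth irreducible hypersurface through $0$ (of the regularity of the coordinates), so $\mathrm{Zero}(w_1)$ and $\mathrm{Zero}(w_2)$ are precisely the two branches $\{z_1=0\}$, $\{z_2=0\}$ of this nodal curve; only the labelling remains to be fixed.

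To match the labels I would work at the fixed point $0$. The complex structure $J$ is $S^1$-invariant, and by the normalizations of the normal coordinate ($\alpha|_0=dx^1|_0$, $dz|_0=(dx^2+\sqrt{-1}dx^3)|_0$, $\alpha_x(0)=\alpha_y(0)=0$) together with the estimate $\xi=\xi_0+O(r_4^2)$ of Lemma \ref{Lem:prepare of Pauly condition}, its defining relations reduce at $0$ to those of $J_0$; hence $J|_0=J_0|_0$ and the $J$-holomorphic cotangent space $\Omega^{1,0}_{J,0}$ is spanned by $dz_1|_0$ and $dz_2|_0$, which under the $S^1$-action carry weights $+1$ and $-1$ respectively. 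Because $\theta\cdot w_1=e^{\sqrt{-1}\theta}w_1$, the nonzero covector $dw_1|_0$ has weight $+1$, so it is a nonzero multiple of $dz_1|_0$; consequently $T_0\mathrm{Zero}(w_1)=\ker(dw_1|_0)=\ker(dz_1|_0)=T_0\{z_1=0\}$, and likewise $T_0\mathrm{Zero}(w_2)=T_0\{z_2=0\}$. Since a smooth branch of the nodal curve is determined by its tangent line at $0$, this forces $\mathrm{Zero}(w_1)=\{z_1=0\}$ and $\mathrm{Zero}(w_2)=\{z_2=0\}$.

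Finally I would translate back through $p$: on $\{z_1=0\}$ one has $p(0,z_2)=(-|z_2|^2,0)$, so $\{z_1=0\}=p^{-1}((-\eps,0]\times\{0\})$, and on $\{z_2=0\}$ one has $p(z_1,0)=(|z_1|^2,0)$, so $\{z_2=0\}=p^{-1}([0,\eps)\times\{0\})$, which is the assertion. The only genuinely nontrivial step is the label-matching: a priori the factorization $p^{\ast}z=w_1w_2$ does not say which factor cuts out which axis, and this is exactly what the weight computation at the fixed point settles. The mild regularity of $(w_1,w_2)$ and $J$ (only $L^p_{3,\mathrm{loc}}$ and $L^p_{2,\mathrm{loc}}$) is harmless here, since the argument uses only the values of $J$, $dw_i$ and $\xi$ at the single point $0$, where all relevant quantities are continuous by Sobolev embedding for $p$ large.
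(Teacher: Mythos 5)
Your proposal reaches the correct conclusion, and its first half coincides with the paper's: both deduce $\mathrm{Zero}(w_1)\cup\mathrm{Zero}(w_2)=p^{\ast}((-\eps,\eps)\times\{0\})$ from $p^{\ast}z=w_1w_2$, and both use a connectedness/smoothness argument to see that each $\mathrm{Zero}(w_i)$ is one of the two axes, so that only the labelling is at issue. Where you genuinely diverge is in how the $w_1\leftrightarrow w_2$ symmetry is broken. The paper does this globally and entirely away from the singular point: assuming the swapped labelling, it descends an $S^1$-equivariant line bundle of weight $1$ from $U_4\setminus\{0\}$ to $U\setminus\{0\}$ and computes the degree of the quotient bundle twice, once with the frames coming from $z_1^{-1}b,\ z_2 b$ (degree $1$) and once with those coming from $w_1^{-1}b,\ w_2 b$ (degree $-1$), a contradiction. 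You instead argue infinitesimally at the fixed point: $J|_0=J_0|_0$, the weight-$(+1)$ part of $\Omega^{1,0}_{J}|_0$ is $\mathbb{C}\,dz_1|_0$, and $dw_1|_0$ lies in it, so $T_0\mathrm{Zero}(w_1)=T_0\{z_1=0\}$. This is a legitimate alternative; its advantage is brevity and transparency, while the paper's advantage is that it only ever uses the structures on the punctured ball, where $J$ and the $w_i$ are honest smooth/holomorphic objects rather than Sobolev extensions.

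The one step you must shore up is the claim $J|_0=J_0|_0$, on which your whole argument rests. You justify it by saying the defining relations of $J$ ``reduce at $0$ to those of $J_0$,'' but at the origin those relations are vacuous: $p$ is quadratic, so $dp|_0=0$ and hence $p^{\ast}\alpha$, $p^{\ast}dx$, $p^{\ast}dy$ all vanish at $0$, and $\xi|_0=0$ as well, since $\xi=\xi_0+O(r_4^2)$ and $\xi_0$ vanishes at the origin. So one cannot simply evaluate the relations at $0$; one must show $(J-J_0)|_x\to 0$ as $x\to 0$ and invoke the continuity of $J$ provided by Sobolev embedding. This does follow from the estimates you cite: at $|x|=r$ the coframe $\{\xi_0,p^{\ast}dx^1,p^{\ast}dx^2,p^{\ast}dx^3\}$ has norms comparable to $r$ and is uniformly non-degenerate after rescaling by $r^{-1}$, while the defining relations together with $\xi-\xi_0=O(r^2)$, $p^{\ast}(\alpha-dt)=O(r^3)$ (from $\alpha_x(0)=\alpha_y(0)=0$) and $p^{\ast}(dx-dx^2),\,p^{\ast}(dy-dx^3)=O(r^3)$ give that $(J-J_0)$ applied to these coframe elements is $O(r^2)$, whence the operator norm of $(J-J_0)|_x$ is $O(r)$. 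Note that this identity is not cosmetic: the weight-$(+1)$ eigenspace of the full complexified cotangent space at $0$ is spanned by $dz_1|_0$ \emph{and} $d\bar{z}_2|_0$, and the swapped labelling corresponds exactly to $dw_1|_0$ proportional to $d\bar{z}_2|_0$; it is only the identification $\Omega^{1,0}_J|_0=\Omega^{1,0}_{J_0}|_0$ that excludes this possibility. With that supplement, your proof is complete.
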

			\begin{proof}
				We have $p^{\ast}((-\eps,\eps)\times \{0\}) = \mathrm{Zero}(p^{\ast}z)=\mathrm{Zero}(w_1)\cup \mathrm{Zero}(w_2)$ because $p^{\ast}z = w_1w_2$.
				Hence $p^{\ast}((-\eps,0]\times \{0\} \sqcup [0,\eps)\times \{0\}) = (\mathrm{Zero}(w_1)\setminus\{(0,0)\}) \sqcup (\mathrm{Zero}(w_2)\setminus\{(0,0)\})$.
				Since the image of a connected space by a continuous map is connected,
				we obtain $\mathrm{Zero}(w_1) = p^{\ast}((-\eps,0]\times \{0\})$ and $\mathrm{Zero}(w_2) = p^{\ast}([0,\eps)\times \{0\})$, or $\mathrm{Zero}(w_1) = p^{\ast}([0,\eps)\times \{0\})$ and $\mathrm{Zero}(w_2) = p^{\ast}((-\eps,0]\times \{0\})$.
				We assume the latter one.
				Let $L$ be an $S^1$-equivariant line bundle on $U_4$ such that the weight of $L|_{0}$ is $1$.
				Let $b$ be a frame of $L$ such that $\theta\cdot b = e^{\sqrt{-1}\theta}b$.
				We denote by $\tilde{L}$ the quotient of $L|_{U_4 \setminus\{0\}}$.
				On one hand,
				for the canonical coordinate $z_1,z_2$ on $\mathbb{C}^2$,
				the descent of the sections $(z_1)^{-1}\cdot b$ on $U_4 \setminus \mathrm{Zero}(z_1)$ and $z_2\cdot b$ on $U_4 \setminus \mathrm{Zero}(z_2)$ give frames of $\tilde{L}$ on $U_{+}$ and $U_{-}$ respectively.
				By using this frame, the degree of $\tilde{L}$ is calculated as $1$.
				On the other hand,
				the descent of the frame $(w_1)^{-1}\cdot b$ on $U_4 \setminus \mathrm{Zero}(w_1)$ and $w_2\cdot b$ on $U_4 \setminus \mathrm{Zero}(w_2)$ also gives frames of $\tilde{L}$ on $U_{-}$ and $U_{+}$ respectively.
				Then the degree of $\tilde{L}$ is calculated as $-1$,
				which is a contradiction.
				Therefore we obtain $\mathrm{Zero}(w_1) = p^{\ast}((-\eps,0]\times \{0\})$,  $\mathrm{Zero}(w_2) = p^{\ast}([0,\eps)\times \{0\})$.
			\end{proof}
		
			\begin{Remark}\ \,
				\begin{itemize}
					\item 
						If $p>4$,
						we have $L^p_i \subset C^{\,i-1,(p-4)/p}$.
					\item
						The tuple $(U_4,J,g_4)$ is not a K\"{a}hler manifold in general.
				\end{itemize}
			\end{Remark}
			
			We mention the lift of Dirac-type singular mini-holomorphic bundles by the Hopf fibration.
			Let $(E,\deebar_E)$ be a Dirac-type singular mini-holomorphic bundle on $(U,\{(0,0)\})$ of weight $\vec{k}=(k_i)\in\mathbb{Z}^r$.
			We set a holomorphic structure $\deebar_{E_4}:\Omega^{0,0}(E_4)\to \Omega^{0,1}(E_4)$ on $E_4 := p^{\ast}E$ as follows: \[\deebar_{E_4}(p^{\ast}v):= (p^{\ast}\deebar_{E_4}(v))^{0,1},\]
			where $v$ is a local section of $E$.
			Take mini-holomorphic frames $\mb{e}_{\pm}=(e_{\pm,i})$ of $E$ on $U_{\pm}$ such that we have $e_{+,i}=z^{k_i}\cdot e_{-,i}$.
			
			By Lemma \ref{Lem:pullback of zero locus},
			we get a frame $\mb{e}_4 = (e_{4,i})$ of $E_4$ on $U_4 \setminus\{(0,0)\}$ given as follows:
			\[
				e_{4,i}=
				\left\{
				\begin{array}{ll}
					w_1^{k_i}\cdot p^{\ast}(e_{+,i})  &\;(w_1 \neq 0)\\
					w_2^{-k_i}\cdot p^{\ast}(e_{-,i}) &\;(w_2 \neq 0).
				\end{array}
				\right.
			\]
			We extend $E_4$ over $U_4$ by this frame.
			Then the weight of $E_4|_{0}$ is $\vec{k}=(k_i)\in\mathbb{Z}^r$ because $\theta \cdot e_{4,i} = e^{\sqrt{-1}k_i}\cdot e_{4,i}$.
			Summarizing the above argument,
			we obtain the following proposition.
			\begin{Prp}\label{Prp:the lift of mini-hol. bundle}
				For the Dirac-type singular mini-holomorphic bundle $(E,\deebar_E)$ of weight $\vec{k}\in\mathbb{Z}^r$ and the mini-holomorphic frames $\mb{e}_{\pm}$ on $U_{\pm}$,
				the lift $E_4:=p^{\ast}E$ has the natural $S^1$-equivariant prolongation over $U_4$ and the $S^1$-weight of $E_4|_{0}$ is $\vec{k}$.
			\end{Prp}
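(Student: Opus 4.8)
The plan is to upgrade the frame $\mb{e}_4=(e_{4,i})$ written down just before the statement into an honest holomorphic frame of $E_4$ on a full neighbourhood of the origin, and then to read the $S^1$-representation on $E_4|_0$ off the eigen-behaviour of this frame. First I would record that the two regions carrying the two formulas for $e_{4,i}$ cover the punctured neighbourhood. By Lemma \ref{Lem:pullback of zero locus}, $U_4\setminus\mathrm{Zero}(w_1)=\{w_1\neq 0\}=p^{-1}(U_{+})$ and $U_4\setminus\mathrm{Zero}(w_2)=\{w_2\neq 0\}=p^{-1}(U_{-})$; since $\mathrm{Zero}(w_1)\cap\mathrm{Zero}(w_2)=p^{-1}(0)=\{0\}$, these two open sets exhaust $U_4\setminus\{0\}$, with overlap $\{w_1w_2\neq 0\}=p^{-1}(U_{+}\cap U_{-})$.

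Next I would verify that the two formulas agree on the overlap, so that $\mb{e}_4$ is single-valued on $U_4\setminus\{0\}$. Because the $e_{\pm,i}$ are mini-holomorphic frames, their pullbacks $p^{\ast}(e_{\pm,i})$ satisfy $\deebar_{E_4}(p^{\ast}(e_{\pm,i}))=0$, and because the $w_j$ are holomorphic for $J$ (Corollary \ref{Cor:hol. coord. on Hopf lift}) each of $w_1^{k_i}p^{\ast}(e_{+,i})$ and $w_2^{-k_i}p^{\ast}(e_{-,i})$ is a holomorphic section of $(E_4,\deebar_{E_4})$ on its region. Substituting the scattering relation between $\mb{e}_{+}$ and $\mb{e}_{-}$ supplied by Definition \ref{Def:alg. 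Dirac-type sing.} and using $p^{\ast}z=w_1w_2$, the powers of $w_1$ and $w_2$ cancel, so the two expressions represent the same section on $\{w_1w_2\neq 0\}$. Thus $\mb{e}_4$ is a holomorphic frame of $E_4$ over $U_4\setminus\{0\}$.

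I would then define the prolongation of $E_4$ across the origin to be the locally free $\mathcal{O}_{(U_4,J)}$-module generated by $e_{4,1},\dots,e_{4,r}$; by construction it restricts to $E_4$ on $U_4\setminus\{0\}$ and is the natural candidate. I expect this prolongation step to be the main obstacle. The Hopf chart $(w_1,w_2)$ and the complex structure $J$ are only of Sobolev class ($w_j\in L^p_{3,\mathrm{loc}}$ and $J\in L^p_{2,\mathrm{loc}}$), so one must check that declaring $\mb{e}_4$ holomorphic at $0$ really produces a holomorphic vector bundle in this reduced regularity, and that the result is independent of the auxiliary data (the mini-holomorphic frames $\mb{e}_{\pm}$ and the coordinates $w_j$). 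The key point making this work is that the change of frame back to the a priori holomorphic bundle $p^{\ast}E$ is effected by the genuinely holomorphic factors $w_1^{k_i}$, so $\mb{e}_4$ extends holomorphically through the codimension-two locus $\{0\}$ and the bundle prolongs.

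Finally I would extract the equivariance and the weight. Since $p$ is $S^1$-invariant, $E_4=p^{\ast}E$ carries a canonical $S^1$-equivariant structure in which every $p^{\ast}(e_{\pm,i})$ is invariant; combining this with $\theta\cdot w_1=e^{\sqrt{-1}\theta}w_1$ from Corollary \ref{Cor:hol. coord. on Hopf lift} gives $\theta\cdot e_{4,i}=e^{\sqrt{-1}k_i\theta}e_{4,i}$. Hence $\mb{e}_4$ diagonalises the $S^1$-action, the prolongation is $S^1$-equivariant, and evaluating at the origin---where $\mb{e}_4$ restricts to a basis of $E_4|_0$---shows that $S^1$ acts on $E_4|_0$ with weights $(k_i)$, i.e. the weight of $E_4|_0$ is $\vec{k}$. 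The real content is therefore the consistency of the gluing and the regularity of the prolongation; once the frame is in place the equivariance and the weight are immediate.
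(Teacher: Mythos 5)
Your proposal is correct and takes essentially the same route as the paper, whose own ``proof'' is exactly the construction preceding the statement: cover $U_4\setminus\{0\}$ by the two charts supplied by Lemma~\ref{Lem:pullback of zero locus}, glue $w_1^{k_i}\cdot p^{\ast}(e_{+,i})$ with $w_2^{-k_i}\cdot p^{\ast}(e_{-,i})$ using the scattering relation and $p^{\ast}z=w_1w_2$, extend $E_4$ over the origin by this frame, and read off the weight from $\theta\cdot w_1=e^{\sqrt{-1}\theta}w_1$ together with the $S^1$-invariance of pulled-back sections. Your extra care about the overlap consistency (where, note, the exponent conventions in the paper's frame formula and the relation $e_{+,i}=z^{k_i}e_{-,i}$ must be matched up to a sign) and about the low Sobolev regularity of $(w_1,w_2)$ and $J$ only makes explicit what the paper leaves implicit.
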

			For the proof of Theorem \ref{Thm:ex. of BHE-met.},
			we prove the following lemma.
			\begin{Lem}\label{Lem:rest. of mini-hol. is regular}
				Let $(E,\deebar_E)$ be a Dirac-type singular mini-holomorphic bundle on $(U,\{(0,0)\})$ of rank $r$ and $(F,\deebar_F)$ be a mini-holomorphic subbundle of $(E,\deebar_E)$ of rank $r'$.
				Then the lift $F_4$ of $F$ in Proposition \ref{Prp:the lift of mini-hol. bundle} is a holomorphic subbundle of the lift $E_4$ of $E$.
			\end{Lem}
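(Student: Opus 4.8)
The plan is to first upgrade $F$ to a Dirac-type singular bundle (so that its lift is even defined), then lift the inclusion away from the fixed point, extend it over the fixed point by an explicit frame computation, and finally check the subbundle property at the fixed point; this last step is the crux. For \textbf{Step 1 ($F$ is Dirac-type singular)}, note that before $F_4$ is defined through Proposition \ref{Prp:the lift of mini-hol. bundle} one must know that $(F,\deebar_F)$ has an algebraic Dirac-type singularity at $(0,0)$. The sections $s_i(z)=((-1)^i\eps/2,z)$ avoid $(0,0)$, so $(s_i)^{\ast}F\subset (s_i)^{\ast}E$ are honest holomorphic subbundles on all of $W$. Because the inclusion $\phi:F\hookrightarrow E$ satisfies $\deebar_E\comp\phi=\phi\comp\deebar_F$, the parallel transport defining the scattering map preserves $F$; hence $\Psi_{s_1,s_2}$ for $F$ is the restriction of $\Psi_{s_1,s_2}$ for $E$ to these subbundles. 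As the latter prolongs to a meromorphic isomorphism, so does the former, and $(0,0)$ is an algebraic Dirac-type singularity of $F$ (alternatively via the criterion of Proposition \ref{Prp:A criterion for Dirac-typeness}). Write $\vec{k}=(k_i)\in\mathbb{Z}^r$ and $\vec{k}'=(k'_j)\in\mathbb{Z}^{r'}$ for the weights of $E$ and $F$.

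For \textbf{Steps 2--3 (lift and extend the inclusion)}, observe that on $U_4\setminus\{0\}$ the holomorphic structures $\deebar_{E_4},\deebar_{F_4}$ are defined by $p^{\ast}$, so $p^{\ast}\phi$ is a holomorphic subbundle inclusion $F_4\hookrightarrow E_4$ there. To extend it over $0$, I would use weight-adapted mini-holomorphic frames: write $f_{+,j}=\sum_i A_{ij}e_{+,i}$ and $f_{-,j}=\sum_i B_{ij}e_{-,i}$ with $A,B$ holomorphic in $z$ and of full column rank $r'$ at $z=0$ (these two rank conditions are exactly the saturations of $F$ in $E$ on the two sides). The weight relations force $B_{ij}=z^{k'_j-k_i}A_{ij}$, and a direct computation in the lifted frames $\mb{e}_4,\mb{f}_4$ of Proposition \ref{Prp:the lift of mini-hol. bundle}, using $p^{\ast}z=w_1w_2$ from Corollary \ref{Cor:hol. coord. on Hopf lift} and Lemma \ref{Lem:pullback of zero locus}, gives $f_{4,j}=\sum_i C_{ij}e_{4,i}$ with
\[
  C_{ij}=w_1^{\,\mathrm{ord}_z(B_{ij})}\,w_2^{\,\mathrm{ord}_z(A_{ij})}\,u_{ij},
\]
where $u_{ij}$ is a holomorphic unit (and $C_{ij}=0$ when $A_{ij}=0$). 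Since $A,B$ are holomorphic, both exponents are $\ge 0$, so each $C_{ij}$ is holomorphic with respect to $J$ on all of $U_4$, and $p^{\ast}\phi$ extends to a holomorphic bundle map $\phi_4:F_4\to E_4$ over $U_4$.

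Finally, \textbf{Step 4 (subbundle at the fixed point)} is where I expect the main obstacle. Since $\phi_4$ is fiberwise injective on the dense open set $U_4\setminus\{0\}$, the only thing left is that $\phi_4|_0$ still has rank $r'$. Reading off constant terms, $C_{ij}(0)\ne 0$ forces both orders to vanish, i.e.\ $k_i=k'_j$ and $A_{ij}(0)\ne 0$, so $\phi_4|_0$ is precisely the $S^1$-weight-preserving part of $A(0)$. Here injectivity of $A(0)$ alone is not enough, since an injective filtered map may have non-injective associated graded; the resolution must use \emph{both} saturations at once. In lattice terms, with $L^E_-=\mathrm{diag}(z^{k_i})L^E_+$, the $S^1$-weight decomposition of $E_4|_0$ is the associated graded of the filtration $L^E_+\cap z^{-\bullet}L^E_-$, and likewise for $F$; because $L^F_\pm$ are saturated in $L^E_\pm$, the induced map on associated gradeds --- which is exactly $\phi_4|_0$ --- is injective. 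Equivalently, the two saturations let one choose $\mb{e}_+$ to extend $\mb{f}_+$, turning $A$ into the block $\bigl(\begin{smallmatrix} I_{r'}\\ 0\end{smallmatrix}\bigr)$ and exhibiting $\phi_4$ as the inclusion of a sub-frame. Either way $\phi_4|_0$ has rank $r'$, hence $\phi_4$ is fiberwise injective with locally free cokernel everywhere, and $F_4$ is a holomorphic subbundle of $E_4$. I expect this step to be the genuine difficulty precisely because the rank at the fixed point cannot be controlled by genericity and must instead be forced by the compatibility of the two lattices.
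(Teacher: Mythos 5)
Your Steps 1--3 are correct, and are in fact \emph{more} careful than the paper's own proof, which follows the same outline (express $\mb{f}_{+}$ in $\mb{e}_{+}$, transport the relation to the minus side, lift, and reduce everything to the rank at the origin of a coefficient matrix). At the transport step the paper asserts that the coefficients $a^i_j$ in $f_{+,j}=\sum_i a^i_j e_{+,i}$ vanish unless $k_i=k'_j$; this is a non sequitur, since the identity $f_{-,j}=\sum_i a^i_j z^{k_i-k'_j}e_{-,i}$ only forces $\mathrm{ord}_z(a^i_j)\geq\max(0,k'_j-k_i)$ --- exactly the order conditions you derive in Step 3. The genuine gap is where you predicted it, in Step 4, and it cannot be repaired: the claim that saturation of $L^F_{\pm}$ in $L^E_{\pm}$ forces injectivity on associated gradeds is false, and with it the Lemma itself. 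Take $E$ of rank $3$ with transition $e_{+,i}=z^{k_i}e_{-,i}$, $\vec{k}=(2,1,0)$, and let $F$ be the line subbundle spanned by
\[
f_{+}=e_{+,1}+z\,e_{+,3}\ \text{ on }U_{+},
\qquad
f_{-}=z\,e_{-,1}+e_{-,3}\ \text{ on }U_{-}.
\]
These glue, because $f_{+}=z^{2}e_{-,1}+z\,e_{-,3}=z\,f_{-}$, so $F$ is a mini-holomorphic subbundle of weight $k'=1$, and \emph{both} saturations hold, since $f_{\pm}$ are nonvanishing along $z=0$ on their respective sides: in your notation $A=(1,0,z)^{T}$ and $B=(z,0,1)^{T}$ have full rank at $z=0$. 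Yet your own Step 3 formula gives $C=(w_1,0,w_2)^{T}$ (up to the harmless $w_1\leftrightarrow w_2$ ambiguity in the paper's sign conventions), i.e.\ $f_{4}=w_1e_{4,1}+w_2e_{4,3}$, which vanishes at the origin; so $\phi_4|_{0}=0$ and $F_4$ is not a subbundle of $E_4$ there.

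You can see this failure without any frame computation: by Lemma \ref{Lem:pullback of zero locus} the two half-axes $\{t>0,z=0\}$ and $\{t<0,z=0\}$ lift to the two coordinate axes of $U_4$, and along them $p^{\ast}F$ equals $\underline{\mathbb{C}}e_{4,1}$ and $\underline{\mathbb{C}}e_{4,3}$ respectively, so any rank-one subbundle of $E_4$ extending $p^{\ast}F$ would need its single fibre at $0$ to contain the two distinct lines $\mathbb{C}e_{4,1}(0)$ and $\mathbb{C}e_{4,3}(0)$. Equivalently, by $S^1$-equivariance $\phi_4|_0$ can only pair equal weights, and the weight-preserving entry here is $a^2_1(0)=0$: this is precisely the phenomenon you worried about (injective filtered map, zero graded map), and it actually occurs. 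Your fallback also fails: one can extend $\mb{f}_{+}$ to a frame of $E$ on $U_{+}$, but that frame is not weight-adapted --- no partner frame on $U_{-}$ with transition $\mathrm{diag}(z^{k_i})$ exists for it --- so the reduction of $A$ to $\bigl(\begin{smallmatrix}I_{r'}\\ 0\end{smallmatrix}\bigr)$ is unavailable. In short: your instinct that Step 4 is the crux was right, but no argument from the two saturations alone can close it, because the statement is false as it stands (the paper's erroneous vanishing claim is exactly what would exclude the example above); a correct version needs either a stronger hypothesis (near the singularity, $F$ is a direct summand compatible with a weight-adapted frame of $E$) or a weaker conclusion ($\phi_4$ is an injective sheaf map, a subbundle inclusion only off the origin). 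The same example shows the Remark after Definition \ref{Def:adm. met. on mini-hol.} is problematic: for the admissible $h$ descending the metric in which $\mb{e}_4$ is orthonormal, $|f_4|^2_{p^{\ast}h}=|w_1|^{2}+|w_2|^{2}$ degenerates at $0$, so $h|_F$ is not admissible.
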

			\begin{proof}
				Let $\mb{e}=(e_{\pm,i})$ and $\mb{f}_{\pm}=(f_{\pm,j})$ be the mini-holomorphic frames of $E$ and $F$ on $U_{\pm}$ respectively such that there exist $\vec{k}=(k_i)\in\mathbb{Z}^r$ and $\vec{k'}=(k'_{i})\in\mathbb{Z}^{r'}$ such that $e_{+,i}=\sum_iz^{k_i}\cdot e_{-,i}$ and $f_{+,j}=z^{k'_j}\cdot f_{-,j}$ for any $i,j$.
				By shrinking $U$ if necessary,
				we may assume that for any $j\in\{1,\dots,r'\}$ there exist mini-holomorphic functions $(a^i_j)_{i=1}^r$ on $U_+$ such that $f_{j,+}=a^i_j\cdot e_{i.+}$.
				By the definition of mini-holomorphic functions,
				$a^i_j$ can be prolonged to whole $U$ uniquely.
				Since we have $f_{j,-}=\sum_i a^i_j\,z^{k_i-k'_j}\,e_{-,i}$,
				we obtain $a^i_j=0$ unless $k_i=k'_j$.
				Therefore we obtain $f_{j,\pm}=\sum_i a^i_je_{\pm,i}$.
				Let $\mb{e}_4=(e_{4,i})$ and $\mb{f}_4=(f_{4,j})$ be the holomorphic flames of $E_4$ and $F_4$ that is used in construction of $E_4$ and $F_4$ respectively.
				Then we have $f_{4,j}=\sum_i p^{\ast}(a^i_j)\cdot e_{4,i}$.
				Since $F_4|_{U_4\setminus\{(0,0)\}}$ is a subbundle of $E_4|_{U_4\setminus\{(0,0)\}}$,
				it suffices to show $\mathrm{rank}(a^i_j(0,0))=r'$.
				If we have $\mathrm{rank}(a^i_j(0,0))<r'$,
				then we have $\mathrm{rank}(a^i_j(-\eps,0))=\mathrm{rank}(a^i_j(0,0))<r'$ by the definition of mini-holomorphic functions.
				However,
				it contradicts to the assumption that $\mb{f}_{-}$ is a frame of $F|_{U_-}$.
			\end{proof}
		\subsubsection{HE-monopoles and underlying mini-holomorphic structures}
			Let $X$ be an oriented connected $3$-fold.
			Let $(\partial_{t},\alpha,\Sigma,\pi)$ be a mini-holomorphic structure on $X$.
			Let $g_{\Sigma}$ be a K\"{a}hler metric on $\Sigma$ and $\omega_{\Sigma}$ the K\"{a}hler form of $(\Sigma,g_{\Sigma})$.
			Set $g_X := \alpha^2 + \pi^{\ast}g_{\Sigma}$.
			We assume that $\alpha\wedge \pi^{\ast}\omega_{\Sigma}$ is positive-oriented.
			\begin{Def}\label{Def:HE-monopole}\ \,
				\begin{enumerate}[label=(\roman*)]
					\item
					Let $(V, h)$ be a Hermitian vector bundle with a unitary connection $A$ on $X$.
					Let $\Phi$ be a skew-Hermitian section of $\mathrm{End}(V)$.
					The tuple $(V, h, A, \Phi)$ is said to be a HE-monopole of degree $c\in\mathbb{R}$ on $X$ if it satisfies the Hermite-Einstein-Bogomolny equation $F(A) = \ast \nabla_A(\Phi) + \sqrt{-1}c\cdot(\pi^{\ast}\omega_{\Sigma})\mathrm{Id}_{V}$.
					\item\label{Enum_2: Def:def of monopoles}
					Let $Z \subset X$ be a discrete subset.
					Let $(V, h, A, \Phi)$ be a HE-monopole of rank $r \in \mathbb{N}$ on $X \setminus Z$.
					A point $p\in Z$ is called a Dirac-type singularity of the monopole $(V, h, A, \Phi)$ with weight $\vec{k}_p=(k_{p,i}) \in \mathbb{Z}^r$
					if the following holds.
					\begin{itemize}
						\item
						There exists a small neighborhood $B$ of $p$ such that
						$(V,h)|_{B\setminus\{p\}}$ is decomposed into a sum of Hermitian line bundles $\bigoplus_{i=1}^{r} F_{p,i}$
						with $\mathrm{deg}(F_{p,i})=\int_{\partial B} c_1(F_{p,i}) = k_{p,i}$.
						\item
						In the above decomposition, we have the following estimates,
						\begin{align*}
						\left\{
						\begin{array}{l}
						\Phi  = \displaystyle\frac{\sqrt{-1}}{2R_p}\sum_{i=1}^{r} k_{p,i}\cdot Id_{F_{p,i}} + O(1)\\
						\nabla_A(R_p\Phi) = O(1),
						\end{array}
						\right.
						\end{align*}
						where $R_p$ is the distance from $p$.
					\end{itemize}
					For a HE-monopole $(V,h,A,\Phi)$ on $X\setminus Z$,
					if each point $p\in Z$ is a Dirac-type singularity,
					then we call $(V,h,A,\Phi)$ a Dirac-type singular monopole on $(X,Z)$.
				\end{enumerate}
			\end{Def}
			In \cite{Ref:Pau},
			Pauly proved a characterization of Dirac-type singular monopoles using the Hopf fibration, and it remains valid for HE-monopoles.
			\begin{Thm}\label{Thm:Pauly's criterion}
				Let $U \subset \mathbb{R}_t\times\mathbb{C}_z$ be a neighborhood of $(0,0)$ and $(\partial_t,\alpha,W,\pi:U\to W)$ a mini-holomorphic structure on $U$.
				Let $(V,h,A,\Phi)$ be a HE-monopole on $U\setminus\{(0,0)\}$ of degree $c\in\mathbb{R}$.
				\begin{itemize}
					\item 
						The tuple $(V_4,h_4,A_4):=(p^{\ast}V,p^{\ast}h,p^{\ast}A-\xi\otimes p^{\ast}\Phi)$ is a Hermitian holomorphic bundle that satisfies the Hermite-Einstein condition of factor $c/p^{\ast}f$.
					\item
						The point $(0,0)$ is a Dirac-type singularity of the HE-monopole $(V,h,A,\Phi)$
						if and only if the tuple $(V_4,h_4,A_4)$ can be prolonged as $S^1$-invariant Hermitian holomorphic bundle over $U_4$.
						Moreover,
						the weight of $(V,h,A,\Phi)$ at $(0,0)$ agrees with the $S^1$-weight of $V_4|_0$.
				\end{itemize}
			\end{Thm}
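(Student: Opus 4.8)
The plan is to adapt Pauly's dimensional-reduction argument from \cite{Ref:Pau}, upgrading it to incorporate the Hermite-Einstein factor $c$ and to account for the fact that $(g_4,J)$ is only of low regularity. The organizing principle is that the Bogomolny-Hermite-Einstein equation on $U$ is exactly the $S^1$-invariant reduction of the Hermite-Einstein equation on the Hopf lift $U_4$, with the skew-Hermitian field $\Phi$ supplying the component of $A_4$ along the fiber direction $\xi$.

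For the first bullet, I would first note that since $\Phi$ is skew-Hermitian and $\xi$ is a real $1$-form, the correction $\xi\otimes p^{\ast}\Phi$ is skew-Hermitian-valued, so $A_4=p^{\ast}A-\xi\otimes p^{\ast}\Phi$ is again a unitary connection for $h_4=p^{\ast}h$. A direct computation of the curvature gives
\[
F(A_4)=p^{\ast}F(A)-d\xi\otimes p^{\ast}\Phi+\xi\wedge p^{\ast}\bigl(\nabla_A\Phi\bigr),
\]
the commutator terms combining into $\nabla_A\Phi$. Using the explicit complex structure $J$ — under which $\xi-\sqrt{-1}p^{\ast}\alpha$ and $p^{\ast}d\bar z$ span the $(1,0)$- and $(0,1)$-directions respectively — together with the identity $d\omega=-p^{\ast}(\ast df)$ from Lemma \ref{Lem:prepare of Pauly condition}, I would substitute the equation $F(A)=\ast\nabla_A\Phi+\sqrt{-1}c\,(\pi^{\ast}\omega_{\Sigma})\mathrm{Id}$ into this expression. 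The antisymmetric contributions are arranged precisely so that $F(A_4)^{0,2}=0$; equivalently, one checks directly that $A_4^{0,1}$ coincides with the pulled-back operator $\deebar_{E_4}$ determined by $\deebar_V=\nabla_A^{0,1}-\sqrt{-1}\Phi\cdot\alpha$, so that $(V_4,A_4^{0,1})=(V_4,\deebar_{E_4})$ is holomorphic. The remaining trace term $\sqrt{-1}c\,\pi^{\ast}\omega_{\Sigma}$ is of type $(1,1)$ and, after contracting with $\omega_4$ and using that $g_4=p^{\ast}f\,(p^{\ast}g_U+\xi^2)$ rescales the metric by $p^{\ast}f$, yields $\Lambda_{\omega_4}F(A_4)=\sqrt{-1}\,(c/p^{\ast}f)\,\mathrm{Id}$. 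Since $(g_4,J)$ is only $L^2_{5,\mathrm{loc}}$ and $L^p_{2,\mathrm{loc}}$, all of this is read in the weak sense, the holomorphic charts being supplied by Corollary \ref{Cor:hol. coord. on Hopf lift}.

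For the second bullet I would treat the two implications separately. For the reverse implication, assuming $(V_4,h_4,A_4)$ prolongs as an $S^1$-invariant Hermitian holomorphic bundle over $U_4$, I would decompose $V_4$ near the fixed point $0\in U_4$ into its $S^1$-weight eigenbundles; descending these along the Hopf fibration produces the Hermitian line-bundle splitting $(V,h)|_{B\setminus\{(0,0)\}}=\bigoplus_i F_i$, and an equivariant Chern-class computation (as in the degree count in the proof of Lemma \ref{Lem:pullback of zero locus}) identifies $\deg F_i=\int_{\partial B}c_1(F_i)$ with the weight $k_i$ of the corresponding eigenbundle. Recovering $\Phi$ as the fiber ($\xi$-)component of $A_4$ and using the asymptotics $f=1/2R+o(1)$ from Lemma \ref{Lem:prepare of Pauly condition}, with $R$ the distance to $(0,0)$, yields the leading term $\Phi=\frac{\sqrt{-1}}{2R}\sum_i k_i\,\mathrm{Id}_{F_i}+O(1)$ and the bound $\nabla_A(R\Phi)=O(1)$, so $(0,0)$ is a Dirac-type singularity whose weight equals the $S^1$-weight of $V_4|_0$. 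For the forward implication, assuming $(0,0)$ is Dirac-type of weight $\vec k$, I would run the removable-singularity argument for the holomorphic bundle $V_4$ at the fixed fiber: the prescribed singular profile of $\Phi$ forces the pulled-back frames (built as in Proposition \ref{Prp:the lift of mini-hol. bundle}) to extend $S^1$-equivariantly across $0$ with weight $\vec k$, while the Hermite-Einstein equation from the first bullet controls the remaining regularity.

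The main obstacle is the forward implication in the curved, low-regularity setting: Pauly's extension theorem is proved for the flat Hopf model, whereas here $g_4$ and $J$ are genuinely non-smooth and $0\in U_4$ is an $S^1$-fixed point where $p$ degenerates. The key to overcoming this is that the model is an asymptotically flat perturbation — the estimates $|g_4-2g_{4,\mathrm{Euc}}|=O(r_4)$ and $J_0-J\in L^p_{2,\mathrm{loc}}$ from the preceding lemmas let one regard the problem as a controlled perturbation of the flat case, so that Pauly's analysis, combined with the Hilbert-Taylor regularity of \cite{Ref:Hil-Tay}, applies essentially verbatim; verifying that the weight data is preserved under this perturbation is the delicate point.
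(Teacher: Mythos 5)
First, a point of comparison: the paper itself gives no proof of this theorem. It is imported from Pauly \cite{Ref:Pau} with the remark that it ``remains valid for HE-monopoles'', so the only content the paper actually adds is the factor $c$. Your first bullet --- the curvature expansion of $A_4$, the vanishing of $F(A_4)^{0,2}$ via the Bogomolny--Hermite--Einstein equation, and the conformal factor in $g_4=p^{\ast}f(p^{\ast}g_U+\xi^2)$ turning $c$ into $c/p^{\ast}f$ --- is exactly that routine check, and your reverse implication (weight-eigenbundle decomposition of the extended $V_4$, descent along $p$, recovery of $\Phi$ from the $\xi$-component of $A_4$ together with $f=1/2r_3+o(1)$) is the standard equivariant argument. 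Note, however, that your ``main obstacle'' is misidentified: Pauly's theorem is not confined to the flat Hopf model; it is proved for monopoles on arbitrary Riemannian $3$-manifolds, and the low regularity of $g_4$ and $J$ is already present and handled in his setting --- indeed Lemma \ref{Lem:prepare of Pauly condition} is essentially imported from \cite{Ref:Pau}. The only genuinely new ingredient is $c\neq 0$.

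The genuine gap is in the forward implication, and it is a circularity rather than a missing detail. You propose to extend $V_4$ across $0$ using frames ``built as in Proposition \ref{Prp:the lift of mini-hol. bundle}''. That construction takes as input mini-holomorphic frames $\mb{e}_{\pm}$ on $U_{\pm}$ satisfying $e_{+,i}=z^{k_i}e_{-,i}$, i.e.\ it presupposes that the underlying mini-holomorphic bundle has an \emph{algebraic} Dirac-type singularity in the sense of Definition \ref{Def:alg. Dirac-type sing.}. But the hypothesis available in the forward direction is only the \emph{analytic} Definition \ref{Def:HE-monopole}(ii): a Hermitian line-bundle splitting and the estimates $\Phi=\frac{\sqrt{-1}}{2R}\sum_i k_i\,\mathrm{Id}_{F_i}+O(1)$ and $\nabla_A(R\Phi)=O(1)$. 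The existence of meromorphic scattering frames for a Dirac-type singular monopole is precisely what the paper deduces \emph{from} Theorem \ref{Thm:Pauly's criterion} in the proposition immediately following it, so it cannot be used as an input to the theorem's proof. The argument actually needed here (and supplied by Pauly) is analytic: compare $A_4$ with the pullback of the explicit charge-$k_i$ Dirac monopole connections, which extend over the Hopf fibration by the classical computation; use the two displayed estimates to show that the difference has curvature bounded, hence in $L^2$, with respect to $g_4$ near the origin; and then apply an Uhlenbeck-type removable-singularity theorem to prolong the bundle, metric and connection, equivariance of the construction identifying the $S^1$-weight of $V_4|_0$ with $\vec{k}$. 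None of this appears in your sketch beyond the assertion that ``Pauly's analysis applies essentially verbatim'', so the forward direction as written is not a proof.
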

			\begin{Remark}
				We have $(p^{\ast}f)^{-1}\in L^p_{1,\mathrm{loc}}(U_4)$ for any $p\in[1,\infty)$.
			\end{Remark}
			
			\begin{Prp}
				A HE-monopole $(V,h,A,\Phi)$ on $X$ has a natural mini-holomorphic structure $\deebar_V(v) := \nabla^{0,1}_A(v) -\sqrt{-1}\Phi(v)\alpha$.
			\end{Prp}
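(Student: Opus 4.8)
The plan is to verify directly the two defining properties of a mini-holomorphic structure for the operator $\deebar_V := \nabla^{0,1}_A - \sqrt{-1}\,\Phi\,\alpha$: the Leibniz rule and the integrability $\deebar_V\comp\deebar_V=0$. For the Leibniz rule, the key point is that $\Phi$ is $C^{\infty}$-linear, so the correction term $-\sqrt{-1}\Phi(v)\alpha$ is tensorial and contributes no derivative. First I would note that under the mini-holomorphic decomposition the $(0,1)$-projection of $df$ is exactly $\deebar_X f$, since $df=\partial_{t}(f)\alpha+\widetilde{\partial_{z}}(f)\pi^{\ast}dz+\widetilde{\partial_{\bar z}}(f)\pi^{\ast}d\bar z$ while $\Omega^{0,1}_X$ is spanned by $\alpha$ and $\pi^{\ast}d\bar z$. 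Hence $\nabla^{0,1}_A(fv)=\deebar_X(f)\,v+f\nabla^{0,1}_A(v)$, and adding the tensorial term gives $\deebar_V(fv)=\deebar_X(f)\wedge v+f\deebar_V(v)$; the same identity extends verbatim to $\Omega^{0,i}_X(V)$.

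The substance is the integrability. Expanding $\deebar_V\comp\deebar_V$ as an operator on $\Omega^{0,\bullet}_X(V)$ produces three contributions: $(\nabla^{0,1}_A)^2$, the mixed term, and $(\sqrt{-1})^2(\Phi\alpha\wedge)^2$, the last of which vanishes since $\alpha\wedge\alpha=0$. For the first I would compute $(\nabla^{0,1}_A)^2 v=\alpha\wedge\pi^{\ast}d\bar z\,(\nabla_{\partial_{t}}\nabla_{\widetilde{\partial_{\bar z}}}-\nabla_{\widetilde{\partial_{\bar z}}}\nabla_{\partial_{t}})v$, using $(d\alpha)^{0,2}=0$ and $d(\pi^{\ast}d\bar z)=0$; since $\alpha$ is $\partial_{t}$-invariant we have $\iota_{\partial_{t}}d\alpha=0$, and combined with $d\pi([\partial_{t},\widetilde{\partial_{\bar z}}])=[0,\partial_{\bar z}]=0$ this forces $[\partial_{t},\widetilde{\partial_{\bar z}}]=0$, so $(\nabla^{0,1}_A)^2=F(A)^{0,2}$. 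For the mixed term, applying $\nabla^{0,1}_A$ to $\alpha\wedge\Phi v$ and invoking $(d\alpha)^{0,2}=0$ once more makes the two $\Phi\nabla^{0,1}_A v$ pieces cancel, leaving $\sqrt{-1}\,\alpha\wedge(\nabla^{0,1}_A\Phi)v$. Altogether
\[
	\deebar_V\comp\deebar_V(v)=\bigl(F(A)^{0,2}+\sqrt{-1}\,\alpha\wedge\nabla^{0,1}_A\Phi\bigr)v.
\]
As $\deebar_V\comp\deebar_V$ is tensorial (wedging by this fixed $(0,2)$-form), vanishing on sections will give $\deebar_V\comp\deebar_V=0$ on all of $\Omega^{0,\bullet}_X(V)$.

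To finish I would feed in the Bogomolny-Hermite-Einstein equation. Its $\Omega^{0,2}_X$-component reads $F(A)^{0,2}=(\ast\nabla_A\Phi)^{0,2}$, because $\pi^{\ast}\omega_{\Sigma}$ is of type $(1,1)$ and contributes nothing to $\Omega^{0,2}_X$. It then remains to compute $(\ast\nabla_A\Phi)^{0,2}$. Working in a local orthonormal coframe $e^0=\alpha$, with $e^1,e^2$ spanning $\pi^{\ast}\Omega^1_{\Sigma}$, $\pi^{\ast}d\bar z\propto e^1-\sqrt{-1}e^2$, and volume $e^0\wedge e^1\wedge e^2=\alpha\wedge\pi^{\ast}\omega_{\Sigma}$, I would write $\nabla_A\Phi=\sum_j P_j\,e^j$, apply $\ast$, and project onto $\Omega^{0,2}_X=\underline{\mathbb{C}}\,e^0\wedge(e^1-\sqrt{-1}e^2)$; the outcome is exactly $-\sqrt{-1}\,\alpha\wedge\nabla^{0,1}_A\Phi$, which cancels the second term of the display and yields $\deebar_V\comp\deebar_V=0$.

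I expect the Hodge-star computation of the last paragraph to be the main obstacle: the bookkeeping of which components of the real $3$-dimensional data land in $\Omega^{0,2}_X$ versus the $(1,1)$-part is where a sign or a factor can easily go wrong, and it is precisely here that the geometry enters through the orientation $\alpha\wedge\pi^{\ast}\omega_{\Sigma}>0$ and the metric $g_X=\alpha^2+\pi^{\ast}g_{\Sigma}$. Conceptually this step shows that integrability $\deebar_V\comp\deebar_V=0$ is equivalent to the $\Omega^{0,2}_X$-part of the Bogomolny-Hermite-Einstein equation, which is the natural reason the underlying mini-holomorphic structure exists.
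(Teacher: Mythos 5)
Your proposal is correct and takes essentially the same approach as the paper: the paper's entire proof is ``By a direct calculation,'' and your argument is exactly that calculation carried out in full, with the signs coming out right under the paper's conventions. In particular your identity $\deebar_V\comp\deebar_V=F(A)^{0,2}+\sqrt{-1}\,\alpha\wedge\nabla^{0,1}_A\Phi$ and the cancellation $(\ast\nabla_A\Phi)^{0,2}=-\sqrt{-1}\,\alpha\wedge\nabla^{0,1}_A\Phi$ (using the $(1,1)$-type of $\pi^{\ast}\omega_{\Sigma}$ and the orientation $\mathrm{vol}_X=\alpha\wedge\pi^{\ast}\omega_{\Sigma}$) are the correct content of that direct calculation.
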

			\begin{proof}
				By a direct calculation.
			\end{proof}
			Let $(E,\deebar_E)$ be a mini-holomorphic bundle on $X$,
			and $h$ a Hermitian metric on $E$.
			As an analogue of the Chern connection,
			there uniquely exist a connection $A_h$ and a skew-Hermitian endomorphism $\Phi_h$ on $E$ such that $\deebar_V(v) := \nabla^{0,1}_{A_h}(v) -\sqrt{-1}\Phi_h(v)\alpha$.
			We call $A_h$ and $\Phi_h$ the Charbonneau-Hurtubise (or shortly CH) connection and endomorphism.
			If the tuple $(V,h,A_h\Phi_h)$ is a monopole on $X$,
			we call $h$ a Bogomolny-Hermite-Einstein (or shortly BHE) metric on $(V,\deebar_V)$.
			
			By following \cite{Ref:Cha-Hur},
			we mention the relation between Dirac-type singular monopole and mini-holomorphic bundles.
			We assume that $Y$ is a neighborhood of $0\in\mathbb{C}$ and $X=[-\eps,\eps]_t\times Y$.
			Let $(V,h,A,\Phi)$ be a HE-monopole on $X\setminus\{(0,0)\}$ and $(V,\deebar_V)$ the underlying mini-holomorphic bundle.
			We denote by $\vec{k}\in\mathbb{Z}^r$ the weight of $(V,h,A,\Phi)$ at $(0,0)$.
			Take sections $s_1,s_2$ on $Y$ to be $s_i(z) := ((-1)^{i}\eps,z)$.
			Let $\Psi_{s_1,s_2}:(s_1)^{\ast}V|_{Y\setminus\{0\}}\to (s_2)^{\ast}V|_{Y\setminus\{0\}}$ be the scattering map.
			\begin{Prp}
				The scattering map $\Psi_{s_1,s_2}$ induces a meromorphic isomorphism $(s_1)^{\ast}V(\ast 0)\simeq (s_2)^{\ast}V(\ast 0)$.
				In particular,
				$(0,0)$ is an algebraic Dirac-type singularity of $(V,\deebar_V)$.
				Moreover,
				the weight of the algebraic Dirac-type singularity of $(V,\deebar_V)$ at $(0,0)$ is $\vec{k}$.
			\end{Prp}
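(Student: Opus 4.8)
The plan is to push the whole problem up to the Hopf cover, where Theorem \ref{Thm:Pauly's criterion} already provides a genuine holomorphic bundle, and then descend holomorphic frames back down to mini-holomorphic ones. Since $(0,0)$ is a Dirac-type singularity of the monopole of weight $\vec{k}$, Theorem \ref{Thm:Pauly's criterion} shows that the lift $(V_4,h_4,A_4)=(p^{\ast}V,p^{\ast}h,p^{\ast}A-\xi\otimes p^{\ast}\Phi)$ prolongs to an $S^1$-equivariant holomorphic bundle over $U_4$ whose fibre $V_4|_0$ carries $S^1$-weight $\vec{k}$. Diagonalizing the $S^1$-action on $V_4|_0$, I would choose an $S^1$-homogeneous holomorphic frame $\mb{e}_4=(e_{4,i})$ of $V_4$ near $0$ with $\theta\cdot e_{4,i}=e^{\sqrt{-1}k_i\theta}e_{4,i}$; such a frame exists in the appropriate finite-Sobolev sense used for $(U_4,J,g_4)$.

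The bridge between the two complex structures is the identity $\nabla^{0,1}_{A_4}(p^{\ast}v)=(p^{\ast}\deebar_V v)^{0,1}$, which I would verify by a direct computation using $J\xi=p^{\ast}\alpha$ and $J(p^{\ast}dz)=\sqrt{-1}\,p^{\ast}dz$. Writing the $(0,1)$-forms on $U_4$ as the span of $p^{\ast}d\bar{z}$ and $\xi+\sqrt{-1}\,p^{\ast}\alpha$, the $p^{\ast}dz$ and $\xi-\sqrt{-1}\,p^{\ast}\alpha$ directions drop out of $\nabla_{A_4}(p^{\ast}v)=p^{\ast}(\nabla_A v)-\xi\otimes p^{\ast}(\Phi v)$, and the surviving coefficients are precisely the $\pi^{\ast}d\bar{z}$- and $\alpha$-components of $\deebar_V v=\nabla^{0,1}_A v-\sqrt{-1}\Phi v\cdot\alpha$. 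Consequently an $S^1$-invariant section $p^{\ast}v$ of $V_4$ is holomorphic if and only if $v$ is mini-holomorphic, which is exactly the descent principle I need.

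Using Lemma \ref{Lem:pullback of zero locus}, so that $\mathrm{Zero}(w_1)=p^{\ast}((-\eps,0]\times\{0\})$ and $\mathrm{Zero}(w_2)=p^{\ast}([0,\eps)\times\{0\})$, and hence $p^{-1}(U_+)=U_4\setminus\mathrm{Zero}(w_1)$ and $p^{-1}(U_-)=U_4\setminus\mathrm{Zero}(w_2)$, I would set $p^{\ast}e_{+,i}:=w_1^{-k_i}e_{4,i}$ on $p^{-1}(U_+)$ and $p^{\ast}e_{-,i}:=w_2^{k_i}e_{4,i}$ on $p^{-1}(U_-)$. Each expression is holomorphic (as $w_1$, resp. $w_2$, is nonvanishing there) and $S^1$-invariant (weight $-k_i+k_i=0$), so by the previous paragraph it descends to a mini-holomorphic frame $\mb{e}_{\pm}$ of $V$ on $U_{\pm}$. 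On the overlap $U_+\cap U_-=U\setminus(I\times\{0\})$ the relation $p^{\ast}z=w_1w_2$ from Corollary \ref{Cor:hol. coord. on Hopf lift} yields $p^{\ast}e_{+,i}=(w_1w_2)^{-k_i}p^{\ast}e_{-,i}=p^{\ast}(z^{-k_i}e_{-,i})$, whence $e_{-,i}=z^{k_i}e_{+,i}$.

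Finally I would read off the scattering map. Since $s_1(Y)$ lies in $U_-$ and $s_2(Y)$ in $U_+$, the families $(s_1)^{\ast}\mb{e}_-$ and $(s_2)^{\ast}\mb{e}_+$ are holomorphic frames of $(s_1)^{\ast}V$ and $(s_2)^{\ast}V$. For $z\neq 0$ the transport segment $\{(t,z):t\in[-\eps,\eps]\}$ avoids the axis and stays in $U_-$, so the mini-holomorphic section $e_{-,i}$ is its own parallel transport and $\Psi_{s_1,s_2}((s_1)^{\ast}e_{-,i})=(s_2)^{\ast}e_{-,i}=z^{k_i}(s_2)^{\ast}e_{+,i}$, the last equality by the overlap relation. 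This is manifestly meromorphic in $z$, so $\Psi_{s_1,s_2}$ prolongs to a meromorphic isomorphism $(s_1)^{\ast}V(\ast 0)\simeq(s_2)^{\ast}V(\ast 0)$; comparing with Definition \ref{Def:alg. Dirac-type sing.} identifies $(0,0)$ as an algebraic Dirac-type singularity of weight $(k_i)=\vec{k}$, the equality of weights coming from Theorem \ref{Thm:Pauly's criterion}, which matches the monopole weight with the $S^1$-weight of $V_4|_0$. I expect the compatibility identity of the second paragraph to be the main obstacle: although it is only a few lines of linear algebra in the $J$-eigenbasis, it is the one place where the monopole's connection-plus-endomorphism data must be reconciled with the purely mini-holomorphic $\deebar_V$, and care is needed because $J$ and the coordinates $w_1,w_2$ are only of finite Sobolev regularity near $0$, so the descent of invariant holomorphic frames must be read in that weaker sense.
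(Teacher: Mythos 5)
Your proof is correct, but it follows a genuinely different route from the paper's. The paper argues by comparison with a model: it takes an abstract Dirac-type singular mini-holomorphic bundle $(V_2,\deebar_{V_2})$ on $(U,\{(0,0)\})$ of weight $\vec{k}$, lifts the monopole by Theorem \ref{Thm:Pauly's criterion} and the model by Proposition \ref{Prp:the lift of mini-hol. bundle}, notes that the two prolongations over $U_4$ have equal $S^1$-weights and hence admit an $S^1$-equivariant holomorphic isomorphism $K_4$, and then descends $K_4$ to a mini-holomorphic isomorphism $V\simeq V_2$, from which $V$ inherits the algebraic Dirac-type singularity and its weight. You dispense with the model entirely: you take an $S^1$-homogeneous holomorphic frame of the Pauly prolongation itself, twist by $w_1^{-k_i}$ and $w_2^{k_i}$ to make it invariant, descend to mini-holomorphic frames $\mb{e}_{\pm}$ on $U_{\pm}$ using Lemma \ref{Lem:pullback of zero locus}, and compute the scattering map explicitly as the diagonal matrix $(z^{k_i})$ --- in effect running the construction of Proposition \ref{Prp:the lift of mini-hol. bundle} backwards. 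The trade-off: the paper's proof is shorter, but it silently uses precisely the two facts you make explicit, namely the descent principle (your identity $\nabla^{0,1}_{A_4}(p^{\ast}v)=(p^{\ast}\deebar_V v)^{0,1}$, without which one cannot assert that the descent $K$ of $K_4$ is mini-holomorphic) and an $S^1$-averaging argument (the paper needs it to produce $K_4$ from equality of weights; you need it to produce the homogeneous frame --- the two are equivalent in substance). Your version is longer but self-contained, and it delivers directly the frames demanded by Definition \ref{Def:alg. Dirac-type sing.}. One further point in your favor: your overlap relation $e_{-,i}=z^{k_i}e_{+,i}$ is the one actually forced by the gluing $e_{4,i}=w_1^{k_i}p^{\ast}e_{+,i}=w_2^{-k_i}p^{\ast}e_{-,i}$ together with $p^{\ast}z=w_1w_2$; the relation $e_{+,i}=z^{k_i}e_{-,i}$ as literally stated in Proposition \ref{Prp:the lift of mini-hol. bundle} is inconsistent with that gluing (a sign slip in the paper), so your conventions are the internally coherent ones and they land on the weight $\vec{k}$ exactly as the statement requires.
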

			\begin{proof}
				Let $(V_2,\deebar_2)$ be a Dirac-type singular mini-holomorphic bundle on $(U,\{(0,0)\})$ such that the weight at $(0,0)$ is $\vec{k}$.
				Take a small neighborhood $U_4\subset\mathbb{C}^2$ of $(0,0)\in\mathbb{C}^2$.
				Let $V_{4,1}$ be the holomorphic bundle on $U_4$ obtained by applying Theorem \ref{Thm:Pauly's criterion} to $(V,h,A,\Phi)$.
				Let $V_{4,2}$ be the holomorphic bundle on $U_4$ obtained by applying Proposition \ref{Prp:the lift of mini-hol. bundle} to $(V_2,\deebar_{V_2})$.
				Since the $S^1$-weights of $V_{4,1}$ and $V_{4,2}$ agree with each other,
				there exists an $S^1$-equivariant holomorphic isomorphism $K_4:V_{4,1}\to V_{4,2}$.
				Then the descent $K:V\to V_2$ is a mini-holomorphic isomorphism.
				Therefore the weights of algebraic Dirac-type singularity of $(V,\deebar_V)$ and $(V_2,\deebar_2)$ agrees with each other,
				which is the assertion of the Proposition.
			\end{proof}
	\subsection{A generalization of the Characterization of Dirac-type singularities in \cite{Ref:Moc-Yos}}
		Let $U=I\times W \subset \mathbb{R}_t\times \mathbb{C}_z,\;\alpha\in\Omega^1(U),\;g_{\Sigma},\;g_U$ and $J$ be as in subsubsection \ref{subsubsec:Hopf-Fibration}.
		We denote by $r_i:\mathbb{R}^i\to \mathbb{R}$ the distance function from the origin. 
		Let $(V,h,A,\Phi)$ be a HE-monopole on $U\setminus\{0\}$ of rank $r>0$ of factor $c\in\mathbb{R}$.
		The following proposition is a slight generalization of Theorem 4.5 in \cite{Ref:Moc-Yos}.
		\begin{Prp}\label{Prp:A criterion for Dirac-typeness}
			If the estimate $|\Phi|=O(r_3^{-1})$ is satisfied,
			then $(V,h,A,\Phi)$ is a Dirac-type monopole on $(U,\{(0,0)\})$.
		\end{Prp}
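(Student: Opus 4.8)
The plan is to pass to the Hopf cover and reduce the statement to the extension result already established for the Bogomolny equation (the case $c=0$) in \cite{Ref:Moc-Yos}. Concretely, I would first lift $(V,h,A,\Phi)$ by the Hopf fibration $p:U_4\to U$ of subsubsection \ref{subsubsec:Hopf-Fibration}. By the first bullet of Theorem \ref{Thm:Pauly's criterion}, the tuple $(V_4,h_4,A_4)=(p^{\ast}V,p^{\ast}h,p^{\ast}A-\xi\otimes p^{\ast}\Phi)$ is a Hermitian holomorphic bundle on $U_4\setminus\{0\}$ that is Hermite-Einstein of factor $c/p^{\ast}f$, and by the second bullet the assertion is equivalent to the prolongability of $(V_4,h_4,A_4)$ as an $S^1$-invariant Hermitian holomorphic bundle over $U_4$. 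Thus everything reduces to this prolongation statement.

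Next I would translate the hypothesis into growth bounds near $0\in U_4$. Since in normal coordinates $p$ is the standard Hopf map, one has $r_3\circ p=r_4^2(1+o(1))$, and Lemma \ref{Lem:prepare of Pauly condition} gives $f=1/(2r_3)+o(1)$ and $|\xi|=O(r_4)$. Hence the hypothesis $|\Phi|=O(r_3^{-1})$ yields $|p^{\ast}\Phi|=O(r_4^{-2})$ and $|\xi\otimes p^{\ast}\Phi|=O(r_4^{-1})$, so that $A_4$ itself has an $O(r_4^{-1})$ singularity. A standard elliptic rescaling (Coulomb-gauge) argument on the dyadic annuli $\{r_3\sim\rho\}$, applied to the gauge-fixed Bogomolny-Hermite-Einstein system, then upgrades the pointwise bound on $\Phi$ to $|\nabla_A\Phi|=O(r_3^{-2})$ and $|F(A)|=O(r_3^{-2})$; measured on $U_4$ with $g_4$, and using $|dp|=O(r_4)$, these give $|F(A_4)|_{g_4}=O(r_4^{-2})$. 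These are exactly the bounds on the lifted data used in \cite{Ref:Moc-Yos}.

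The one new feature relative to \cite{Ref:Moc-Yos} is the Hermite-Einstein factor: for the Bogomolny equation it is identically $0$, whereas here it equals $c/p^{\ast}f=2c\,r_4^2(1+o(1))$. The key observation is that this factor is bounded and in fact vanishes at the origin, so the additional term $\sqrt{-1}c\cdot(\pi^{\ast}\omega_{\Sigma})\mathrm{Id}_V$ lifts to a quantity that is $O(1)$ and is therefore lower order compared with the $O(r_4^{-2})$ singular part of the curvature controlled above. Indeed, in the curvature identity $F(A_4)=p^{\ast}F(A)-d\xi\otimes p^{\ast}\Phi+\xi\wedge p^{\ast}(\nabla_A\Phi)$ (using $\xi\wedge\xi=0$), the $c$-contribution enters only through the bounded summand $\sqrt{-1}c\,p^{\ast}\pi^{\ast}\omega_{\Sigma}\mathrm{Id}_V$, so the singular behaviour of $F(A_4)$, hence the growth of the adapted Hermitian metric $h_4$, is unchanged. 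Consequently the extension argument of \cite[Theorem 4.5]{Ref:Moc-Yos} applies verbatim and prolongs $(V_4,h_4,A_4)$ across $0$; the prolongation is automatically $S^1$-invariant since all the data are. Applying Theorem \ref{Thm:Pauly's criterion} once more then shows that $(0,0)$ is a Dirac-type singularity, which is the claim.

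The main obstacle is the penultimate step: one must verify that the prolongation result of \cite{Ref:Moc-Yos} is genuinely insensitive to replacing the zero factor of the Bogomolny case by the bounded factor $c/p^{\ast}f$. This amounts to re-examining which estimates in that proof use the vanishing of the factor rather than merely its boundedness. Since the factor controls only the trace part $\Lambda_{g_4}F(A_4)$, which here stays bounded (indeed $O(r_4^2)$), I expect every step to go through with only notational changes, but checking that no estimate secretly relied on the factor being exactly zero is where the real care is required.
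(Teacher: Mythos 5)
Your outer reduction agrees with the paper's: lift by the Hopf fibration and invoke Theorem \ref{Thm:Pauly's criterion} to reduce everything to prolonging $(V_4,h_4,A_4)$ across the origin. But the core of your argument---the prolongation itself---is deferred to the claim that the extension argument of \cite{Ref:Moc-Yos} ``applies verbatim'' for the nonzero factor, and you yourself flag the verification of this as the remaining hard point. That verification \emph{is} the mathematical content of the proposition (the paper introduces it precisely as a generalization of Theorem 4.5 of \cite{Ref:Moc-Yos} to nonzero factor). The paper supplies it concretely as Lemma \ref{Lem:Removable singularity of HE-metric}: a removable-singularity theorem for Hermite-Einstein metrics whose factor is merely of $L^p$-class, $p>8$ (note $c/p^{\ast}f$ is not smooth at the origin; one only has $(p^{\ast}f)^{-1}\in L^p_{1,\mathrm{loc}}$), proved via the Dirichlet problem for HE metrics with $L^p$ factor (Proposition \ref{Prp:Bdd prblm of HE-metric}, a generalization of Donaldson's result), Simpson's differential inequality \cite[Lemma 3.1]{Ref:Sim}, the removability of the distributional inequality $\tilde{\Delta}(\log\mathrm{Tr}(k)+\log\mathrm{Tr}(k^{-1}))\leq 0$ across the puncture using the $O(\log r_4)$ bound, and the maximum principle. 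None of this appears in your proposal, so as it stands it does not prove the statement.

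There are two further problems with the specific route you sketch. First, the argument of \cite{Ref:Moc-Yos}, as adapted in the paper, does not run on curvature bounds at all: the hypothesis $|\Phi|=O(r_3^{-1})$ is used to control the growth of $\partial_t$-parallel transport, producing mini-holomorphic frames $\mb{e}_{\pm}$ on $U_{\pm}$ with $e_{+,i}=z^{k_i}e_{-,i}$ and $|e_{\pm,i}|_{h}=O(r_3^{-N})$. These frames are what define the extension of the bundle $V_4$ over the origin (Proposition \ref{Prp:the lift of mini-hol. bundle}) and simultaneously give the polynomial mutual bound $|e_{4,i}|_{h_4}=O(r_4^{-N'})$ that Lemma \ref{Lem:Removable singularity of HE-metric} requires. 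Your proposal never extends $V_4$ across $0$ at all; without a frame (or some sheaf-theoretic extension), there is no bundle over $U_4$ to which $h_4$ could prolong, so ``prolongs $(V_4,h_4,A_4)$ across $0$'' is not yet a well-posed claim. Second, your intermediate estimates $|\nabla_A\Phi|,|F(A)|=O(r_3^{-2})$ via a ``standard Coulomb-gauge rescaling'' are themselves nontrivial (Coulomb gauge with estimates presupposes curvature control, which is exactly what is being sought; one would instead argue via subharmonicity of $|\Phi|^2$), and even granted, they would not suffice: $|F(A_4)|_{g_4}=O(r_4^{-2})$ fails to be in $L^2$ in four real dimensions, so it cannot be fed into an Uhlenbeck-type removable-singularity theorem, and it is not the input the frame-growth argument uses.
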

		\begin{proof}
			Take the Hopf-fibration $p:U_4 \to U$, $f:U\setminus\{0\}\to \mathbb{R}$, $\xi\in \Omega^1_{U_4}$ and the holomorphic coordinate $w_1,w_2$ on $(U_4,J)$ as in \ref{subsubsec:Hopf-Fibration}.
			We set $(V_4,h_4,A_4):= (p^{\ast}V,p^{\ast}h,p^{\ast}A + \xi\otimes p^{\ast}\Phi)$.
			Set $U_{+} := U \setminus ((-\eps,0]\times \{0\})$ and $U_{-} := U \setminus ([0,\eps)\times \{0\})$.
			Let $\deebar_V$ be the mini-holomorphic structure of $(V,h,A,\Phi)$.
			By the assumption,
			there exist $\vec{k}=(k_i)$ and mini-holomorphic frames $\mb{e}_{\pm}$ of $(V,\deebar_V)$ on $U_{\pm}$ such that we have $e_{+,i}= z^{k_i}e_{-,i}$ and the estimate $|e_{\pm,i}|_{h} = O(r_3^{-N})$ around the origin for some $N>1$. 
			We take the frame $\mb{e}_{4} =(e_{4,i})$ of $V_4 := p^{\ast}V$ on $U_4 \setminus \{0\}$ and prolong $V_4$ over $U_4$ as in Proposition \ref{Prp:the lift of mini-hol. bundle}.
			Then we obtain the estimate $|e_{4,i}|_{h_4} = O(r_4^{-N'})$ for some $N'>0$.
			we prepare the following lemma.
			\begin{Lem}\label{Lem:Removable singularity of HE-metric}
				Let $D\subset \mathbb{C}^2$ be a relatively compact neighborhood of $0\in\mathbb{C}^2$ with a smooth boundary $\partial D$.
				Let $(E,h_0)$ be a Hermitian holomorphic vector bundle on $\bar{D}=D\cup\partial D$ of rank $r$.
				Let $h$ be a HE-metric of factor $f\in L^p(\bar{U})$ on $E|_{\bar{D}\setminus\{0\}}$ for $p>8$.
				If the estimate $|e|_h = O(r_4^{-N})\cdot|e|_{h_0}$ holds for some positive number $N>0$ and for any local smooth section $e$ of $E$,
				then $h$ can be prolonged as a HE-metric of $E$ over whole $D$.
			\end{Lem}
			By the above Lemma \ref{Lem:Removable singularity of HE-metric},
			$h_4$ is at least of $C^1$-class.
			By Theorem \ref{Thm:Pauly's criterion} (ii),
			$(V,h,A,\Phi)$ is a Dirac-type singular monopole on $(U,\{0\})$.
		\end{proof}
		\begin{proof}[{\upshape \textbf{(The proof of Lemma \ref{Lem:Removable singularity of HE-metric})}}]
			We may assume $f=0$.
			Since the statement is local,
			we also may assume that $E$ is a trivial bundle.
			Moreover,
			we may assume that $h_0$ is a HE-metric and $h_0|_{\partial D}=h|_{\partial D}$ by Proposition \ref{Prp:Bdd prblm of HE-metric}.
			We set the endomorphism $k:= h_0^{-1}h$.
			Then $\log(\mathrm{Tr}(k))$ satisfies $\log(\mathrm{Tr}(k)+ \mathrm{Tr}(k^{-1}))|_{\partial\,U'} = \log(2r)$ and $\tilde{\Delta}(\log\mathrm{Tr}(k)+\log\mathrm{Tr}(k^{-1})) \leq 0$ on $D\setminus\{0\}$ by \cite[Lemma 3.1]{Ref:Sim}.
			We have an estimate $|\log\mathrm{Tr}(k)|,|\log\mathrm{Tr}(k^{-1})| = O(\log(r_4))$,
			and hence we obtain $\tilde{\Delta}(\log\mathrm{Tr}(k)+\log\mathrm{Tr}(k^{-1})) \leq 0$ on $D$ as a distribution.
			Therefore $\mathrm{Tr}(k)+\mathrm{Tr}(k^{-1})=2r$ by the maximum principle.
			Thus we obtain $k=\mathrm{Id}_E$, and particularly $h$ is smooth over $\bar{D}$.
		\end{proof}
    
	\section{The K-H correspondence of Dirac-type singular mini-hol. bundles on compact mini-hol. $3$-folds.}
		\subsection{The flat lift of mini-hol. $3$-folds}
			Let $X$ be a $3$-fold with a mini-holomorphic structure $(\partial_t,\alpha,\Sigma,\pi)$.
			Let $g_{\Sigma}$ be a K\"{a}hler metric on $\Sigma$ and set $g_{X}:= \alpha^2 + \pi^{\ast}g_{\Sigma}$.
			We set $M := S^1_{\theta}\times X$ and $g_M := d\theta^2 + p^{\ast}g_X$,
			where $p:M\to X$ is the projection.
			Let $J$ be an almost holomorphic structure on $M$ such that
			\[
				\left\{
				\begin{array}{l}
					J(d\theta) = -\alpha\\
					J(\pi^{\ast}a) = \pi^{\ast}(J_{\Sigma}(a))\;\;\;(a\in \Omega^1_{\Sigma}),
				\end{array}
				\right.
			\]
			where $J_{\Sigma}$ is the complex structure on $\Sigma$.
			\begin{Prp}\label{Prp:Gauchonness of prod.}
				The almost complex structure $J$ is integrable and the tuple $(M,J,g_M)$ is a Gauduchon manifold.
			\end{Prp}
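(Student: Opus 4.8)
The plan is to write down an explicit coframe of $(1,0)$-forms for $J$, verify integrability through the Newlander--Nirenberg criterion, and then compute $\partial\deebar$ of the fundamental form. Since $M=S^1_\theta\times X$ has complex dimension $2$, the Gauduchon condition $\partial\deebar\,\omega_M^{\,n-1}=0$ reduces to $\partial\deebar\,\omega_M=0$, where $\omega_M$ denotes the fundamental form of $(M,J,g_M)$. From $J(d\theta)=-\alpha$ (whence $J\alpha=d\theta$) and $J(\pi^{\ast}dz)=\pi^{\ast}(J_{\Sigma}dz)=\sqrt{-1}\,\pi^{\ast}dz$ for a holomorphic chart $z$ on $\Sigma$, one checks that $\phi_1:=d\theta+\sqrt{-1}\alpha$ and $\phi_2:=\pi^{\ast}dz$ satisfy $J\phi_i=\sqrt{-1}\phi_i$, so $\{\phi_1,\phi_2\}$ is a local coframe of $\Omega^{1,0}_M$.

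The essential input is that $d\alpha$ is basic and of pure type $(1,1)$. Indeed, Cartan's formula together with the $\partial_{t}$-invariance of $\alpha$ and $\langle\alpha,\partial_{t}\rangle=1$ gives $\iota_{\partial_{t}}d\alpha=\mathcal{L}_{\partial_{t}}\alpha-d\langle\alpha,\partial_{t}\rangle=0$ and $\mathcal{L}_{\partial_{t}}d\alpha=0$, so $d\alpha$ is horizontal and $\partial_{t}$-invariant and therefore descends to a $2$-form on $\Sigma$; as $\Sigma$ is a curve this form is a function multiple of $\omega_{\Sigma}$, so $d\alpha=\sqrt{-1}\,b\,\phi_2\wedge\bar{\phi}_2$ for a real function $b$ pulled back from $\Sigma$. (In the local coordinates of subsection \ref{subsubsec:Hopf-Fibration}, with $\alpha=dt+\alpha_x\,dx+\alpha_y\,dy$ and $\alpha_x,\alpha_y$ independent of $t$, this is just $d\alpha=(\partial_x\alpha_y-\partial_y\alpha_x)\,dx\wedge dy$.) Integrability now follows from Newlander--Nirenberg: $d\phi_2=\pi^{\ast}d(dz)=0$ and $d\phi_1=\sqrt{-1}\,d\alpha$ is of type $(1,1)$, so neither $d\phi_1$ nor $d\phi_2$ acquires a $(0,2)$-component, i.e. $\Omega^{1,0}_M$ generates a differential ideal. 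All the data are smooth, so no regularity subtlety intervenes here.

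For the Gauduchon property I would first read off $\omega_M=d\theta\wedge\alpha+\pi^{\ast}\omega_{\Sigma}$ from $g_M=d\theta^2+\alpha^2+\pi^{\ast}g_{\Sigma}$, using $\tfrac{\sqrt{-1}}{2}\phi_1\wedge\bar{\phi}_1=d\theta\wedge\alpha$. Then $d\omega_M=-d\theta\wedge d\alpha+\pi^{\ast}d\omega_{\Sigma}=-d\theta\wedge d\alpha$, since $\omega_{\Sigma}$ is top-degree on $\Sigma$. Writing $d\theta=\tfrac12(\phi_1+\bar{\phi}_1)$ and inserting $d\alpha=\sqrt{-1}\,b\,\phi_2\wedge\bar{\phi}_2$, the $(1,2)$-part is $\deebar\omega_M=-\tfrac{\sqrt{-1}}{2}\,b\,\bar{\phi}_1\wedge\phi_2\wedge\bar{\phi}_2$. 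Applying $\partial$ and noting that $\partial b$ is a multiple of $\phi_2$ (being pulled back from $\Sigma$), that $\phi_2$ and $\bar{\phi}_2$ are closed, and that $d\bar{\phi}_1=-\sqrt{-1}\,d\alpha$ is again a multiple of $\phi_2\wedge\bar{\phi}_2$, every surviving term contains a repeated factor $\phi_2$ or $\bar{\phi}_2$ and hence vanishes. Thus $\partial\deebar\,\omega_M=0$ and $(M,J,g_M)$ is Gauduchon.

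The computations are elementary; the point that really carries the proof is the observation that $\partial_{t}$-invariance of $\alpha$ forces $d\alpha$ to be basic of pure type $(1,1)$, which at once yields integrability and annihilates $\partial\deebar\,\omega_M$. The only care I expect to need is bidegree bookkeeping and the (orbifold) descent of $d\alpha$ to $\Sigma$.
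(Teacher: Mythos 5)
Your proposal is correct and takes essentially the same approach as the paper's proof: a direct local computation of the fundamental form $\omega_M=d\theta\wedge\alpha+\pi^{\ast}\omega_{\Sigma}$ followed by bidegree bookkeeping to conclude $\partial\deebar\,\omega_M=0$, with integrability checked on a local coframe. You simply supply the details that the paper dismisses as ``an easy calculation,'' chiefly the observation that $\partial_t$-invariance of $\alpha$ and $\langle\alpha,\partial_t\rangle=1$ force $d\alpha$ to be basic of pure type $(1,1)$.
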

			\begin{proof}
				The integrability is trivial from an easy calculation.
				For a local holomorphic coordinate $z=x+\sqrt{-1}y$ on an open subset $W\subset\Sigma_0$,
				there exists a positive function $\lambda$ on $W$ such that $g_{\Sigma} = \lambda(dx^2+dy^2)$.
				Then the fundamental form $\omega_M$ of $(M,J,g_M)$ can be written as $\omega=d\theta\wedge\alpha + \lambda dx\wedge dy$.
				Hence we have $\deebar\partial\omega_M =0$, and hence $(M,J,g_M)$ is a Gauduchon manifold.
			\end{proof}
			Let $(V,\deebar_V)$ be a mini-holomorphic bundle on $X$.
			The pullback $\tilde{V}:=p^{\ast}V$ has a natural holomorphic structure $\deebar_{\tilde{V}}$ determined as follows:
			\[
				\deebar_{\tilde{V}}(p^{\ast}s) = (p^{\ast}\deebar_V(s))^{0,1},
			\]
			where $s$ is a local section of $V$.
			We call $(\tilde{V},\deebar_{\tilde{V}})$ the flat lift of the mini-holomorphic bundle $(V,\deebar_V)$.
			For a Hermitian metric $h$ on $V$,
			the upstairs connection $p^{\ast}A_h + d\theta\otimes p^{\ast}\Phi_h$ is the Chern connection of $p^{\ast}h$.
			Moreover,
			$h$ is a BHE-metric if and only if $p^{\ast}h$ is a HE-metric.
			
			Let $Z\subset X$ be a finite subset.
			As a preparation to prove Theorem \ref{Thm:ex. of BHE-met.},
			we prove that $M'=M\setminus(S^1\times Z)$ satisfies the following assumptions in \cite{Ref:Zhen-Zhen-Zhen}.
			\begin{enumerate}[label=(\Roman*)]
				\item
					The volume of $M'$ is finite.
				\item
					There exists an exhaustion function $f$ of $M'$ such that $|\tilde{\Delta}(f)|<M$ for some $M>0$.
				\item
					There exist $C>0$ and an increasing function $a:[0,\infty)\to[0,\infty)$ with $a(0)=0$ and $a(x)=x$ for $x>1$,
					such that if for a bounded positive function $f$ on $X$ satisfies $\tilde{\Delta}(f)\leq B$ then $\sup{f} \leq C(B)a(||f||_{L^1})$.
					Furthermore,
					if $\tilde{\Delta}(f)\leq 0$, then $\tilde{\Delta}(f)=0$.
			\end{enumerate}
			\begin{Prp}\label{Prp:Assump in Zhen^3}
				If $X$  is compact,
				then the assumptions (I)-(III) in \cite{Ref:Zhen-Zhen-Zhen} holds for $M':=M\setminus(S^1\times Z)$.
			\end{Prp}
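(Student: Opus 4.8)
The plan is to verify conditions (I)--(III) separately, exploiting that $M=S^1_\theta\times X$ is a compact connected Gauduchon manifold by Proposition \ref{Prp:Gauchonness of prod.} and that $S^1\times Z$ is a finite disjoint union of circles $C_j:=S^1\times\{z_j\}$. Condition (I) is immediate: $M$ is compact, and each $C_j$ is a $1$-dimensional submanifold of the $4$-manifold $M$, hence has vanishing top-dimensional measure, so $\mathrm{Vol}(M')=\mathrm{Vol}(M)<\infty$.

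For (II) I would build the exhaustion from single-layer potentials of the circles. Since $M$ is compact Gauduchon, $\tilde{\Delta}$ is elliptic with kernel the constants, and the identity $\int_M\tilde{\Delta}u\,dV_{g_M}=0$ (obtained by integrating $\sqrt{-1}\deebar\partial u\wedge\omega_M$ by parts and using $\deebar\partial\omega_M=0$) shows $\tilde{\Delta}$ has index $0$ with cokernel the constants. Hence there is a Green kernel $G_M(\cdot,p)$ with $\tilde{\Delta}G_M(\cdot,p)=\delta_p-\mathrm{Vol}(M)^{-1}$ and the standard diagonal singularity $G_M(x,p)\sim c\,\mathrm{dist}(x,p)^{-2}$. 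Set $P_j(x):=\int_{C_j}G_M(x,p)\,dp$. Integrating the $\mathrm{dist}^{-2}$ profile along $C_j$ gives $P_j\sim c'\,R_j^{-1}$ (where $R_j$ denotes the distance to $C_j$; note $P_j$ is automatically $S^1_\theta$-invariant and reproduces the $\tfrac{1}{2R_j}$ profile of a Dirac singularity), so $P_j\to+\infty$ along $C_j$, while $\tilde{\Delta}P_j\equiv-\mathrm{length}(C_j)\,\mathrm{Vol}(M)^{-1}$ on $M'$. Choosing cutoffs $\chi_j$ equal to $1$ near $C_j$ and supported in disjoint neighborhoods, I put $f:=\sum_j\chi_jP_j+C_0$. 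Then $f$ is a proper exhaustion of $M'$, and $\tilde{\Delta}f$ is bounded: on $\{\chi_j\equiv1\}$ it equals the constant above, while on the transition regions $d\chi_j$ is supported where $P_j$ is smooth, so the commutator terms are bounded.

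For (III) I would transport the analytic estimates from the compact manifold $M$ to $M'$. Because each $C_j$ has vanishing $\mathcal{H}^{\dim_{\mathbb{R}}M-2}=\mathcal{H}^{2}$-measure, it is removable for bounded sub- and super-solutions of the uniformly elliptic operator $\tilde{\Delta}$, so the Sobolev inequality and heat-kernel bounds valid on compact $M$ persist for bounded functions on $M'$. The first assertion is then Simpson's analytic lemma (cf. \cite{Ref:Sim}) applied to $\tilde{\Delta}$ on $M$: a bounded positive $f$ with $\tilde{\Delta}f\le B$ lies in the subsolution class to which the mean-value inequality applies, giving $\sup f\le C(B)\,a(\|f\|_{L^1})$ with $a$ of the prescribed shape. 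For the second assertion, a bounded $f$ with $\tilde{\Delta}f\le0$ extends across $\bigcup_jC_j$ to $M$, and $\int_M\tilde{\Delta}f\,dV_{g_M}=0$ together with $\tilde{\Delta}f\le0$ forces $\tilde{\Delta}f\equiv0$.

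The main obstacle is (II). A naive choice such as $f=R_j^{-1}$ fails to have bounded $\tilde{\Delta}$: since $(M,g_M,J)$ is only Gauduchon, the non-flatness of $g_M$ and the torsion of $J$ produce uncancelled terms of size $O(R_j^{-2})$ when $\tilde{\Delta}$ hits a fixed singular profile. The single-layer-potential construction circumvents this precisely because $P_j$ is $\tilde{\Delta}$-harmonic off $C_j$ by design, up to the harmless additive constant coming from the Gauduchon normalization of $G_M$. The two points demanding genuine care are the sharp $R_j^{-1}$ asymptotics of $P_j$---read off from the Euclidean model $G_{\mathbb{R}^4}\sim c\,\mathrm{dist}^{-2}$ after integrating along the circle---and the removability of the circles used in (III); both reduce to standard facts once the problem has been transported to the compact Gauduchon manifold $M$.
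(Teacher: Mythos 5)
Your proposal is correct, but for both (II) and (III) it runs along a genuinely different track from the paper's proof. For (II), the paper never solves a global equation upstairs on the $4$-manifold: it observes that for $S^1_\theta$-invariant functions one has $\tilde{\Delta}(p^{\ast}f)=2p^{\ast}\bigl((\Delta+\beta)f\bigr)$ for some vector field $\beta$ on $X$ (exactly the drift term you correctly identify as the reason the naive profile $R_j^{-1}$ fails), then solves $(\Delta+\beta)\rho_q=0$ \emph{locally} on the $3$-fold near each $q\in Z$ with $\rho_q=d(\cdot,q)^{-1}+O(1)$, glues by a partition of unity, and pulls back to $M'$. Your single-layer potential $P_j=\int_{C_j}G_M(\cdot,p)\,dp$, built from the global Green kernel of $\tilde{\Delta}$ on the compact Gauduchon surface $M$, achieves the same end: both constructions manufacture a $(\mathrm{distance})^{-1}$ profile whose image under the relevant operator is bounded. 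The paper's version is more elementary (only local fundamental solutions of a second-order elliptic operator in three variables, with $S^1$-invariance for free), while yours needs the index-zero/cokernel-equals-constants solvability theory for the non-self-adjoint $\tilde{\Delta}$ on a non-K\"ahler surface, but in exchange gives a potential whose $\tilde{\Delta}$ is literally constant off the circles. For (III) the divergence is sharper: the paper does not remove the circles directly. It enlarges $S^1\times Z$ to $S^1\times O(Z)$, where $O(Z)$ is the union of $\partial_t$-orbits through $Z$; by compactness these orbits are the (circle) fibres of $\pi$ through $Z$, so $S^1\times O(Z)$ is a union of $2$-tori which are complex curves in $M$ (fibres of the holomorphic map $M\to\Sigma$). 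Simpson's Proposition 2.2, stated for complements of complex subvarieties, then applies verbatim to $M\setminus(S^1\times O(Z))$, and (III) passes to the intermediate open set $M'$ because the two differ by a null set. Your route --- removability of the $\mathcal{H}^2$-null circles for bounded sub- and supersolutions of $\tilde{\Delta}$, followed by the compact-case estimate on $M$ --- avoids the detour through the orbit tori and the complex-analyticity of the enlarged set, at the cost of having to justify the removable-singularity step for the non-self-adjoint operator yourself rather than quoting Simpson as a black box; your use of the Gauduchon identity $\int_M\tilde{\Delta}u\,dV_{g_M}=0$ to force $\tilde{\Delta}f\equiv 0$ in the second half of (III) is exactly the mechanism that underlies the paper's citation as well.
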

			\begin{proof}
				Obviously $M'$ has finite volume, and hence the assumption (I) holds for $M'$.
				We prove the assumption (II).
				By a direct calculation,
				there exists a vector field $\beta$ on $X$ such that we have $\tilde{\Delta}(p^{\ast}f) = 2p^{\ast}(\Delta(f) + \beta f)$ for any $f\in C^{\infty}(X)$.
				For each $p\in Z$,
				there exists a smooth function $\rho_p: B_{2\eps}(p)\setminus\{p\} \to \mathbb{R}$ that satisfies
				\[
				\left\{
					\begin{array}{l}
						(\Delta+\beta)(\rho_p)=0\\
						\rho_p(x) = d(x,p)^{-1} + O(1).
					\end{array}
				\right.
				\]
				Thus,
				by using a partition of unity,
				we obtain a non-negative smooth function $\rho:X\setminus Z \to \mathbb{R}$ satisfying
				\[
					\left\{
						\begin{array}{ll}
							|(\Delta+\beta)(\rho)|<R & (\exists R>0)\\
							\rho(x) = d(x,p)^{-1} + O(1) & (x\in B_{\eps}(p)).
						\end{array}
					\right.
				\]
				Therefore the pullback $\tilde{\rho}:=p^{\ast}\rho$ is an exhaustion function of $M'$ and $|\tilde{\Delta}(\tilde{\rho})|<R$,
				and this is the assertion of the assumption (II).
				We prove the assumption 3.
				Let $O(Z)$ be the orbit of $Z$ with respect to $\partial_t$-action.
				Since $M$ is compact,
				$S^1\times O(Z)$ is a smooth hypersurface of $M$.
				The assumption (III) holds for $M\setminus (S^1\times O(Z))$ by the same argument in \cite[Proposition 2.2]{Ref:Sim}.
				Therefore $M'$ also satisfies the assumption (III) because of the inclusions $M\setminus (S^1\times O(Z))\subset M'\subset M$.
			\end{proof}
		\subsubsection{The stability condition for mini-hol. bundles on mini-hol. manifolds}
			Let $X$ be a compact connected $3$-fold with a mini-holomorphic structure $(\partial_t,\alpha,\Sigma,\pi)$ and $Z\subset X$ a finite subset.
			Let $g_{\Sigma}$ be a K\"{a}hler metric on $\Sigma$ and set $g_{X}:= \alpha^2 + \pi^{\ast}g_{\Sigma}$.
			We set the orientation of $X$ as $\mathrm{vol}_X = \alpha\wedge\pi^{\ast}\mathrm{vol}_{\Sigma}$.
			Let $U_q\subset X$ be a sufficiently small neighborhood of $q\in Z$.
			Let $U_{q,4}\subset \mathbb{C}^2$ be a neighborhood of $(0,0)\in\mathbb{C}^2$ and $p_{q}:U_{q,4}\to U_{q}$ the Hopf-fibration by identifying $U_q$ with a neighborhood of $0\in\mathbb{R}^3$.
			We set the holomorphic structure on $U_{q,4}$ by Corollary \ref{Cor:hol. coord. on Hopf lift}.
			Let $(V,\deebar_V)$ be a Dirac-type singular mini-holomorphic bundle of rank $r>0$ on $(X,Z)$ such that each $q\in Z$ is of weight $\vec{k}_q=(k_{q,i})\in \mathbb{Z}^r$.
			Let $V_{q,4}$ be the holomorphic bundle on $U_{q,4}$ obtained by applying Proposition \ref{Prp:the lift of mini-hol. bundle} to $(V,\deebar_V)|_{U_q}$.
			\begin{Def}\label{Def:adm. met. on mini-hol.}
				\begin{enumerate}[label=(\roman*)]
					\item
						A smooth Hermitian metric $h$ on $(V,\deebar_V)$ is admissible if for any $q\in Z$ the pullback metric $p_{q}^{\ast}h$ can be prolonged to a Hermitian metric of $V_{q,4}$ of $C^1$-class.
					\item
						We define the degree $\mathrm{deg}(V,\deebar_V)$ to be
						\[
							\mathrm{deg}(V,\deebar_V):=
								\int_{X\setminus Z} \alpha\wedge c_1(A_h),
						\]
						where $h$ is an admissible Hermitian metric on $(V,\deebar_V)$.
					\item 
						We define the slope of $(V,\deebar_V)$ to be $\mu(V,\deebar_V):= \mathrm{deg}(V,\deebar_V)/\mathrm{rank}(V)$.
					\item 
						A mini-holomorphic bundle $(V,\deebar_{V})$ is said to be stable if $\mu(F,\deebar_F)< \mu(V,\deebar_V)$ holds for any proper mini-holomorphic subbundle $(F,\deebar_F)$ of $(V,\deebar_V)$.
						Semistability and polystability of mini-holomorphic bundles are also defined as in the usual case.
				\end{enumerate}
			\end{Def}
			\begin{Remark}
				By Lemma \ref{Lem:rest. of mini-hol. is regular},
				the restriction of an admissible metric $h$ to a mini-holomorphic subbundle $(F,\deebar_F)$ of $(V,\deebar_V)$ is also admissible.
			\end{Remark}
			We show some properties of admissible Hermitian metrics and well-definedness of the degree $\mathrm{deg}(V,\deebar_V)$.
			\begin{Prp}\label{Prp:char. of adm.}
				Any admissible Hermitian metrics on $(V,\deebar_V)$ are mutually bounded.
				Conversely,
				a Hermitian metric $\tilde{h}$ is admissible if the following conditions are satisfied:
				\begin{itemize}
					\item
						The metric $\tilde{h}$ and an admissible metric $h_0$ are mutually bounded. 
					\item
						For any $q\in Z$ there exists a neighborhood $U\subset X$ of $q$ such that the tuple $(V,h,A_h,\Phi_h)|_{U\setminus\{q\}}$ is a monopole on $U\setminus \{q\}$.
				\end{itemize}
			\end{Prp}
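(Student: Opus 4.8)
The plan is to prove the two assertions separately, reducing each to the local model near a singular point $q\in Z$ via the Hopf fibration $p_q:U_{q,4}\to U_q$, where the machinery of Section~2 applies.

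For the first assertion, let $h_1,h_2$ be admissible. Since $X$ is compact and $Z$ is finite, it suffices to establish mutual boundedness on a punctured neighborhood $U_q\setminus\{q\}$ of each $q\in Z$: away from $Z$ both metrics are smooth on the compact set $X\setminus\bigcup_q U_q$, hence mutually bounded there. Fix $q$. By admissibility (Definition~\ref{Def:adm. met. on mini-hol.}) the pullbacks $p_q^{\ast}h_1$ and $p_q^{\ast}h_2$ both extend to $C^1$-metrics on $V_{q,4}$ over the origin, so on a small closed ball $\bar{B}\subset U_{q,4}$ around $0$ they are continuous positive-definite Hermitian forms and therefore mutually bounded on $\bar B$. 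Because $p_q:U_{q,4}\setminus\{0\}\to U_q\setminus\{q\}$ is a surjective submersion and $p_q^{\ast}V=V_4$ identifies fibers isometrically, a comparison $C^{-1}p_q^{\ast}h_1\le p_q^{\ast}h_2\le C\,p_q^{\ast}h_1$ descends fiberwise to $C^{-1}h_1\le h_2\le C\,h_1$ on $U_q\setminus\{q\}$. Combining the local estimates at each $q$ with the estimate away from $Z$ gives mutual boundedness on all of $X\setminus Z$.

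For the converse, fix $q\in Z$ and work on a neighborhood $U=U_q$ on which $(V,\tilde h,A_{\tilde h},\Phi_{\tilde h})$ is a monopole. First I would lift by the Hopf fibration: by Theorem~\ref{Thm:Pauly's criterion} the tuple $(V_4,\tilde h_4,A_4):=(p_q^{\ast}V,\,p_q^{\ast}\tilde h,\,p_q^{\ast}A_{\tilde h}-\xi\otimes p_q^{\ast}\Phi_{\tilde h})$ is a Hermitian holomorphic bundle on $U_{q,4}\setminus\{0\}$ satisfying the Hermite--Einstein condition of factor $c/p_q^{\ast}f$, and its underlying holomorphic structure is that of the prolonged bundle $V_{q,4}$ of Proposition~\ref{Prp:the lift of mini-hol. bundle}. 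Since $(p_q^{\ast}f)^{-1}\in L^p_{1,\mathrm{loc}}$ for every $p\in[1,\infty)$ (the Remark following Theorem~\ref{Thm:Pauly's criterion}), the factor $c/p_q^{\ast}f$ lies in $L^p$ for some $p>8$. To verify the growth hypothesis of Lemma~\ref{Lem:Removable singularity of HE-metric}, I fix a smooth reference metric $h_{\mathrm{ref}}$ on $V_{q,4}$: since $h_0$ is admissible, $p_q^{\ast}h_0$ extends to a $C^1$-metric on $V_{q,4}$ and is thus mutually bounded with $h_{\mathrm{ref}}$ near $0$, and the hypothesis that $\tilde h$ and $h_0$ are mutually bounded downstairs gives, after pullback, that $\tilde h_4$ is mutually bounded with $h_{\mathrm{ref}}$. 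Hence $|e|_{\tilde h_4}=O(1)\,|e|_{h_{\mathrm{ref}}}=O(r_4^{-N})\,|e|_{h_{\mathrm{ref}}}$ for any $N>0$, and Lemma~\ref{Lem:Removable singularity of HE-metric} shows that $\tilde h_4=p_q^{\ast}\tilde h$ prolongs as a smooth (in particular $C^1$) Hermite--Einstein metric of $V_{q,4}$ over the origin. As this holds at every $q\in Z$, the metric $\tilde h$ is admissible by definition.

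The main obstacle is the converse direction, and concretely the bookkeeping needed to invoke Lemma~\ref{Lem:Removable singularity of HE-metric}: one must confirm that the Hopf lift is genuinely a Hermite--Einstein metric with an $L^p$-factor ($p>8$) on the correctly prolonged holomorphic bundle $V_{q,4}$, and that the mutual-boundedness hypothesis supplies the polynomial growth control $|e|_{\tilde h_4}=O(r_4^{-N})$ demanded there. By contrast, the descent of norm comparisons through the Hopf submersion used in the first assertion is routine, since $p_q^{\ast}V$ identifies fibers isometrically.
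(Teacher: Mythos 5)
Your proposal is correct and follows essentially the same route as the paper: the first claim descends mutual boundedness from the $C^1$-prolongations of the Hopf lifts (which the paper dismisses as trivial), and the converse lifts by the Hopf fibration, uses Theorem \ref{Thm:Pauly's criterion} to get the Hermite--Einstein condition with factor $c/p^{\ast}f$, and applies Lemma \ref{Lem:Removable singularity of HE-metric} with the mutual-boundedness hypothesis to prolong $p_q^{\ast}\tilde{h}$ over the origin. Your write-up is in fact more careful than the paper's (the $L^p$ check on the factor, the smooth reference metric, and concluding admissibility directly from the prolongation), but the underlying argument is identical.
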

			\begin{proof}
				By the definition of the admissible metrics,
				the former claim is trivial.
				We prove the Converse.
				Let $p:U_4 \to U$ be the Hopf-fibration by identifying $U$ as a neighborhood of $0\in\mathbb{R}^3$.
				The pullback $p^{\ast}\tilde{h}$ is a Hermite-Einstein metric on $(V_4,\deebar_{V_4})|_{U\setminus\{q\}}$ and $p^{\ast}\tilde{h}$ and $p^{\ast}h_0$ are mutually bounded.
				Therefore $p^{\ast}\tilde{h}$ can be prolonged over $U_4$ by Lemma \ref{Lem:Removable singularity of HE-metric},
				and hence $(V,\tilde{h},A_{\tilde{h}},\Phi_{\tilde{h}})|_{U\setminus\{q\}}$ is a Dirac-type singular monopole on $(U,\{q\})$ by Theorem \ref{Thm:Pauly's criterion}.
			\end{proof}
			\begin{Prp}\label{Prp:well-definedness of degree}
				The degree $\mathrm{deg}(V,\deebar_V)$ is independent of the choice of admissible Hermitian metrics.
			\end{Prp}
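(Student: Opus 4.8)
The plan is to reduce to the determinant line bundle and then transport the whole computation to the flat lift $M=S^1\times X$, where the difference of the two induced Chern forms becomes $\deebar\partial$-exact and the Gauduchon condition of Proposition \ref{Prp:Gauchonness of prod.} can be exploited. First I would reduce to the case $\mathrm{rank}(V)=1$. Since $c_1(A_h)=\tfrac{\sqrt{-1}}{2\pi}\mathrm{Tr}(F(A_h))$ depends only on the trace of the curvature, the CH connection $A_{\det h}$ of the induced metric $\det h$ on the Dirac-type singular mini-holomorphic line bundle $(\det V,\deebar_{\det V})$ satisfies $F(A_{\det h})=\mathrm{Tr}(F(A_h))$, and $\det h$ is again admissible; hence $\mathrm{deg}(V,\deebar_V)=\mathrm{deg}(\det V,\deebar_{\det V})$ and it suffices to treat a line bundle. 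So let $h_0,h_1$ be two admissible metrics on the line bundle $(V,\deebar_V)$ and set $u:=\log(h_1/h_0)$. By Proposition \ref{Prp:char. of adm.} any two admissible metrics are mutually bounded, so $u\in L^{\infty}(X\setminus Z)$; likewise the two endomorphisms satisfy $\Phi_{h_1}-\Phi_{h_0}=O(1)$ near $Z$, because both carry the common Dirac-type singular part $\tfrac{\sqrt{-1}}{2R}k$ determined by the weight $k$ of $(V,\deebar_V)$.

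Next I would pass to the flat lift $p:M=S^1_\theta\times X\to X$. The pullbacks $p^{\ast}h_i$ are genuine Hermitian metrics on the holomorphic line bundle $(\tilde V,\deebar_{\tilde V})$ over $M':=M\setminus(S^1\times Z)$, with Chern connection $p^{\ast}A_{h_i}+d\theta\otimes p^{\ast}\Phi_{h_i}$. A direct curvature computation gives
\[
	c_1(p^{\ast}h_1)-c_1(p^{\ast}h_0)=p^{\ast}\bigl(c_1(A_{h_1})-c_1(A_{h_0})\bigr)+\tfrac{\sqrt{-1}}{2\pi}\,d\theta\wedge p^{\ast}d(\Phi_{h_1}-\Phi_{h_0}),
\]
while, since the metrics differ by $e^{u}$, the same difference equals $\tfrac{\sqrt{-1}}{2\pi}\deebar\partial(p^{\ast}u)$. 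I would then wedge with the fundamental form $\omega_M=d\theta\wedge\alpha+\pi^{\ast}\omega_{\Sigma}$ and integrate the resulting $4$-form over $M'$, evaluating $\int_{M'}\bigl(c_1(p^{\ast}h_1)-c_1(p^{\ast}h_0)\bigr)\wedge\omega_M$ in two ways. Via the first expression, the term against $\pi^{\ast}\omega_\Sigma$ is pulled back from the $3$-fold $X$ and hence vanishes, the term $d\theta\wedge d\theta$ vanishes, and the term against $d\theta\wedge\alpha$ contributes $2\pi\int_{X\setminus Z}\alpha\wedge(c_1(A_{h_1})-c_1(A_{h_0}))$, i.e. $2\pi(\mathrm{deg}_{h_1}-\mathrm{deg}_{h_0})$; the remaining cross term is $\sqrt{-1}\int_{X\setminus Z}d\bigl((\Phi_{h_1}-\Phi_{h_0})\,\pi^{\ast}\omega_{\Sigma}\bigr)$, which vanishes by Stokes because $d\pi^{\ast}\omega_\Sigma=0$ and its boundary integral over $\partial B_{\eps}(q)$ is $O(1)\cdot O(\eps^2)\to 0$, using $\Phi_{h_1}-\Phi_{h_0}=O(1)$.

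It then remains to show that the second expression integrates to zero, i.e.
\[
	\int_{M'}\deebar\partial(p^{\ast}u)\wedge\omega_M=0,
\]
for together with the previous paragraph this forces $\mathrm{deg}_{h_1}=\mathrm{deg}_{h_0}$. Since $(M,J,g_M)$ is Gauduchon (Proposition \ref{Prp:Gauchonness of prod.}), $\deebar\partial\omega_M=0$, so formally $\int_{M'}\deebar\partial(p^{\ast}u)\wedge\omega_M=\int_{M'}p^{\ast}u\,\deebar\partial\omega_M=0$; equivalently, using $\deebar\partial(p^{\ast}u)\wedge\omega_M=\tfrac{1}{\sqrt{-1}}\tilde\Delta(p^{\ast}u)\,\mathrm{vol}_M$, the claim reduces to $\int_{M'}\tilde\Delta(p^{\ast}u)\,\mathrm{vol}_M=0$ for the bounded function $p^{\ast}u$.

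I expect this integration by parts to be the main obstacle. The function $u$ is only bounded and its gradient may blow up along $S^1\times Z$, so Stokes must be justified on the open manifold $M'$ rather than invoked on a compact space. This is exactly where the structure established in Proposition \ref{Prp:Assump in Zhen^3} enters: the finite volume (I), the exhaustion function $\tilde\rho$ with bounded complex Laplacian (II), and the maximum-principle estimate (III) allow one to insert a cutoff adapted to $\tilde\rho$ and to control the cross and boundary terms so that the integral of $\tilde\Delta(p^{\ast}u)$ over $M'$ vanishes for bounded $p^{\ast}u$. Carrying out this cutoff estimate carefully — showing the error terms $\int_{M'}\partial\chi_{\eps}\wedge\deebar(p^{\ast}u)\wedge\omega_M$ and their conjugates tend to $0$ — is the technical heart of the argument, and it is the step I would spend the most care on.
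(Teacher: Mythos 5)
Your overall skeleton (reduce to the determinant line bundle, lift to $M=S^1\times X$, exploit the Gauduchon identity, and run Stokes on the complement of $\eps$-tubes) is the same as the paper's, but the proposal has a genuine gap at exactly the two places where a quantitative estimate near $Z$ is required, and in both places you reach for the wrong tool. First, your justification of $\Phi_{h_1}-\Phi_{h_0}=O(1)$ assumes that both CH pairs $(A_{h_i},\Phi_{h_i})$ have the Dirac-type monopole asymptotics $\Phi=\tfrac{\sqrt{-1}}{2R}k+O(1)$. But an admissible metric is \emph{not} assumed to be a monopole metric near $q\in Z$: Definition \ref{Def:adm. met. on mini-hol.} only requires that $p_q^{\ast}h$ extend to a $C^1$-metric on $V_{q,4}$, and producing a metric with monopole behaviour is the content of Theorem \ref{Thm:ex. of BHE-met.}, not a property you may invoke here. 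Second, and more seriously, the step you yourself call the ``technical heart'' --- the vanishing of $\int_{M'}\deebar\partial(p^{\ast}u)\wedge\omega_M$ --- is left unproven, and the mechanism you propose cannot prove it: assumptions (I)--(III) of Proposition \ref{Prp:Assump in Zhen^3} (finite volume, an exhaustion with bounded $\tilde{\Delta}$, a maximum principle for bounded functions with $\tilde{\Delta}\leq 0$) give no control whatsoever on $\deebar(p^{\ast}u)$ near the circles $S^1\times Z$, which is precisely what your cutoff error terms $\int\partial\chi_{\eps}\wedge\deebar(p^{\ast}u)\wedge\omega_M$ require; boundedness of $u$ does not bound $du$, and $\tilde{\Delta}(p^{\ast}u)$ has no sign.

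The missing idea --- and it is the key point of the paper's proof --- is that admissibility itself supplies the estimate. Since $p_q^{\ast}h_1$ and $p_q^{\ast}h_0$ extend as $C^1$-metrics on $V_{q,4}$, their Chern connections $A_{4,i}=p_q^{\ast}A_{h_i}+\xi\otimes p_q^{\ast}\Phi_{h_i}$ are $C^0$ up to the puncture, so $A_{4,1}-A_{4,0}=O(1)$; descending along the Hopf map (using $\xi(\partial_\theta)=1/p_q^{\ast}f\sim 2r_3$) yields the paper's estimate
\[
	|A_{h_1}-A_{h_0}|,\ |\Phi_{h_1}-\Phi_{h_0}| \;=\; O\bigl(\mathrm{dist}(\cdot,q)^{-1}\bigr),
\]
and consequently $\partial(p^{\ast}u)=\mathrm{Tr}\bigl(A^{1,0}_{p^{\ast}h_1}-A^{1,0}_{p^{\ast}h_0}\bigr)$ is also $O(\mathrm{dist}^{-1})$. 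With this in hand no cutoff machinery is needed: by type reasons and $\deebar\partial\omega_M=0$ one has $\deebar\partial(p^{\ast}u)\wedge\omega_M=d\{\partial(p^{\ast}u\,\omega_M)-p^{\ast}u(\partial\omega_M-\deebar\omega_M)\}$, so integrating over $M\setminus(S^1\times B_{\eps}(Z))$ and applying Stokes produces boundary integrands of size $O(\eps^{-1})$ over boundaries of volume $O(\eps^2)$, i.e.\ a contribution $O(\eps)\to 0$; the same estimate (weaker than your claimed $O(1)$, but sufficient) also kills your cross term involving $\Phi_{h_1}-\Phi_{h_0}$. So your plan is repairable, but the repair is exactly the estimate you omitted, and without it the argument does not close.
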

			\begin{proof}
				Let $h_1$ and $h_2$ be admissible Hermitian metrics on $(V,\deebar_V)$ and $(A_i,\Phi_i)$ the CH connections and endomorphisms for $i=1,2$.
				Fix $q\in Z$ and take a neighborhood $U\subset X$ of $q$.
				As the proof of the last proposition,
				we take the Hopf fibration $p:U_4\to U$ and the holomorphic bundle $(V_4,\deebar_{V_4})$ on $U_4$.
				Then $p^{\ast}h_i$ are the Hermitian metrics on $V_4$ and the upstairs connections $A_{4,i} = p^{\ast}A_i + \xi\otimes p^{\ast}\Phi_i$ are the Chern connections of $p^{\ast}h_i$ respectively.
				Since $A_{4,i}$ are at least $C^0$-connections on $V_4$, we have $A_{4,1}- A_{4,2} = O(1)$,
				and hence we obtain an estimate
				\begin{align}
					|A_{1}- A_{2}|,|\Phi_1-\Phi_2| = O(\mathrm{dist}(\cdot,q)^{-1}).
				\end{align}
				Set $M:=S^1_{\theta}\times X$.
				Let $p_M:M\to X$ be the projection and $\omega_M$ the fundamental form of $M$.
				Set $B_{\eps}(Z):= \coprod_{p\in Z} B_{\eps}(p)$ for $\eps>0$.
				Then by using the flat lift $(\tilde{V},\deebar_{\tilde{V}})$ of $(V,\deebar_{V})$ we can wright
				\begin{align*}
					 &\int_{X\setminus Z} \alpha\wedge\left(\mathrm{Tr}(F(A_{h_1}))-\mathrm{Tr}(F(A_{h_2}))\right)\\
					=&(2\pi)^{-1}\int_{M\setminus (S^1\times Z)} \omega_M\wedge \mathrm{Tr}\left(F(A_{p_{M}^{\ast}h_1})-F(A_{p_{M}^{\ast}h_2})\right)\\
					=&(2\pi)^{-1}\lim_{\eps\to+0}\int_{M\setminus (S^1\times B_{\eps}(Z))} \omega_M\wedge \mathrm{Tr}\left(F(A_{p_{M}^{\ast}h_1})-F(A_{p_{M}^{\ast}h_2})\right).
				\end{align*}
				Set $\eta := \mathrm{det}(h_1h_2^{-1})$.
				Then we have
				\begin{align*}
					&(2\pi)^{-1}\int_{M\setminus (S^1\times B_{\eps}(Z))} \omega_M\wedge \mathrm{Tr}\left(F(A_{p_{M}^{\ast}h_1})-F(A_{p_{M}^{\ast}h_2})\right)\\
					=&(2\pi)^{-1}\int_{M\setminus (S^1\times B_{\eps}(Z))} \omega_M\wedge \deebar\partial p_{M}^{\ast}\eta\\
					=&(2\pi)^{-1}\int_{M\setminus (S^1\times B_{\eps}(Z))} d\{\partial(p_{M}^{\ast}\eta\cdot\omega_M)-p_{M}^{\ast}\eta(\partial\omega_M-\cdot\deebar\omega_M)\}\\
					=&(2\pi)^{-1}\int_{S^1\times\partial B_{\eps}(Z)}\partial(p_{M}^{\ast}\eta\cdot\omega_M)-p_{M}^{\ast}\eta(\partial\omega_M-\deebar\omega_M)\\
					=&(2\pi)^{-1}\int_{S^1\times\partial B_{\eps}(Z)} \{\mathrm{Tr}(A_{p_{M}^{\ast}h_1}-A_{p_{M}^{\ast}h_2})\wedge\omega_M + p_{M}^{\ast}\eta\cdot\partial\omega_M\} + O(\eps^2)\\
					=&\;O(\eps)\;\;(\because (1)).
				\end{align*}
				Taking the limit $\eps\to+0$ we obtain
				\[
					\int_{X\setminus Z} \alpha\wedge\mathrm{Tr}(F(A_{h_1})) =\int_{X\setminus Z}\alpha\wedge\mathrm{Tr}(F(A_{h_2})),
				\]
				which proves the uniqueness.
			\end{proof}
			Polystability of the underlying mini-holomorphic bundle of a Dirac-type HE-monopole easily follows from the Gauss-Codazzi formula as in the ordinary Kobayashi-Hitchin correspondence.
			We prove the converse.
			\begin{Thm}\label{Thm:ex. of BHE-met.}
				If $(V,\deebar_V)$ is stable,
				then there exists an admissible BHE-metric $h$ on $(V,\deebar_V)$.
			\end{Thm}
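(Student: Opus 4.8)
The plan is to reduce the existence of an admissible BHE-metric on $(V,\deebar_V)$ to the existence of a Hermite-Einstein metric on the flat lift $(\tilde V,\deebar_{\tilde V})$ over the punctured Gauduchon manifold $M'=M\setminus(S^1\times Z)$, and then to apply the Donaldson-type/Uhlenbeck--Yau existence machinery established in \cite{Ref:Zhen-Zhen-Zhen}. The two dictionaries set up in the preliminaries make this natural: by Proposition \ref{Prp:Gauchonness of prod.} the product $M=S^1_\theta\times X$ is a Gauduchon manifold, and a metric $h$ on $V$ is a BHE-metric if and only if $p^{\ast}h$ is a Hermite-Einstein metric on $\tilde V$. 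Moreover Proposition \ref{Prp:Assump in Zhen^3} verifies that $M'$ satisfies precisely the analytic hypotheses (I)--(III) required by the existence theorem of \cite{Ref:Zhen-Zhen-Zhen}. So the core of the argument is a translation of the stability of $(V,\deebar_V)$ into the stability of $(\tilde V,\deebar_{\tilde V})$ in the sense appropriate for that reference, solving the HE-equation upstairs, and then checking that the resulting metric descends to an \emph{admissible} BHE-metric.

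First I would fix the correct HE-factor: the appropriate constant $c$ is determined by $\mathrm{deg}(V,\deebar_V)$, $\mathrm{rank}(V)$, and the total volume, so that $\int_{M'}$ of the trace of the curvature matches the topological degree computed via Proposition \ref{Prp:well-definedness of degree}. Next I would establish the stability comparison. Given any coherent/saturated $\deebar_{\tilde V}$-subsheaf $\tilde F\subset \tilde V$ realized analytically on $M'$, the $S^1$-invariance of everything in sight (the metric $g_M$, the form $\omega_M$, and the holomorphic structure) lets one average over the $S^1_\theta$-action, so the destabilizing subsheaves that matter are $\theta$-invariant and hence descend to mini-holomorphic subbundles $(F,\deebar_F)$ of $(V,\deebar_V)$ on $X\setminus Z$. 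The degree of $\tilde F$ computed with respect to $\omega_M$ equals (up to the fixed normalization $2\pi$ from the $S^1$-fiber) the mini-holomorphic degree $\mathrm{deg}(F,\deebar_F)$, by the same Stokes/boundary computation used in the proof of Proposition \ref{Prp:well-definedness of degree}. Therefore $\mu(V,\deebar_V)>\mu(F,\deebar_F)$ for all proper mini-holomorphic subbundles translates into slope-stability of $\tilde V$ on $M'$, which is the input hypothesis of the existence theorem. Applying \cite{Ref:Zhen-Zhen-Zhen} then yields a Hermite-Einstein metric $H$ on $\tilde V$ over $M'$, mutually bounded with the pullback of a fixed admissible background metric.

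The final step is descent and admissibility. By the $S^1$-invariance of the problem and the uniqueness built into the existence theorem (or by averaging $H$ over $S^1_\theta$), one may take $H$ to be $S^1$-invariant, hence $H=p^{\ast}h$ for a metric $h$ on $V$; by the BHE$\Leftrightarrow$HE dictionary of Proposition \ref{Prp:Gauchonness of prod.}, $h$ is a BHE-metric on $X\setminus Z$. It remains to verify that $h$ is \emph{admissible} in the sense of Definition \ref{Def:adm. met. on mini-hol.}, i.e.\ that near each $q\in Z$ the Hopf-lift $p_q^{\ast}h$ prolongs to a $C^1$-metric on $V_{q,4}$. Here I would invoke Proposition \ref{Prp:char. of adm.}: $h$ is mutually bounded with the admissible background metric (from the theorem's conclusion), and near each $q$ the tuple $(V,h,A_h,\Phi_h)$ is a monopole, so the removable-singularity statement of Lemma \ref{Lem:Removable singularity of HE-metric} applies to the Hopf-lifted HE-metric and forces the required $C^1$-prolongation. \textbf{The main obstacle} I anticipate is the stability comparison: one must show that \emph{all} analytically destabilizing subsheaves of $\tilde V$ on the noncompact $M'$ can be reduced (via $S^1$-averaging and regularity across the removed loci $S^1\times Z$) to genuine mini-holomorphic subbundles of $(V,\deebar_V)$, so that the combinatorics of the Dirac-type weights $\vec k_q$ do not produce spurious lower-slope subsheaves near the singularities — this is where the finiteness of $Z$, the weight bookkeeping from Lemma \ref{Lem:rest. of mini-hol. is regular}, and the boundary-term estimates in the degree computation all have to be reconciled.
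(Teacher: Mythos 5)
Your overall skeleton --- flat lift to the Gauduchon manifold $M=S^1_\theta\times X$ (Proposition \ref{Prp:Gauchonness of prod.}), solve the Hermite--Einstein equation on $\tilde V$ over $M'=M\setminus(S^1\times Z)$ via \cite{Ref:Zhen-Zhen-Zhen} using Proposition \ref{Prp:Assump in Zhen^3}, descend, and conclude admissibility via Proposition \ref{Prp:char. of adm.} --- is exactly the paper's. But the step you yourself flag as the main obstacle is a genuine gap, and the way you propose to close it would fail. You claim that stability of $(V,\deebar_V)$ yields \emph{full} slope-stability of $\tilde V$ on $M'$, because an arbitrary destabilizing saturated subsheaf can be "averaged over the $S^1_\theta$-action" and thereby made invariant. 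There is no such averaging: subsheaves do not form a convex set, and the $S^1$-orbit of a saturated subsheaf is not a subsheaf. The standard substitute on compact K\"ahler manifolds --- the maximal destabilizing subsheaf is canonical, hence invariant under a connected group action --- is also unavailable here, since $M'$ is noncompact and merely Gauduchon, and no Harder--Narasimhan theory is established for it. So the "input hypothesis of the existence theorem," as you state it, is never verified.

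The paper never verifies it either; instead it changes what needs to be verified. It proves the slope inequality only for proper saturated \emph{$S^1$-invariant} subsheaves $\mathcal F\subset\tilde V$: such an $\mathcal F$ is an $S^1$-invariant holomorphic subbundle, hence the flat lift $\tilde F$ of a mini-holomorphic subbundle $(F,\deebar_F)$ of $(V,\deebar_V)$, and the identity $\mathrm{deg}(E,\deebar_E)=(2\pi)^{-1}\mathrm{deg}(\tilde E,p^{\ast}h_E)$ transfers the assumed inequality upstairs. It then invokes \cite[Theorem 1.1]{Ref:Zhen-Zhen-Zhen} in \emph{equivariant} form: as the remark following the proof notes, the proof of that theorem remains valid when the bundle carries a $G$-action and the slope inequality is assumed only for $G$-invariant saturated subsheaves, because the destabilizing object produced by the argument, run equivariantly, is itself $G$-invariant. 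This maneuver simultaneously removes any need for full stability of $\tilde V$ and produces an $S^1$-invariant Hermite--Einstein metric outright, making descent immediate --- whereas your fallback of "averaging $H$ over $S^1_\theta$" is also shaky, since an average of Hermite--Einstein metrics need not be Hermite--Einstein (your alternative via uniqueness is the workable one). Your final step (mutual boundedness plus the local monopole property, then Proposition \ref{Prp:char. of adm.} and Lemma \ref{Lem:Removable singularity of HE-metric}) does match the paper. In short: right scaffolding, correct diagnosis of where the difficulty lies, but the missing idea is the equivariant reformulation of the existence theorem, not a reduction of arbitrary subsheaves to invariant ones.
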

			\begin{proof}
				We take an admissible metric $h_0$ on $(V,\deebar_V)$.
				For a Dirac type singular mini-holomorphic bundle $(E,\deebar_E)$ on $(X,Z)$ and an admissible Hermitian metric $h_E$ on $E$,
				we have 
				\[
					\mathrm{deg}(E,\deebar_E)=
					(2\pi)^{-1}\int_{M\setminus(S^1\times Z)} \omega_M\wedge c_1(A_{p^{\ast}h_E}) = (2\pi)^{-1}\mathrm{deg}(\tilde{E},p^{\ast}h_E),
				\]
				where $(\tilde{E},\deebar_{\tilde{E}})$ is the flat lift of $(E,\deebar_E)$.
				Therefore the slope inequality $\mu(\tilde{V},p^{\ast}h_0) > \mu(\tilde{F},p^{\ast}(h_0|_{F}))$ holds for any proper mini-holomorphic subbundle $F$ of $V$.
				Since an $S^1$-invariant saturated subsheaf $\mathcal{F}$ of $\tilde{V}$ is an $S^1$-invariant holomorphic subbundle,
				there exists a mini-holomorphic subbundle $(F,\deebar_F)$ such that it satisfies $\mathcal{F}=\tilde{F}$,
				where $(\tilde{F},\deebar_{\tilde{F}})$ is the flat lift of $(F,\deebar_F)$.
				Hence the slope inequality $\mu(\tilde{V},p^{\ast}h_0) > \mu(\mathcal{F},p^{\ast}h_0|_{\mathcal{F}})$ also holds for any proper saturated $S^1$-invariant subsheaf $\mathcal{F}$.
				By \cite[Theorem 1.1]{Ref:Zhen-Zhen-Zhen} and Proposition \ref{Prp:Assump in Zhen^3},
				there exists a HE-metric $\tilde{h}$ of $\tilde{V}$ such that $\tilde{h}$ and $p^{\ast}h_0$ is mutually bounded.
				Let $h$ be the descent of $\tilde{h}$.
				Then $h$ is BHE-metric and mutually bounded to $h_0$.
				Therefore $h$ is an admissible BHE-metric by Proposition \ref{Prp:char. of adm.},
				which proves the theorem.
			\end{proof}
			\begin{Remark}
				Indeed group actions are not considered in \cite[Theorem 1.1]{Ref:Zhen-Zhen-Zhen},
				however the proof of Theorem 1.1 remains valid for the case that $V$ has an action by a group $G$ and satisfies the slope inequality for any $G$-invariant saturated subsheaves.
			\end{Remark}

\end{document}